\documentclass[11pt]{amsart}
\usepackage{amsmath, amsthm, amssymb,amscd}
\usepackage{float}
\usepackage{graphicx}
\usepackage{xypic}
\usepackage[arrow, matrix, curve]{xy}
\usepackage{color}
\usepackage{enumitem}  
\usepackage{tikz-cd} 
\usepackage{url}

\usepackage{blindtext}

%
\usepackage[english]{babel}
\setlength{\topmargin}{0.0in}
\setlength{\textheight}{9.0in}
 \setlength{\evensidemargin}{0.0in}
\setlength{\oddsidemargin}{0.0in}
\setlength{\textwidth}{6.3in}

\newcommand{\seq}{\subseteq}
\newcommand{\C}{\mathbb{C}}

\newtheorem{thm}{Theorem}[section]
\newtheorem*{thm-nl}{Theorem}
\newtheorem*{prop-nl}{Proposition}

\newtheorem{lem}[thm]{Lemma}

\def\PP{{\textbf P}}

\def\K{\mathcal{K}}

\def\Pic0{{\rm Pic}^0(X)}

\newtheorem{cor}[thm]{Corollary}
\newtheorem*{cor-nl}{Corollary}
\newtheorem{conjecture}[thm]{Conjecture}
\newtheorem*{conjecture-nl}{Conjecture}
\newtheorem{defin}[thm]{Definition}
\newtheorem*{quest-nl}{Question}
\newtheorem*{quests-nl}{Questions}

\newtheorem{prop}[thm]{Proposition}

\theoremstyle{remark}

\bibliographystyle{alpha}

\title{{Projecting Syzygies of Curves}}
\date{\today}

\author[M. Kemeny]{Michael Kemeny}

\address{University of Wisconsin-Madison, Department of Mathematics, 480 Lincoln Dr
\hfill \newline\texttt{}
 \indent WI 53706, USA} \email{{\tt michael.kemeny@gmail.com}}

\begin{document}
\begin{abstract}
We explore the concept of projections of syzygies and prove two new technical results; we firstly give a precise characterization of syzygy schemes in terms of their projections, secondly, we prove a converse to Aprodu's Projection Theorem. Applying these results, we prove a conjecture of \cite{lin-syz} stating that extremal syzygies of general curves of non-maximal gonality embedded by a linear system of sufficiently high degree arise from scrolls. Lastly, we prove Green's Conjecture for general covers of elliptic curves (of arbitrary degree) as well as proving a new result for curves of even genus and maximal gonality.

\end{abstract}
\maketitle
\setcounter{section}{-1}
\section{Introduction}
Let $X$ be a projective variety and $L$ a line bundle, assumed to be very ample for simplicity. One of the most fundamental objects in algebraic geometry is the section ring $$\Gamma_X(L):=\bigoplus_{n \in \mathbb{Z}} H^0(X,L^{\otimes n}).$$ In order to understand the structure of $\Gamma_X(L)$, one treats it as a $\text{Sym} \left(H^0(X,L) \right)$ module and takes the minimal free resolution. The \emph{syzygy spaces} $K_{i,j}(X,L)$ are then the graded pieces which appear in the resolution. Their dimensions give important invariants $b_{i,j}(X,L)$ of the polarized variety $(X,L)$, known as \emph{Betti numbers}.\medskip

Our understanding of the syzygies of $(X,L)$ is highly limited. At present, our knowledge is most complete in the case where $\dim X=1$, i.e.\ $X$ is an algebraic curve $C$. Most of the known results are \emph{vanishing theorems}, providing conditions for the vanishing of Betti numbers $b_{i,j}(C,L)$. Some highlights include Voisin's theorem on Generic Green's Conjecture, which concerns the case $L=\omega_C$, \cite{V1}, \cite{V2}, as well as the recent proofs of the Gonality Conjecture \cite{ein-lazarsfeld} and the Generic Secant Conjecture \cite{generic-secant}, which are concerned with the case of more general line bundles $L$. Whilst the Gonality Conjecture is known for \emph{arbitrary} curves, the full statement for Green's Conjecture and the Secant Conjecture are still very much open in the non-generic case.

\medskip

More recently, work has been done on trying to go beyond vanishing theorems and describe certain \emph{values} of $b_{i,j}(C,L)$ in terms of geometry. For example, the first linear Betti number $b_{1,1}(C,L)$ describes the number of quadrics required to generate the ideal of $I_C$. It is not unreasonable to hope that other Betti numbers likewise carry geometric information. In \cite{lin-syz} and \cite{betti-multiple} attention was focused on the last of the nonzero, linear Betti numbers $b_{g-k,1}(C,\omega_C)$ for canonical curves of non-maximal gonality $k$. It was found that this extremal Betti numbers counts a classically studied invariant, namely the number of \emph{minimal pencils}, i.e.\ maps $f: C \to \PP^1$ of degree equal to the gonality $k$ (up to genericity hypotheses).\medskip

 The number of minimal pencils on a general $k$-gonal curve was first showed to be finite by Segre, \cite{segre}. More recently Arbarello--Cornalba and Coppens have obtained several important results on the loci of curves carrying multiple minimal pencils, see \cite{arbarello-cornalba}, \cite{copp2}, \cite{coppens}. In particular, a general curve of nonmaximal gonality has a \emph{unique} minimal pencil and in this case Schreyer has conjectured $b_{g-k,1}(C,\omega_C)=g-k$, which both implies Green's Conjecture $b_{g-k+1,1}(C,\omega_C)=0$ and gives a more geometric interpretation for it, as we explain below. \medskip

One of the goals of this paper is to prove a result interpolating and extending both the vanishing theorem of \cite{ein-lazarsfeld} and the verification of Schreyer's conjecture in \cite{lin-syz}. 
\begin{thm} \label{general-schreyer}
Let $C$ be a general $k$-gonal curve $C$ of genus $g \geq 2k-1$ for $k \geq 4$. Let $L$ be an arbitrary line bundle on $C$ of degree $\deg(L) \geq 2g+k$. Then $$b_{r(L)-k,1}(C,L)=r(L)-k.$$
\end{thm}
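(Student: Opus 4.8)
The plan is to establish the two inequalities $b_{r(L)-k,1}(C,L) \geq r(L)-k$ and $b_{r(L)-k,1}(C,L) \leq r(L)-k$ separately. Throughout write $r := r(L)$; since $\deg(L) \geq 2g+k > 2g-1$, the bundle $L$ is nonspecial and very ample, $r = \deg(L)-g$, and $C \seq \PP^{r}$ is projectively normal and nondegenerate.

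For the lower bound I would use the minimal pencil. The hypothesis $g \geq 2k-1$ places $k$ strictly below the maximal gonality, so by the results of Arbarello--Cornalba and Coppens recalled above $C$ has gonality exactly $k$ and a unique base-point-free $g^1_k$, call it $A$. Since $\deg(L) \geq 2g+k$ we have $h^1(L-A_t)=0$ for each fibre $A_t$ of $A$, so the spans $\overline{A_t} \seq \PP^r$ are $(k-1)$-planes sweeping out a $k$-dimensional rational normal scroll $\bar S_A$ of minimal degree $r-k+1$ containing $C$. From the Eagon--Northcott resolution of $\bar S_A$ one reads off $b_{r-k,1}(\bar S_A,\OO(1)) = r-k$, and that this is its extremal linear Betti number (the resolution of $\bar S_A$ has length $r-k$). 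For $\deg(L)$ this large the restriction $K_{r-k,1}(\bar S_A,\OO(1)) \to K_{r-k,1}(C,L)$ is injective --- no linear form vanishes on $C$ without vanishing on $\bar S_A$, and the relevant cohomology of $\mathcal{I}_{C/\bar S_A}$ vanishes --- and hence $b_{r-k,1}(C,L) \geq r-k$.

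The upper bound is the heart of the matter, and is where the new technical results of the paper enter. By Ein--Lazarsfeld \cite{ein-lazarsfeld} we have $K_{r-k+1,1}(C,L)=0$, so every nonzero $\gamma \in K_{r-k,1}(C,L)$ is an \emph{extremal} linear syzygy, one rung below the Ein--Lazarsfeld vanishing. I would show that every such $\gamma$ must come from $\bar S_A$. The general structure theory of syzygies of small rank, refined by the paper's characterization of syzygy schemes in terms of their projections, forces $\mathrm{Syz}(\gamma)$ to be a rational normal scroll through $C$; the converse to Aprodu's Projection Theorem then identifies this scroll with $\bar S_B$ for some base-point-free pencil $B$ on $C$ of degree $\leq k$. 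As $C$ has gonality exactly $k$ and a unique $g^1_k$, necessarily $B = A$. Therefore $K_{r-k,1}(C,L)$ lies in the image of $K_{r-k,1}(\bar S_A,\OO(1))$, which is $(r-k)$-dimensional by the previous paragraph, and so $b_{r-k,1}(C,L) \leq r-k$. Together with the lower bound this proves the theorem.

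The main obstacle I anticipate is precisely this last step: not merely showing that each individual extremal class is supported on \emph{some} scroll, but controlling the entire span, i.e.\ proving that the space of extremal syzygies coincides with the $(r-k)$-dimensional scrollar space and that no degenerate syzygy scheme --- one whose associated pencil acquires base points, or whose scroll is a cone, or which is visible only after passing to a lower-degree reembedding --- contributes an extra class. Making the converse Projection Theorem quantitative enough for this, via an induction on $\deg(L)$ that projects $C$ from general points of itself and uses the hypothesis $g \geq 2k-1$ to stay in the range where projection neither destroys nor creates extremal syzygies (and where the minimal pencil remains unique), is where the real care is needed; once this is in place, reducing the general case $\deg(L) \geq 2g+k$ to the boundary value $\deg(L)=2g+k$ should be formal.
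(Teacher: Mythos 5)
Your lower bound (injectivity of the restriction map from the scroll) matches the paper's setup, and your closing remarks correctly intuit that the engine should be an induction on $\deg(L)$ using projection from points of $C$. But as written the upper bound rests on a step that does not exist: there is no ``general structure theory of syzygies of small rank'' that forces $\mathrm{Syz}(\gamma)$ to be a rational normal scroll for an extremal syzygy of an embedded curve --- that assertion \emph{is} the theorem (it is exactly the statement $X_L \seq \mathrm{Syz}(\gamma)$, equation (\ref{inc-syz})), so your argument is circular at its core. The paper's actual mechanism (Proposition \ref{1-by-1}) is: assuming the Betti number is already minimal for $L$, take $M=L(x)$, project a nonzero $\alpha \in K_{r(M)-k,1}(C,M)$ from general points $y \in C$; by the inductive hypothesis $pr_y(\alpha)$ lives on the scroll $X_{M(-y)}$, Eisenbud--Popescu forces $\mathrm{Syz}(pr_y(\alpha))=X_{M(-y)}$, and Theorem \ref{char-syz-proj} reconstructs $\mathrm{Syz}(\alpha)$ as the intersection of the cones over these scrolls, which contains $X_M$. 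Your invocation of the converse to Aprodu's Projection Theorem is a misfire: that result concerns canonical and $1$-nodal curves and is used only for Theorem \ref{green-even}, not here. Likewise, the Ein--Lazarsfeld vanishing at the effective bound $\deg(L)\geq 2g+k$ is not available as an input --- the paper points out it was known only asymptotically and is a \emph{consequence} of this theorem.

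The second, equally serious gap is that your induction has no base and no way to move in genus. An induction on $\deg(L)$ that only propagates minimality upward must start somewhere, and the paper's base case is genuinely hard: for $g=2k-1$ and an $i$-very ample $L$ of degree $2g$ (with $i=k-1$), the bound $b_{i,1}(C,L)\leq i$ comes from the divisor class identity $\mathfrak{Syz}=\mathfrak{Sec}+i\,\mathfrak{hur}$ on $\mathcal{M}_{g,2g}$ proved in \cite{generic-secant}, combined with the first-order vanishing of the Hurwitz divisor at a curve with a unique, reduced $g^1_k$. Nothing in your proposal supplies this. Moreover the base case lives only at $g=2k-1$, so to reach arbitrary $g\geq 2k-1$ the paper runs a second induction on genus, degenerating to a compact-type curve $C\cup_q E$ with an elliptic tail attached at a ramification point of the minimal pencil and comparing Koszul cohomology across the degeneration. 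Your plan, which stays on a fixed smooth curve and only varies $\deg(L)$, cannot produce the statement for all genera; ``reducing to the boundary value $\deg(L)=2g+k$ should be formal'' is not the hard direction --- the induction goes up from a low-degree, low-genus seed, not down to it.
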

In the result above, $r(L)=h^0(L)-1$. This result was conjectured in \cite[Conjecture 0.5]{lin-syz}. \medskip

To explain the importance of Theorem \ref{general-schreyer} and illustrate the link to \cite{ein-lazarsfeld}, we first reinterpret the statement geometrically. One of the only classes of varieties for which it is possible to determine the minimal free resolution are \emph{determinantal varieties} described by the degeneracy locus of a morphism of vector bundles $\mathcal{V}_1 \to \mathcal{V}_2$, see \cite{lascoux}. One strategy to study syzygies of a curve $C$ is to embed it into a determinantal variety $Z$ and then restrict the determinantal syzygies to $C$. \medskip

This strategy was employed in Voisin's proof of Green's conjecture for an even genus $g=2k$ curves $C$ lying on a $K3$ surface $X$. In this case the determinantal variety is a Grassmannian $G(k+2,2)$ produced out of a rank two Lazarsfeld--Mukai bundle on $X$ (itself resulting from a minimal pencil on $C$). Voisin then proves that the length of the linear strand of the resolution of $C$ is equal to that of $G(k+2,2)$. See also the recent paper \cite{AFPRW}, which removes the need to study Hilbert schemes as in Voisin's original approach. \medskip

For another example, a minimal pencil $f: C \to \PP^1$ of degree $k$ induces a scroll 
$$X_L:=\bigcup_{p \in \PP^1} \langle f^{-1}(p) \rangle \seq \PP^{r(L)} .$$ The scroll is a determinantal variety containing the embedded curve $(C,L)$. In this case the Lascoux resolution of the scroll simplifies to an explicit resolution first found by Eagon--Northcott, see \cite[(6.1.6)]{weyman-book}. By comparing the syzygies of $(C,\omega_C)$ to those of the scroll $X_{\omega_C}$, Schreyer has classified all Betti tables of canonical curves of genus $g \leq 9$ in geometric terms, \cite{schreyer1}. \medskip

If $\deg(L) \geq 2g+k$, then the Eagon--Northcott complex shows $K_{p,1}(X_L,\mathcal{O}(1))=0$ for $p>r(L)-k$, whereas $b_{r(L)-k,1}(X_L,\mathcal{O}(1))=r(L)-k.$ Thus Theorem \ref{general-schreyer} can be interpreted as saying that all linear ${r(L)-k}^{th}$ syzygies of the curve arise from the scroll, i.e.\ restriction induces an isomorphism
$K_{r(L)-k,1}(X_L,\mathcal{O}(1)) \simeq K_{r(L)-k,1}(C,L).$ Since there are no relations amongst the ${r(L)-k}^{th} $ linear syzygies of the scroll $X_L$ the isomorphism above implies the vanishing $b_{p,1}(C,L)=0$ for $p>r(L)-k$ as predicted by \cite{ein-lazarsfeld}. In this way Theorem \ref{general-schreyer} provides a more geometric explanation for this vanishing statement, which was originally proved for line bundles of asymptotically high degree using Serre vanishing.\medskip

Theorem \ref{general-schreyer} also provides the first non-trivial step towards understanding the non-zero terms in the minimal free resolution of $C$, by giving the value of the last nonzero Betti number on the linear strand. This extremal Betti number is known to be responsible for much of the variance in the Betti tables of curves, \cite{schreyer-topics}. \medskip

The bound of Theorem \ref{general-schreyer} is optimal. Suppose $g=2k-1$ and $C$ is a general curve of genus $g$ and gonality $k$. Then, $C$ admits both $A \in W^1_k(C)$ as well as $B \in W^1_{k+1}(C)$. Set $L:=\omega_C\otimes B$. There are \emph{two} scrolls containing $C$, the scroll $X_{L}$ given by the union of $\text{Span}(D)$ for $D \in |A|$ as well as the scroll $Y_L=\bigcup_{E \in |B|} \text{Span}(E)$. Both scrolls contribute syzygies, implying $b_{r(L)-k,1}(C,L)>r(L)-k$. \medskip

The second goal of this paper is to use geometric constructions to refine the existing results on Green's Conjecture for \emph{special} canonical curves, \cite{green-koszul}. Firstly, we prove that Green's conjecture holds for general covers of elliptic curves of arbitrary degree, generalizing a result of Aprodu--Farkas when $d=3$ \cite{AF-covers}.
\begin{thm} \label{intro-elliptic}
Fix any elliptic curve $E$ and let $f: C \to E$ be a general, degree $d$ primitive cover of $E$ for $d \geq 3$. Then Green's Conjecture holds for $C$.
\end{thm}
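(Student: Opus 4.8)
\emph{Proof strategy.} The plan is to reduce Green's Conjecture for $C$ to the vanishing of a single linear Betti number of the canonical embedding, and then to exclude a nonzero syzygy there by analysing its syzygy scheme through projections, using the two technical results of this paper. First I would record the invariants of a general primitive degree-$d$ cover $f\colon C\to E$. Pulling back a line bundle $A\in\mathrm{Pic}^2(E)$ gives a pencil $f^{\ast}A$ of degree $2d$ on $C$ with $h^0(f^{\ast}A)=2$, so $\mathrm{gon}(C)\le 2d$ and $\mathrm{Cliff}(C)\le 2d-2$. A Brill--Noether analysis of the Hurwitz space of covers of an elliptic curve --- which one can carry out by degenerating $E$ to a cycle of rational curves, in the spirit of \cite{coppens}, \cite{copp2} --- shows that for general $f$ one in fact has $\mathrm{gon}(C)=\min\{2d,\lfloor\tfrac{g+3}{2}\rfloor\}$ and $\mathrm{Cliff}(C)=\min\{2d-2,\lfloor\tfrac{g-1}{2}\rfloor\}$, with Clifford dimension $1$ (equality holding since $C$ is not a plane curve). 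When $\mathrm{gon}(C)$ is maximal the theorem follows from Voisin's verification of the generic Green Conjecture \cite{V1}, \cite{V2}; so we may assume $C$ has non-maximal gonality, $k:=\mathrm{gon}(C)=2d$, and $\mathrm{Cliff}(C)=k-2$, computed by $f^{\ast}A$. By the self-duality $K_{p,2}(C,\omega_C)\cong K_{g-1-p,1}(C,\omega_C)^{\vee}$ of the canonical resolution together with the Green--Lazarsfeld non-vanishing $b_{g-k,1}(C,\omega_C)\neq 0$, Green's Conjecture for $C$ is equivalent to the single assertion
$$b_{g-k+1,\,1}(C,\omega_C)=0.$$

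I would then suppose, for contradiction, that $0\neq\gamma\in K_{g-k+1,1}(C,\omega_C)$. Since $\gamma$ lies one step beyond the extremal position $g-k$ of the linear strand, the characterization of syzygy schemes in terms of their projections, combined with the converse to Aprodu's Projection Theorem, reduces us to the case in which $\mathrm{Syz}(\gamma)\subseteq\PP^{g-1}$ strictly contains the canonical curve and is a variety of minimal degree: the mechanism is that projecting $C$ from the span of a general divisor of a suitable pencil keeps $\gamma$ non-zero with syzygy scheme the image of $\mathrm{Syz}(\gamma)$, so one may descend on the genus until the scheme is visibly a scroll, the Veronese and quadric exceptions being ruled out because $C$ has Clifford dimension $1$. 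A rational normal scroll containing the canonical curve has the form $\bigcup_{D\in|B|}\langle D\rangle$ for a linear series $|B|=g^r_b$ on $C$ with $h^1(B)\ge 2$ (otherwise the scroll fills $\PP^{g-1}$), hence with $\mathrm{Cliff}(B)=b-2r\ge\mathrm{Cliff}(C)=k-2$; its linear strand, given by the Eagon--Northcott complex, terminates in position equal to its codimension $g-1-(b-r)$. For this to reach position $g-k+1$ one needs $b-r\le k-2$, while $b-r=\mathrm{Cliff}(B)+r\ge k-2+r\ge k-1$ as $r\ge 1$, a contradiction. Consistently, the genuine pencils $f^{\ast}A$ of degree $k$ \emph{do} yield scrolls containing $C$, but with linear strand ending exactly in position $g-k$, matching $b_{g-k,1}(C,\omega_C)=g-k$ and contributing nothing in position $g-k+1$.

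The main obstacle will be that $C$ genuinely carries a positive-dimensional family $\{f^{\ast}A:A\in\mathrm{Pic}^2(E)\}$ of minimal pencils, so the linear-growth hypothesis $\dim W^1_k(C)=0$ underlying the classical Aprodu criterion fails outright; this is precisely why the new projection results are needed, and precisely the point at which one must rule out a syzygy scheme assembled from the whole elliptic family of scrolls rather than from a single one. The two inputs carrying the real weight are therefore (i) the Brill--Noether statement that a general cover of $E$ possesses no special linear series $g^r_b$ with $b-r\le k-2$ beyond those in $f^{\ast}\mathrm{Pic}^2(E)$, and (ii) the verification that the hypotheses of the converse to Aprodu's Projection Theorem hold along the Hurwitz space of covers of $E$. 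I expect both to be cleanest to establish by specializing $f$ to a maximally degenerate cover --- for instance a binary cover, or a chain of elliptic tails --- where the relevant Brill--Noether loci and the Betti numbers can be computed by hand, and then invoking upper semicontinuity of the Betti numbers to deduce the statement for the general cover.
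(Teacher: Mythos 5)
Your reduction to the single vanishing $b_{g-k+1,1}(C,\omega_C)=0$ and your Brill--Noether bookkeeping are fine, but the heart of your argument --- that a hypothetical $0\neq\gamma\in K_{g-k+1,1}(C,\omega_C)$ can be forced, via projections, into a position where $\mathrm{Syz}(\gamma)$ is ``visibly'' a variety of minimal degree, to be excluded by the Eagon--Northcott numerology --- is a genuine gap, not a routine step. Knowing that the syzygy scheme of a nonzero linear syzygy in this range is a variety of minimal degree is essentially Green's conjecture itself in disguise; neither of the paper's two technical results delivers it. Theorem \ref{char-syz-proj} only expresses $\mathrm{Syz}(\gamma)$ as an intersection of cones over syzygy schemes of projections, which is useless unless you already control those projected syzygy schemes (in the paper this control comes from Eisenbud--Popescu plus an independently established base case, and is used only for Theorem \ref{general-schreyer}, not here). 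As for the converse to Aprodu's Projection Theorem, its decisive hypothesis is the Schreyer-type equality $b_{g+1-k,1}(D,\omega_D)=g+1-k$ for the nodal curve $D$; you never verify this, and for covers of elliptic curves with $2d<\lfloor(g+3)/2\rfloor$ it is expected to \emph{fail}, precisely because the one-dimensional family of minimal pencils $f^{*}\mathrm{Pic}^2(E)$ produces a family of scrolls each contributing to the extremal Betti number --- the paper's introduction explicitly cites Schreyer's computations showing that $b_{g-2d,1}(C,\omega_C)$ behaves anomalously for such covers. Your proposed descent in genus also has no identified base case or termination.

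The paper's actual proof runs in the opposite direction and uses only the \emph{forward} Aprodu Projection Theorem. After the Brill--Noether analysis (done via dimension counts on $\mathcal{M}_g(E\times\PP^1,d_1,d_2)$ in the style of Arbarello--Cornalba, rather than by degenerating $E$), the case $2d\geq\frac{g+3}{2}$ of maximal gonality is settled by Voisin, Hirschowitz--Ramanan and Aprodu, since there the linear growth condition holds. For fixed $d$ this gives the base case $g=4d-3$, and one then inducts \emph{upward} on the genus: choosing $x\neq y$ with $f(x)=f(y)$, the nodal curve $D=C/(x\sim y)$ still covers $E$ with degree $d$, the Projection Theorem gives $b_{g+2-2d,1}(D,\omega_D)=0$ from the vanishing on $C$, and semicontinuity together with the irreducibility of $\mathcal{H}^E_{g+1}(d)$ (Gabai--Kazez) transfers the vanishing to the general smooth cover of genus $g+1$. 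If you want to salvage your approach you would need an independent proof that every nonzero syzygy in position $g-k+1$ lives on a variety of minimal degree, which is not available at this level of generality.
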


Our interest in this result derives from several angles. On the one hand, the strongest known prior result on Green's Conjecture is Aprodu's Theorem, \cite{aprodu-remarks}, stating that Green's conjecture holds for any $k$-gonal curve $C$ of genus $g$ provided one has the \emph{linear-growth condition}
$$\dim W^1_{k+n}(C) \leq n, \; \; \text{for all $0 \leq n \leq g+2-2k$} .$$
 Aprodu's Theorem was a crucial part of Aprodu--Farkas' well-known result that Green's conjecture holds for curves on arbitrary K3 surfaces, \cite{aprodu-farkas}.\medskip

The linear growth condition is conjecturally equivalent to requiring $$\dim W^1_{k}(C)=0,$$ provided $g \leq 2k-1$. Hence the most interesting open case of Green's Conjecture is for curves where this condition fails, i.e.\ for curves of non-maximal gonality with infinitely many minimal pencils. Perhaps the most natural example of such curves arises from general covers $f: C \to E$ of elliptic curves of degree $d$. Provided $2d \leq \lfloor \frac{g+3}{2} \rfloor$, then such curves $C$ have gonality $2d$ and note that we always have $\dim W^1_{2d}(C) \geq 1$, by pulling back line bundles of degree two from $E$.\medskip

Covers of elliptic curves also play an important role in the computer experiments of Schreyer, \cite[\S 6]{schreyer-topics}, which indicate that the Betti tables of covers of elliptic curves seem to behave very differently from general curves of gonality $2d$. In particular, the last Betti number $b_{g-2d,1}(C,\omega_C)$ in the $2$-linear strand need not be a multiple of $g-k$, which is rather strange given the computations of Hirschowitz--Ramanan \cite{hirsch}. It is this phenomenon that led us to consider such curves.\medskip

The last result we prove is an improvement to the prior results on Green's conjecture for curves of even genus and maximal gonality. Recall that results of Hirschowitz--Ramanan \cite{hirsch}, when combined with \cite{V2}, establish Green's Conjecture for all curves of odd genus and maximal gonality. In the case of curves of even genus $g=2n$ and maximal gonality $k=n+1$, the best known prior result is Aprodu's Theorem, establishing the conjecture for curves $C$ with $\dim W^1_{n+1}(C)=0$. One always has $\dim W^1_{n+1}(C)\leq 1$ by \cite{FHL}, so it remains to analyse the case $$\dim W^1_{n+1}(C)= 1.$$ We go quite a long way towards resolving this remaining case:
\begin{thm} \label{green-even}
Let $C$ be a smooth curve of genus $g=2n$ and gonality $k=n+1$. Suppose for $x,y \in C$ general, there is at most one $A \in W^1_{n+1}(C)$ such that $A(-x-y)$ is effective. Further assume $h^0(C,A^{\otimes 2})=3$. Then $C$ satisfies Green's Conjecture.
\end{thm}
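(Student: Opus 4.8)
The plan is to argue by contradiction, assuming $K_{n,1}(C,\omega_C)\neq 0$. First I would dispose of the easy case: by \cite{FHL} one has $\dim W^1_{n+1}(C)\leq 1$, and if $\dim W^1_{n+1}(C)=0$ then the linear-growth hypothesis of Aprodu's Theorem is satisfied (its range, for $g=2n$ and $k=n+1$, consists of the single requirement $\dim W^1_{n+1}(C)=0$), so Green's Conjecture holds. Hence I may assume $\dim W^1_{n+1}(C)=1$. The hypothesis that a general pair $x+y$ lies in $|A|$ for at most one $A\in W^1_{n+1}(C)$ then says that the one-dimensional family of minimal pencils sweeps out $C$ with multiplicity one; in particular a general point of $C$ lies on only finitely many of the scrolls $X_A=\bigcup_{D\in|A|}\langle D\rangle\subseteq\PP^{g-1}$.

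The core of the argument is to locate the syzygy scheme of a nonzero class $0\neq\gamma\in K_{n,1}(C,\omega_C)$. Such a $\gamma$ lies one step beyond the Green--Lazarsfeld nonvanishing: a minimal pencil $A$ yields a scroll $X_A$ of dimension $n$ and codimension $n-1$ in $\PP^{g-1}=\PP^{2n-1}$, whose Eagon--Northcott resolution accounts for the Green--Lazarsfeld class in $K_{n-1,1}(C,\omega_C)$ but satisfies $K_{n,1}(X_A,\mathcal{O}(1))=0$. I would now bring in the two technical results of the paper: projecting $C$ from a general point $x$ --- general enough, by the ``unique $A$'' hypothesis, that we control which scrolls pass through $x$ --- the characterization of syzygy schemes in terms of projections shows that $\mathrm{Syz}(\gamma)$ is determined by the syzygy scheme of the projected class, while the converse to Aprodu's Projection Theorem identifies that projected scheme, via the classification underlying Theorem~\ref{general-schreyer} (extremal linear syzygies of curves of non-maximal gonality arise from scrolls), as a minimal-degree scroll attached to a minimal pencil. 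Un-projecting, $\mathrm{Syz}(\gamma)$ is built from the scrolls $X_A$; and since a single such scroll cannot support $\gamma$ (as $K_{n,1}(X_A,\mathcal{O}(1))=0$), the class $\gamma$ is forced to involve at least two distinct pencils, so that $C$ lies on $X_A\cup X_{A'}$ for some $A\neq A'$ (or on a flat degeneration of such a union).

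It then remains to exclude this configuration, and here both numerical hypotheses enter. The assumption $h^0(C,A^{\otimes 2})=3$ says that every section of $A^{\otimes 2}$ is pulled back from $\PP^1$; this pins $X_A$ down as the expected smooth scroll and makes the multiplication and restriction maps attached to it --- in particular the one controlling the quadrics through $X_A$ --- of the expected rank. The ``at most one $A$'' assumption forces the rulings of $X_A$ and $X_{A'}$ to be transverse along $C$, so that $X_A\cap X_{A'}$ contains no divisor through which $C$ could pass. Feeding this back into the projection machinery --- projecting $C$ repeatedly from general points, each of which (by the genericity arranged above) lies on essentially one of the two scrolls, and checking at each stage via the converse to the Projection Theorem that the descending syzygies stay non-degenerate --- I would descend the hypothetical class in $K_{n,1}(C,\omega_C)$ either to a nonzero linear syzygy explained only by a pencil of degree $\leq n$, contradicting $\mathrm{gon}(C)=n+1$, or to a configuration contradicting $h^0(C,A^{\otimes 2})=3$.

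The hardest step, I expect, is precisely this final descent. Because $\dim W^1_{n+1}(C)=1$ rather than $0$, the family of minimal-pencil scrolls is ``one dimension too large'' for Aprodu's clean argument: their union is an $(n+1)$-fold that only just fails to have minimal degree, and the whole proof rests on quantifying that failure. Tracking exactly which scrolls a general point of $C$ lies on as one projects, and verifying that the converse to the Projection Theorem applies at each stage, is the delicate part; the two hypotheses of Theorem~\ref{green-even} should be exactly what is needed to make this bookkeeping succeed.
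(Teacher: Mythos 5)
Your reduction of the easy case is correct and matches the paper: if no $A\in W^1_{n+1}(C)$ has $A(-x-y)$ effective for general $x,y$, then $\dim W^1_{n+1}(C)=0$ and Aprodu's Theorem applies. But the main case is not proved. The paper's argument is short and direct: choose general $x,y$ lying in a common divisor of the unique such $|A|$, glue them to form the $1$-nodal curve $D$ of genus $2n+1$ with a pencil $B\in W^1_{n+1}(D)$, $\nu^*B\simeq A$; check that $B$ is the \emph{unique} rank-one torsion-free sheaf of degree $n+1$ on $D$ with two sections (non-locally-free ones would push forward from a $g^1_n$ on $C$, contradicting $\mathrm{gon}(C)=n+1$, and locally free ones are excluded by your ``at most one $A$'' hypothesis) and that $h^0(D,B^{\otimes 2})=3$; since $D$ has \emph{odd} genus $2n+1$ and $n+1=\frac{g_D+1}{2}$, the Hirschowitz--Ramanan/Voisin result (as refined in \cite[Remark 3.2]{lin-syz}) gives $b_{n,1}(D,\omega_D)=n$; and then Theorem~\ref{intro-converse} applied to $D$ yields $K_{n,1}(C,\omega_C)=0$ outright. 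The hypotheses of the theorem are exactly what is needed to make $D$ satisfy Schreyer's Conjecture.

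Your proposal never performs this construction, and the contradiction argument you substitute for it has a genuine gap at its core. You assert that a nonzero $\gamma\in K_{n,1}(C,\omega_C)$ would have syzygy scheme built from scrolls, citing the projection characterization and Theorem~\ref{general-schreyer}; but those results (via Eisenbud--Popescu) identify syzygy schemes only for \emph{extremal} classes supported on a scroll, whereas here, as you yourself note, $K_{n,1}(X_A,\mathcal{O}(1))=0$, so no scroll supports $\gamma$ and nothing forces $\mathrm{Syz}(\gamma)$ to be a union of two scrolls rather than, say, all of $\PP^{2n-1}$. You also invert the direction of the converse Projection Theorem: it passes from a nodal curve of genus $g+1$ satisfying Schreyer's Conjecture down to the smooth curve of genus $g$, not from $C$ to its projections. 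Finally, the ``descent'' that is supposed to produce the contradiction is explicitly left open (``the hardest step, I expect, is precisely this final descent''), so the argument does not close. The missing idea is the gluing construction: the two hypotheses are not there to control intersections of scrolls on $C$, but to guarantee that the odd-genus nodal model $D$ has a unique, reduced minimal pencil, which is precisely the input Hirschowitz--Ramanan needs.
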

An example of a curve satisfying the assumptions above is given by a general degree $d$ primitive cover $f: C \to E$ of an elliptic curve, where $n$ is odd, $2d=n+1$ and $C$ has genus $2n=4d-2$. \medskip

\subsection{Techniques} The results above are proven by associating geometric varieties and constructions to syzygy spaces. In the process, we obtain some technical results on syzygies which we believe are of independent interest. \medskip

To explain the techniques, first recall that syzygy spaces $K_{i,j}(X,L)$ can be identified with the middle cohomology of
$$\bigwedge^{i+1}H^0(L) \otimes H^0((j-1)L) \xrightarrow{\delta} \bigwedge^{i}H^0(L) \otimes H^0(jL) \xrightarrow{\delta} \bigwedge^{i-1}H^0(L) \otimes H^0((j+1)L) ,$$
where $\delta$ is the Koszul differential.\medskip

Suppose $V \seq H^0(X,L)$ is a codimension one subspace, and let $pr_V : \PP(H^0(X,L)^{\vee}) \dashrightarrow \PP(V^{\vee})$ be the associated projection morphism. As first observed by Green \cite[\S 1.b]{green-koszul-II}, it can be useful to compare the syzygies of $\Gamma_X(L)$ when considered as a $\text{Sym}(V)$ module to its syzygies as a $\text{Sym} \left(H^0(X,L) \right)$ module. Namely, letting $K_{i,j}(\Gamma_X(L),V)$ denote the middle cohomology of 
$$\bigwedge^{i+1}V \otimes H^0((j-1)L) \xrightarrow{\delta} \bigwedge^{i}V \otimes H^0(jL) \xrightarrow{\delta} \bigwedge^{i-1}V \otimes H^0((j+1)L) ,$$
we have the \emph{projection map}
$$pr_V \; : \; K_{i,j}(X,L) \to K_{i-1,j}(\Gamma_X(L),V),$$
see \cite[\S 2.2.1]{aprodu-nagel}. If we suppose that $\phi_L \; : X \hookrightarrow \PP(H^0(X,L)^{\vee})$ is projectively normal, that $x \in X \seq \PP(H^0(X,L)^{\vee})$ is a point corresponding to a codimension one subspace $V \seq H^0(X,L)$, and that further the projected variety $pr_x(X) \seq \PP(V^{\vee})$ is projectively normal, then the projection map factors through a map
$$pr_x \; : \; K_{i,j}(X,L) \to K_{i-1,j}(pr_x(X),\mathcal{O}_{pr_x(X)}(1)),$$
where we have an inclusion $K_{i-1,j}(pr_x(X),\mathcal{O}_{pr_x(X)}(1)) \seq K_{i-1,j}(\Gamma_X(L),V)$.\medskip

The projection map $pr_x \; : \; K_{i,j}(X,L) \to K_{i-1,j}(pr_x(X),\mathcal{O}_{pr_x(X)}(1))$ has been used by Choi, Kang and Kwak to prove that if the projective variety $X \seq \PP(H^0(X,L)^{\vee})$ as above satisfies the Green--Lazarsfeld property $(N_p)$ and if, furthermore, $pr_x(X) \simeq X$, then $pr_x(X) \seq \PP(V^{\vee})$ satisfies property $(N_{p-1})$, \cite{choi-kang-kwak}. Specialising to the case where $X$ is a smooth curve $C$, Aprodu has used the projection map to study the Green and Green--Lazarsfeld Conjectures for curves, \cite{aprodu-higher}. \medskip

Our first technical contribution is to relate the projection map to Green's notion of \emph{syzygy scheme}. For a syzygy $\alpha \neq 0 \in K_{p,1}(X,L)$, the syzygy scheme $\text{Syz}(\alpha)$ is defined to be the largest variety $Y \seq \PP(H^0(X,L)^{\vee})$ containing $X$ such that $\alpha$ arises via restriction from a syzygy $\alpha' \in K_{p,1}(Y,\mathcal{O}_Y(1))$, \cite{green-canonical}. It is well known that one has the following containment
$$pr_x(\text{Syz}(\alpha)) \seq \text{Syz}(pr_x(\alpha))$$
for any $x \in X$, relating the syzygy scheme of a syzygy to the syzygy scheme of its projection, see \cite[\S 3]{aprodu-nagel}. Furthermore, it is known that if $Z \seq X$ is a spanning set of $\PP(H^0(X,L)^{\vee})$ and $\alpha \neq 0$, then there is some $x \in Z$ such that $pr_x(\alpha) \neq 0$, or, equivalently, $\text{Syz}(pr_x(\alpha))\neq \PP(V^{\vee})$, \cite[Prop.\ 2.14]{aprodu-nagel}.  \medskip

We provide here a generalization of both of the above statements, showing that one can completely recover the syzygy scheme of an element $\alpha \in K_{p,1}(X,L)$ from the syzygy scheme of projections $pr_x(\alpha)$. Let 
$X \seq \PP(H^0(X,L)^{\vee})$ be an integral, projective variety embedded by a very ample line bundle $L$ as above, and assume for simplicity that $X$ is projectively normal. For any $x \in X \seq \PP(H^0(X,L)^{\vee})$, let $W_x \seq H^0(X,L)$ be the corresponding codimension one subspace.
\begin{thm}  \label{intro-syz-scheme}
Let $X$ be as above and let $Z \seq X$ be a subset spanning $\PP(H^0(X,L)^{\vee})$. Assume that for all $x \in Z$, $pr_x(X) \seq \PP(W_x^{\vee})$ is linearly normal and non-degenerate. Let $\alpha \neq 0 \in K_{p,1}(X,L)$. Then
$$\text{Syz}(\alpha) = \bigcap_{x \in Z} \text{Cone}_x\left( \text{Syz}(pr_x(\alpha)) \right),$$
where, for any $Y \in \PP(W_x^{\vee})$, $Cone_x(Y) \seq \PP(V^{\vee})$ denotes the cone with vertex $x$.
\end{thm}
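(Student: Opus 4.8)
The plan is to reduce the theorem to a single identity of linear subspaces of $\text{Sym}^2 H^0(X,L)$, namely
$$Q_\alpha \;=\; \sum_{x \in Z} Q_{pr_x(\alpha)},$$
where for a nonzero linear syzygy $\beta \in K_{p,1}$ on a non‑degenerate, linearly normal variety, $Q_\beta$ denotes the span of its syzygy quadrics, i.e.\ the quadrics obtained by contracting a Koszul cocycle representative $\widetilde{\beta}$ against $\bigwedge^{p-1}$ of the dual of the relevant space of linear forms. The reduction rests on the quadric description of the syzygy scheme: writing $S=\text{Sym}\,H^0(X,L)$, one has $\text{Syz}(\alpha)=\text{Proj}\bigl(S/\langle Q_\alpha\rangle\bigr)$. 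This follows from the elementary observation that, for any non‑degenerate $X'\supseteq X$, the Koszul complexes computing the relevant syzygies of $(X,L)$ and of $X'$ coincide except in the degree‑two part of the homogeneous coordinate ring, where one quotients by $I_2(X)$ resp.\ $I_2(X')\subseteq I_2(X)$; since coboundaries are killed by the Koszul differential $\widetilde{\delta}$, the class $\alpha$ arises by restriction from $K_{p,1}(X',\mathcal{O}_{X'}(1))$ if and only if $Q_\alpha\subseteq I_2(X')$, and the maximality property defining $\text{Syz}(\alpha)$ then identifies it with $\text{Proj}(S/\langle Q_\alpha\rangle)$. The same description applies to the projected class, giving $\text{Syz}(pr_x(\alpha))=\text{Proj}\bigl(\text{Sym}(W_x)/\langle Q_{pr_x(\alpha)}\rangle\bigr)$ with $Q_{pr_x(\alpha)}\subseteq\text{Sym}^2 W_x$. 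Since the cone with vertex $x$ over $\text{Proj}(\text{Sym}(W_x)/J)$ is $\text{Proj}(S/JS)$, and scheme‑theoretic intersection is computed by the sum of ideals, the right‑hand side of the theorem equals $\text{Proj}\bigl(S/\langle\bigcup_{x\in Z}Q_{pr_x(\alpha)}\rangle\bigr)$; hence the displayed identity implies the theorem (even scheme‑theoretically).

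For the inclusion $\supseteq$, fix $x\in Z$, choose a splitting $H^0(X,L)=W_x\oplus\langle s_x\rangle$, and write the cocycle as $\widetilde{\alpha}=\widetilde{\alpha}_0+s_x\wedge\widetilde{\alpha}_1$ with $\widetilde{\alpha}_0\in\bigwedge^p W_x\otimes H^0(L)$ and $\widetilde{\alpha}_1\in\bigwedge^{p-1}W_x\otimes H^0(L)$ representing $pr_x(\alpha)$. A Leibniz computation gives $\widetilde{\delta}(s_x\wedge\widetilde{\alpha}_1)=m_{s_x}(\widetilde{\alpha}_1)-s_x\wedge\widetilde{\delta}(\widetilde{\alpha}_1)$, where $m_{s_x}$ multiplies the section factor by $s_x$; moreover $\widetilde{\delta}(\widetilde{\alpha}_0)$ has exterior factor in $\bigwedge^{p-1}W_x$, i.e.\ involves no $s_x$. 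Contracting $\widetilde{\delta}(\widetilde{\alpha})$ against $s_x^{\vee}\wedge\eta$ with $\eta\in\bigwedge^{p-2}W_x^{\vee}$ therefore returns, up to sign, $\langle\eta,\widetilde{\delta}(\widetilde{\alpha}_1)\rangle$, which is a generator of $Q_{pr_x(\alpha)}$. Hence $Q_{pr_x(\alpha)}\subseteq Q_\alpha$, and summing over $Z$ gives $\supseteq$. (In particular this re‑proves the inclusion $\text{Syz}(\alpha)\subseteq\bigcap_{x\in Z}\text{Cone}_x(\text{Syz}(pr_x(\alpha)))$ without invoking the known containment $pr_x(\text{Syz}(\alpha))\subseteq\text{Syz}(pr_x(\alpha))$.)

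For the inclusion $\subseteq$ I exploit that $Z$ spans. A general element of $Q_\alpha$ is a contraction $\langle\zeta,\widetilde{\delta}(\widetilde{\alpha})\rangle$ with $\zeta\in\bigwedge^{p-1}H^0(X,L)^{\vee}$. Since $Z$ spans $\PP(H^0(X,L)^{\vee})$, the lines $\langle s_x^{\vee}\rangle$ ($x\in Z$) span $H^0(X,L)^{\vee}$, so $\bigwedge^{p-1}H^0(X,L)^{\vee}$ is spanned by decomposables $s_{x_1}^{\vee}\wedge\cdots\wedge s_{x_{p-1}}^{\vee}$ with $x_i\in Z$. For such a decomposable one may replace each $s_{x_j}^{\vee}$, $j\ge 2$, by its image in $W_{x_1}^{\vee}=H^0(X,L)^{\vee}/\langle s_{x_1}^{\vee}\rangle$ (the correction lies in $\langle s_{x_1}^{\vee}\rangle$ and is wedged out against the leading factor), so that
$$s_{x_1}^{\vee}\wedge s_{x_2}^{\vee}\wedge\cdots\wedge s_{x_{p-1}}^{\vee}\;=\;s_{x_1}^{\vee}\wedge\eta,\qquad \eta\in\textstyle\bigwedge^{p-2}W_{x_1}^{\vee}.$$
By Step 2, $\langle s_{x_1}^{\vee}\wedge\eta,\widetilde{\delta}(\widetilde{\alpha})\rangle\in Q_{pr_{x_1}(\alpha)}$. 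Thus every generator of $Q_\alpha$ already lies in some $Q_{pr_x(\alpha)}$, giving $Q_\alpha\subseteq\sum_{x\in Z}Q_{pr_x(\alpha)}$; together with Step 2 this is the displayed identity, and the theorem follows.

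I expect the principal obstacle to be not the Koszul bookkeeping above but the bookkeeping around the projected syzygy itself: one must make sense of the syzygy scheme of $pr_x(\alpha)$ as a class in $K_{p-1,1}(\Gamma_X(L),W_x)$, and show that its syzygy quadrics $Q_{pr_x(\alpha)}$ do form a subspace of $\text{Sym}^2 W_x$ and coincide with the contractions $\langle\eta,\widetilde{\delta}(\widetilde{\alpha}_1)\rangle$ used in Step 2. The delicacy is that $pr_x(X)$ is only assumed \emph{linearly} normal, so $pr_x(\alpha)$ need not descend to $K_{p-1,1}(pr_x(X),\mathcal{O}(1))$ and the cocycle $\widetilde{\alpha}_1$ cannot in general be chosen with section factor in $W_x$; pinning this down — which is precisely the ``precise characterization of syzygy schemes in terms of projections'' advertised in the abstract — is the technical core, and once it is in place the rest is a formal consequence of it and of the spanning of $Z$.
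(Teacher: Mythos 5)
Your reduction of the theorem to the identity $Q_\alpha=\sum_{x\in Z}Q_{pr_x(\alpha)}$ of spaces of syzygy quadrics is sound, and Steps 2 and 3 (the Leibniz decomposition $\widetilde{\delta}(s_x\wedge\widetilde{\alpha}_1)=m_{s_x}(\widetilde{\alpha}_1)-s_x\wedge\widetilde{\delta}(\widetilde{\alpha}_1)$, and the decomposition of $\bigwedge^{p-1}H^0(L)^{\vee}$ into decomposables $s_{x_1}^{\vee}\wedge\cdots\wedge s_{x_{p-1}}^{\vee}$ supported on the spanning set $Z$) are correct. The one genuine gap is the step you flag and leave open: identifying $Q_{pr_x(\alpha)}$ with a subspace of $\text{Sym}^2 W_x$ whose zero locus is $\text{Cone}_x(\text{Syz}(pr_x(\alpha)))$. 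Without it the contractions $\langle\eta,\widetilde{\delta}(\widetilde{\alpha}_1)\rangle$ only live in $W_x\cdot H^0(L)$, their zero locus need not be a cone with vertex $x$, and the displayed identity of quadric spaces does not yet yield the theorem. But your diagnosis of this step is too pessimistic: for $x\in Z\seq X$ the cocycle $\widetilde{\alpha}_1$ \emph{can} always be chosen with section factor in $W_x$. Writing $\widetilde{\alpha}_1=\widetilde{\alpha}_{1,W}+\widetilde{\alpha}_{1,s}\otimes s_x$ with $\widetilde{\alpha}_{1,W}\in\bigwedge^{p-1}W_x\otimes W_x$ and $\widetilde{\alpha}_{1,s}\in\bigwedge^{p-1}W_x$, a direct computation shows that $\widetilde{\alpha}_{1,s}=\pm\, i_x(\bar{\alpha}(x))$, where $\bar{\alpha}\in\text{Hom}(V^{\vee},\bigwedge^pV)$ is the representative appearing in Proposition \ref{AN-proj-syz}; moreover $\widetilde{\alpha}_{1,s}$ is unchanged by adding coboundaries, so it is an invariant of the class. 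Since $X\seq\text{Syz}(\alpha)$ by definition and $x\in X$, Proposition \ref{AN-proj-syz}(i) gives $i_x(\bar{\alpha}(x))=0$, hence $\widetilde{\alpha}_{1,s}=0$ and $\widetilde{\alpha}_1\in\bigwedge^{p-1}W_x\otimes W_x$. (This is exactly the content of Proposition \ref{AN-proj-syz}(ii), i.e.\ Aprodu--Nagel Prop.\ 3.15: projecting from a point \emph{of $X$}, the projected syzygy always descends to $pr_x(X)$.) Linear normality and non-degeneracy of $pr_x(X)$ then let you apply your Step 1 description to the descended class, identifying $Q_{pr_x(\alpha)}=\{\langle\eta,\widetilde{\delta}(\widetilde{\alpha}_1)\rangle\}\seq\text{Sym}^2W_x$ with its syzygy quadrics, and your argument closes.

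With that point supplied, your route is a legitimate alternative to the one in the paper, and in one respect stronger. The paper argues pointwise: it uses $\text{Syz}(\alpha)=\{[y]\,:\,i_y(\bar{\alpha}(y))=0\}$, derives the forward inclusion from the commutative square relating $i_{f(y)}\circ\overline{\beta}$ to $i_x\circ i_y\circ\overline{\alpha}$, and derives the reverse inclusion from the observation that for $0\neq u\in\bigwedge^{p-1}V$ the set $\{z\in V^{\vee}:i_z(u)=0\}$ is a proper subspace, so the spanning of $Z$ forces $i_y(\overline{\alpha}(y))=0$. Your argument is the ideal-theoretic dual of this mechanism, with the spanning of $Z$ used instead to generate $\bigwedge^{p-1}V^{\vee}$ by decomposables with factors along $Z$. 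What the dual version buys is a scheme-theoretic equality (the ideal generated by $Q_\alpha$ equals the sum of the cone ideals), whereas the pointwise argument as written gives equality of the underlying sets.
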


Our next technical result is related to the following early application of the technique of projection of syzygies:
\begin{thm} [Aprodu's Projection Theorem] \label{AP1}
Let $C$ be a smooth curve of genus $g$ and suppose $x, y \in C$ are distinct points. Let $D$ be the $g+1$ nodal curve obtained by identifying $x$ and $y$. Suppose $K_{p,1}(C,\omega_C)=0$. Then $K_{p+1,1}(D,\omega_D)=0$.
\end{thm}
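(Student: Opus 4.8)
The plan is to exploit the projection map $pr_x : K_{p+1,1}(D,\omega_D) \to K_{p,1}(\Gamma_D(\omega_D), W_x)$ together with the key geometric fact that, for a nodal curve $D$ of arithmetic genus $g+1$ obtained by gluing $x\sim y$ on $C$, the partial normalization at the node relates $\omega_D$ to $\omega_C(x+y)$. Concretely, if $\nu : C \to D$ is the normalization, then $H^0(D,\omega_D) \simeq H^0(C,\omega_C(x+y))$, and the point $x \in D \subseteq \PP(H^0(D,\omega_D)^\vee)$ (equivalently $y$, since they are glued) corresponds to the codimension-one subspace $W_x = H^0(C,\omega_C) \subseteq H^0(C,\omega_C(x+y))$. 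The projection $pr_x(D) \subseteq \PP(W_x^\vee) = \PP(H^0(C,\omega_C)^\vee)$ is then nothing but the canonical image of $C$ itself (the two branches at the node are sent to $x,y$, whose canonical images need not coincide, so projecting away resolves the node). Thus I would first establish the identifications $pr_x(D) = \phi_{\omega_C}(C)$ and, using that $C$ is not hyperelliptic in the relevant range so the canonical map is an embedding, identify $K_{p,1}(pr_x(D),\sheafO(1)) = K_{p,1}(C,\omega_C)$, which vanishes by hypothesis.

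The second step is to show that the projection map is \emph{injective} on $K_{p+1,1}(D,\omega_D)$, so that its vanishing target forces $K_{p+1,1}(D,\omega_D)=0$. The kernel of $pr_x : K_{p+1,1}(X,L) \to K_{p,1}(\Gamma_X(L),W_x)$ consists of syzygies whose syzygy scheme contains the point $x$ — more precisely, a class $\alpha$ lies in the kernel exactly when $\alpha$ extends to a syzygy of the cone over $pr_x(X)$ with vertex $x$, i.e. when $x \in \text{Syz}(\alpha)$ in a suitable sense. Using the spanning-point principle (\cite[Prop.\ 2.14]{aprodu-nagel}, recalled in the excerpt): since $C$, and hence $D$, spans the ambient $\PP^{g}$, if $\alpha \neq 0 \in K_{p+1,1}(D,\omega_D)$ then there is \emph{some} point $z$ of $D$ with $pr_z(\alpha) \neq 0$. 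To conclude I need this to hold specifically for $z = x = y$; the point here is that by genericity of the two branches, or by directly analyzing $\text{Syz}(\alpha)$ for a linear syzygy on a curve, the syzygy scheme of $\alpha$ cannot contain the node. Equivalently, invoking Theorem \ref{intro-syz-scheme} with $Z$ any spanning subset containing $x$: if $pr_x(\alpha) = 0$ then $\text{Syz}(pr_x(\alpha)) = \PP(W_x^\vee)$, and $\text{Cone}_x$ of this is all of $\PP(H^0(D,\omega_D)^\vee)$; but since $pr_x(\alpha)=0$ would need to be consistent with $\text{Syz}(\alpha)$ being cut out as an intersection of cones, and for a genuinely linear syzygy $\text{Syz}(\alpha)$ is a proper nondegenerate subvariety, one derives a contradiction unless $\alpha$ restricts nontrivially after projecting from $x$.

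The main obstacle I anticipate is exactly this injectivity step, i.e.\ controlling the kernel of $pr_x$ at the specific node. It requires knowing that the syzygy scheme $\text{Syz}(\alpha)$ of a nonzero linear syzygy on the canonical curve $D$ does not contain the point $x$; this is where one must use that $x$ and $y$ are \emph{distinct} on $C$ (so the node is a genuine node and the two branches are separated by the canonical system of $C$) and the projective normality of the canonical embedding of $D$. One clean way to handle it: the failure $pr_x(\alpha) = 0$ would mean $\alpha$ comes from the cone over $\phi_{\omega_C}(C)$ with vertex $x$; but a linear first syzygy lives on a variety cut out by quadrics of rank $\leq$ small, and such a cone structure is incompatible with $\alpha$ being a class coming from $\omega_D$ on the nodal curve unless $\alpha$ already lies in the image of $K_{p+1,1}$ of that cone — which one then rules out by a dimension or degree count, or by noting the cone is not $2$-regular. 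Modulo this, the theorem follows: $K_{p+1,1}(D,\omega_D) \hookrightarrow K_{p,1}(C,\omega_C) = 0$.
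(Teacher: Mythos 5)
This theorem is imported from \cite{aprodu-higher}; the paper states it twice but never proves it, so there is no internal proof to compare against, and I am judging your proposal on its own terms. Your first step is fine and matches the setup the paper uses in Section 3 for the \emph{converse} statement: the node of $D\seq\PP^g$ corresponds to the hyperplane $H^0(C,\omega_C)\seq H^0(D,\omega_D)\simeq H^0(C,\omega_C(x+y))$, and the inner projection of $D$ from the node is the canonical curve $C$. The gap is the injectivity step, and it is not a loose end to be patched --- it is the entire theorem --- and the mechanisms you propose for it are incorrect. The kernel of $pr_x$ is \emph{not} the set of syzygies with $x\in\text{Syz}(\alpha)$: by definition $\text{Syz}(\alpha)$ is a subscheme \emph{containing} $D$, so it contains the node for every nonzero $\alpha$, and if your characterization were right the projection from any point of the curve would be identically zero. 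What Proposition \ref{AN-proj-syz}(ii) actually gives is that $x\in\text{Syz}(\alpha)$ forces $pr_x(\alpha)$ to be a syzygy of the projected variety, i.e.\ to lie in $K_{p,1}(C,\omega_C)$ inside $K_{p,1}(\Gamma_D(\omega_D),W_x)$; this is the factorization of the target you need, but it says nothing about the kernel. For the same reason the assertion that ``$\text{Syz}(\alpha)$ cannot contain the node'' is false, and \cite[Prop.\ 2.14]{aprodu-nagel} only produces \emph{some} point of a spanning set with nonzero projection, with no way to force that point to be the node.

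More fundamentally, the injectivity you want cannot follow from any soft, degree-independent argument of the kind you sketch, because it fails one step down the linear strand: for a general $k$-gonal $C$ and the associated nodal $D$ one has $b_{g+1-k,1}(D,\omega_D)=g+1-k$ while $b_{g-k,1}(C,\omega_C)=g-k$ (Schreyer's Conjecture, proved in \cite{lin-syz} in this generality), so $pr_x\colon K_{g+1-k,1}(D,\omega_D)\to K_{g-k,1}(C,\omega_C)$ is \emph{not} injective. Hence any correct proof must feed the hypothesis $K_{p,1}(C,\omega_C)=0$ into the injectivity itself, not merely use it to identify the target as zero; with the target already zero, ``$pr_x$ is injective'' is verbatim the statement to be proved, so your reduction is circular. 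Concretely, the long exact sequence attached to the codimension-one subspace $W_x\seq H^0(D,\omega_D)$ identifies $\ker(pr_x)$ with the image of $K_{p+1,1}(\Gamma_D(\omega_D),W_x)$ in $K_{p+1,1}(D,\omega_D)$, and the substance of Aprodu's argument is to kill this group (via kernel bundles and the propagation of the vanishing $K_{q,1}(C,\omega_C)=0$ for $q\geq p$); none of that analysis is present in your proposal.
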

This allows one to prove the generic Green's conjecture for curves of a fixed gonality by induction on the genus. Perhaps the most interesting case of Theorem \ref{AP1} is when $C$ is a curve of gonality $k$ and Clifford index $k-2$, with minimal pencil $f: C \to \PP^1$ of degree $k$, and where $p=g-k+1$ and $x,y \in C$ are distinct points with $f(x)=f(y)$. In this case, Theorem \ref{AP1} implies that if $C$ satisfies Green's Conjecture, then the nodal curve $D$ obtained by identifying $x$ and $y$ also has gonality $k$ and Clifford index $k-2$, and furthermore $D$ satisfies Green's conjecture. This provides an approach to proving Green's conjecture for general curves of a fixed gonality using induction and was one of Aprodu's main motivations for formulating Theorem \ref{AP1}. \medskip

It is natural to ask for a converse of this result, i.e.\ to find an assumption on $D$ as above to guarantee that $C$ satisfies Green's Conjecture. We recall from \cite{SSW}, \cite{lin-syz}, \cite{bopp-schreyer}, that Schreyer has stated the following strengthening of Green's Conjecture:
\begin{conjecture}[Schreyer's Conjecture]\label{schrconj1}
Let $C$ be a curve of genus $g$ and non-maximal gonality $3\leq k \leq \frac{g+1}{2}$. Assume $W^1_k(C)=\{A\}$ is a reduced single point and $A$ is the unique line bundle of degree at
most $g-1$ achieving the Clifford index. Then
$$b_{g-k,1}(C,\omega_C)=g-k .$$
\end{conjecture}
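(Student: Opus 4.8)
The plan is to realise $C$ inside the rational normal scroll it lies on and to prove that its extremal linear syzygies are precisely the syzygies of that scroll; the same circle of ideas underlies the exact count of Theorem~\ref{general-schreyer}. Write $C \seq \PP^{g-1} = \PP(H^0(\omega_C)^{\vee})$ for the canonical embedding and let $X = X_{\omega_C}$ be the $(k-1)$-dimensional scroll swept out by the spans $\mathrm{Span}(D)$, $D \in |A|$. Since $X$ is cut out by the $2 \times 2$ minors of a $2 \times (g-k+1)$ matrix of linear forms, the Eagon--Northcott complex resolves its ideal and yields $b_{g-k,1}(X,\OO_X(1)) = g-k$ together with $b_{p,1}(X,\OO_X(1)) = 0$ for $p > g-k$. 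It therefore suffices to prove that restriction of syzygies gives an isomorphism
$$\rho \; : \; K_{g-k,1}(X,\OO_X(1)) \xrightarrow{\sim} K_{g-k,1}(C,\omega_C),$$
since this simultaneously produces Green's vanishing $b_{g-k+1,1}(C,\omega_C)=0$ and the predicted value $b_{g-k,1}(C,\omega_C)=g-k$. Injectivity of $\rho$ is the standard fact that restriction of top linear syzygies is injective once the subvariety is non-degenerate, so everything comes down to surjectivity: every nonzero $\alpha \in K_{g-k,1}(C,\omega_C)$ must be shown to come from the scroll, i.e.\ $X \seq \mathrm{Syz}(\alpha)$.

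For surjectivity I would invoke the characterisation of syzygy schemes by projection, Theorem~\ref{intro-syz-scheme}. Fixing a finite spanning set $Z \seq C$ of general points, so that each $pr_x(C) \seq \PP^{g-2}$ is linearly normal and non-degenerate, that theorem gives
$$\mathrm{Syz}(\alpha) \;=\; \bigcap_{x \in Z} \mathrm{Cone}_x\!\big(\mathrm{Syz}(pr_x(\alpha))\big).$$
Since $X \seq \mathrm{Cone}_x(pr_x(X))$ tautologically, it is enough to show that for each $x \in Z$ with $pr_x(\alpha) \neq 0$ one has $\mathrm{Syz}(pr_x(\alpha)) \supseteq pr_x(X)$. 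Now $pr_x(C)$ is $C$ re-embedded by the special line bundle $\omega_C(-x)$ of degree $2g-3$, still carrying the pencil $|A|$; $pr_x(X)$ is the scroll of $(pr_x(C),|A|)$; and $pr_x(\alpha)$ lies in the \emph{top} linear slot $K_{g-k-1,1}$ for this embedding. Hence ``$pr_x(\alpha)$ comes from the scroll of $pr_x(C)$'' is the analogue, one projection lower, of the statement we are trying to prove, and the argument becomes an induction (on the dimension of the ambient projective space, or on the genus). The inductive step is where the converse to Theorem~\ref{AP1} (Aprodu's Projection Theorem) enters: knowing the scroll picture on a canonical curve of genus $g$ should feed back through this converse to force $\mathrm{Syz}(pr_x(\alpha)) = pr_x(X)$, and it is precisely the hypothesis that $W^1_k(C)$ is a reduced point with $A$ the unique Clifford-index computing bundle of degree $\leq g-1$ that prevents the projected syzygy scheme from being a \emph{second} scroll --- compare the genus $2k-1$ example of the introduction, where the extra pencil $B$ produces a second scroll and the equality genuinely fails. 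The induction is anchored either in low genus, where Schreyer's equality is known by explicit computation, or, after specialising $C$ to a general $k$-gonal curve, in a nodal degeneration: one identifies two points of a fibre of the minimal pencil on a general $k$-gonal curve of genus $g-1$, propagates the exact value from genus $g-1$ to genus $g$ across the node via the converse to Aprodu, and then uses upper semicontinuity of $b_{g-k,1}$ to conclude the bound for the generic smooth $k$-gonal curve of genus $g$ (and, with more room, for $L$ of any sufficiently high degree, which is the content of Theorem~\ref{general-schreyer}).

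The main obstacle is the step ``$\mathrm{Syz}(pr_x(\alpha)) \supseteq pr_x(X)$'' for a curve satisfying merely the hypotheses of Conjecture~\ref{schrconj1} rather than a general one. Upper semicontinuity controls only the generic member of a family, so for a non-generic $C$ one must bound the syzygies of \emph{every} projected curve $pr_x(C)$ directly; this calls for a form of the converse to Aprodu uniform in the curve, together with a classification of the syzygy schemes of top linear syzygies fine enough to exclude everything but the scroll. It is exactly this difficulty that restricts the results obtained here to settings where such extra control is available: general curves with $L$ of high degree (Theorem~\ref{general-schreyer}), general covers of elliptic curves (Theorem~\ref{intro-elliptic}), and curves of even genus and maximal gonality (Theorem~\ref{green-even}), the latter two exploiting the additional structure of the cover $f : C \to E$ and the bound $\dim W^1_{n+1}(C) \leq 1$ of \cite{FHL}.
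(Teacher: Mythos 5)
The statement you are asked about is not a theorem of this paper: it is Schreyer's Conjecture, which the paper records as an open conjecture (proved elsewhere only under the ``bpf-linear growth'' genericity hypothesis of \cite{lin-syz}) and never claims to prove in the stated generality. So there is no proof in the paper to compare against, and your proposal does not supply one either --- you correctly identify the fatal gap yourself in your final paragraph. The substance of your sketch is a faithful outline of the machinery the paper \emph{does} deploy for its actual results (Eagon--Northcott for the scroll, injectivity of restriction, the reduction of surjectivity to $X \seq \mathrm{Syz}(\alpha)$, Theorem \ref{intro-syz-scheme}, Eisenbud--Popescu, and nodal degeneration plus semicontinuity), but those tools only control the \emph{general} $k$-gonal curve, whereas the conjecture concerns an arbitrary curve satisfying the stated Brill--Noether hypotheses. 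That is precisely why the paper proves Theorem \ref{general-schreyer} for general curves and high-degree $L$, and Corollary \ref{g=2k-1} only in the submaximal-gonality case $g=2k-1$ (where Hirschowitz--Ramanan makes the divisor-class computation available), rather than the conjecture itself.

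Two further points in your inductive scheme would fail even for a general curve. First, your downward induction by inner projection from points of the canonical curve is not supported by the paper's technology: Proposition \ref{1-by-1} propagates the extremal Betti number \emph{upward} in degree from a base case at $\deg L = 2g$ (Theorem \ref{first-step}), and the whole Section 2 setup requires $h^1(L-A)=0$, which fails for $L=\omega_C$ and its twists down by points (indeed $h^1(\omega_C-A)=h^0(A)=2$), so the projected syzygy $pr_x(\alpha)$ does not land in a slot where Eisenbud--Popescu identifies the syzygy scheme with a scroll. Second, the converse to Aprodu's Projection Theorem (Theorem \ref{intro-converse}) runs from a \emph{nodal} curve of genus $g+1$ satisfying Schreyer's equality to the \emph{vanishing} $K_{g+1-k,1}(C,\omega_C)=0$ for its smooth normalization; it produces Green's Conjecture, not the exact value $b_{g-k,1}=g-k$, so it cannot serve as the engine that propagates Schreyer's equality from genus $g-1$ to genus $g$ as your sketch requires.
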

Note that the condition that $W^1_k(C)$ is reduced is equivalent to demanding $h^0(C,A^{\otimes 2})=3$.\medskip

Schreyer's conjecture has been proven under the ``bpf-linear growth'' genericity assumption in \cite{lin-syz}. As explained earlier, Schreyer's conjecture implies both Green's Conjecture $b_{g-k+1,1}(C,\omega_C)=0 $ and, further, implies that all syzygies at the end of the $2$-linear strand of the canonical curve come from the scroll $X_{\omega_C}$. In terms of syzygy schemes, Schreyer's conjecture is equivalent to the statement that $\text{Syz}(\alpha)=X_{\omega_C}$ for all $\alpha \neq 0 \in K_{g-k,1}(C,\omega_C)$. \medskip

Our partial converse to the Aprodu Projection Theorem then reads:
\begin{thm} \label{intro-converse}
Let $D$ be the $1$-nodal, $k$-gonal curve as above, with normalization the smooth curve $C$ of genus $g$ and line bundle $B \in W^1_k(D)$ satisfying $\nu^*B \simeq A$.
Assume \begin{enumerate}
\item $h^0(D,B^{\otimes 2})=3$, and
\item  $b_{g+1-k,1}(D,\omega_D)=g+1-k$. 
\end{enumerate}
Then $K_{g+1-k,1}(C,\omega_C)=0$.
\end{thm}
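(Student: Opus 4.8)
The plan is to run the projection-of-syzygies machinery ``in reverse''. We are given the nodal curve $D$ with normalization $\nu: C \to D$, and $D$ is obtained from $C$ by gluing two general points $x,y \in C$. We have $\omega_D = \nu^*\omega_D$ relates to $\omega_C(x+y)$ in the standard way, and a codimension-one subspace arises geometrically: the node of $D$ corresponds to a point in $\PP(H^0(D,\omega_D)^{\vee})$, and projecting from that node realizes the canonical model of $C$ (embedded by $\omega_C(x+y)$, or a suitable sub-linear-system) as $pr(D)$. The first step is therefore to set up the precise relationship between $K_{g+1-k,1}(D,\omega_D)$ and the syzygies of $C$ under projection from the node, using the projection map $pr_x: K_{i,j}(D,\omega_D) \to K_{i-1,j}(\Gamma_D(\omega_D),V)$ from Section 0, together with projective normality of both $D$ and its projection (which follows since $C$ is a curve of the relevant Clifford index, by Green--Lazarsfeld / Petri-type results, once $g \geq 2k-1$).

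The heart of the argument should be a contradiction argument combined with Theorem~\ref{intro-syz-scheme} (the syzygy scheme characterization). Suppose for contradiction that $K_{g+1-k,1}(C,\omega_C) \neq 0$; equivalently, by Green's Conjecture failing on $C$, $C$ carries ``too many'' minimal pencils or an unexpected syzygy. One then lifts a nonzero $\beta \in K_{g+1-k,1}(C,\omega_C)$, or rather a nonzero syzygy on an appropriate linear section/extension, to produce extra linear syzygies on $D$ at spot $g+1-k$: the expectation is that the $g+1-k$ syzygies coming from the scroll $X_{\omega_D}$ associated to $B \in W^1_k(D)$ (which exist and are independent because $D$ is $k$-gonal with $h^0(B^{\otimes 2})=3$, so the scroll is $2$-dimensional of the expected type and its Eagon--Northcott complex contributes exactly $g+1-k$ syzygies) are then \emph{strictly fewer} than $b_{g+1-k,1}(D,\omega_D)$. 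This contradicts hypothesis (2), which pins $b_{g+1-k,1}(D,\omega_D)$ to exactly $g+1-k$. To make this precise one uses Theorem~\ref{intro-syz-scheme}: the syzygy scheme of every nonzero $\alpha \in K_{g+1-k,1}(D,\omega_D)$ must be recoverable as an intersection of cones over syzygy schemes of projections $pr_z(\alpha)$ for $z$ ranging over a spanning set, and hypothesis (2) forces all these syzygy schemes to be $X_{\omega_D}$; projecting from the node then forces the corresponding syzygy scheme on $C$ to be the scroll $X_{\omega_C}$ associated to $A = \nu^*B$, which is incompatible with the existence of a nonzero class in $K_{g+1-k,1}(C,\omega_C)$ (since that class would have syzygy scheme strictly larger than, or transverse to, the scroll).

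Concretely, the steps in order: (i) record the exact sequence relating $H^0(D,n\omega_D)$, $H^0(C,n\omega_C(n(x+y)))$ and the Koszul complexes, and identify projection from the node with the map $pr_x$ of the introduction; (ii) verify the projective-normality hypotheses needed so that $pr_x$ lands in $K_{i-1,j}(pr_x(D),\mathcal{O}(1))$ rather than merely in $K_{i-1,j}(\Gamma_D(\omega_D),V)$; (iii) identify $pr_{\text{node}}(D)$ with the curve $C$ under $\omega_C(x+y)$ (or the sub-system $|\omega_C|$ after a further projection), and identify the scroll $X_{\omega_D}$ with the cone over $X_{\omega_C}$ along the node using $h^0(B^{\otimes 2})=3$ from hypothesis (1); (iv) assume $K_{g+1-k,1}(C,\omega_C)\neq 0$, and use Theorem~\ref{intro-syz-scheme} applied on $D$ together with the converse direction — if every $\alpha$ on $D$ had $\text{Syz}(\alpha)=X_{\omega_D}$, the projections $pr_x(\alpha)$ would all have syzygy scheme the scroll on $C$, so no nonzero syzygy of $C$ outside the scroll can lift; (v) conclude that a nonzero $\beta \in K_{g+1-k,1}(C,\omega_C)$ produces, via the section $H^0(C,\omega_C) \hookrightarrow H^0(D,\omega_D)$-type splitting and the surjectivity part of Aprodu--Nagel's Prop.~2.14 run in reverse, a syzygy on $D$ whose scheme is \emph{not} contained in $X_{\omega_D}$, contradicting (2).

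The main obstacle will be step (v): running the projection correspondence backwards. Aprodu's Projection Theorem and Prop.~2.14 of Aprodu--Nagel give us maps \emph{from} $D$ \emph{to} $C$; producing a syzygy on $D$ out of one on $C$ requires a genuine lifting statement, which is where the hypothesis $b_{g+1-k,1}(D,\omega_D)=g+1-k$ (rather than a mere inequality) must be used essentially — it says the space of syzygies on $D$ is \emph{exactly} spanned by the scroll syzygies, so any additional syzygy-theoretic data on $C$ that does not already come from $A$'s scroll would overfill this space. Making the dimension count tight — showing the scroll on $C$ and a hypothetical extra class $\beta$ contribute \emph{independently} after lifting to $D$ — is the delicate point, and will likely require the reduced-ness input $h^0(D,B^{\otimes 2})=3$ to rule out the scroll degenerating and absorbing $\beta$. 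I expect the rest (projective normality, identification of cones, the standard exact sequences) to be routine given the results quoted in the introduction.
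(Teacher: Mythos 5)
Your guiding intuition --- that hypothesis (2) leaves ``no room'' in $K_{g+1-k,1}(D,\omega_D)$ for anything originating on $C$ --- is the right one, but the proposal has a genuine gap exactly where you flag it, at step (v), and the gap is not closable by the tools you name. There is no way to ``run Prop.\ 2.14 of Aprodu--Nagel in reverse'': that result produces projections of syzygies, never lifts. The mechanism the paper uses instead is a Lefschetz-type restriction from a cone: one identifies $K_{p,1}(D,\omega_D)\simeq K_{p,1}(C,\omega_C(x+y))$, realizes the desingularized cone $\widetilde{Z}\simeq \PP(\mathcal{O}_C\oplus\omega_C)$ over the canonical model of $C$ with vertex at the node, computes $\mathcal{O}_{\widetilde{Z}}(D')\simeq\mathcal{O}_{\widetilde{Z}}(\mathcal{H})\otimes\iota^*\mathcal{O}_C(x+y)$, and extracts from the resulting sequence of graded modules the exact sequence
$$0\to K_{p,1}(C,\omega_C)\to K_{p,1}(D,\omega_D)\xrightarrow{\ \delta\ }K_{p-1,1}(\widetilde{Z},-\iota^*(x+y);\mathcal{H})$$
(Proposition \ref{proj-map}). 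This is precisely the ``lifting statement'' you identify as missing: it exhibits $K_{p,1}(C,\omega_C)$ as a subspace of $K_{p,1}(D,\omega_D)$, namely $\ker\delta$. Without it, a nonzero $\beta$ on $C$ produces nothing on $D$ and your contradiction never gets started; the naive vector-space inclusion $H^0(\omega_C)\seq H^0(\omega_D)$ does not by itself give injectivity on Koszul cohomology, since $D$ has more Koszul boundaries than $C$.

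Second, even granting that injection, the dimension count you propose is not sufficient. Hypothesis (2), together with injectivity of restriction from the scroll (the argument of Lemma 4.4 of \cite{lin-syz}), shows the scroll syzygies span all of $K_{g+1-k,1}(D,\omega_D)$; but the classes coming from $C$ live at the \emph{same} spot $p=g+1-k$ inside that same space, so nothing ``overfills''. A hypothetical nonzero element of $\ker\delta$ would simply \emph{be} a combination of scroll syzygies, and what must be proved is that no nonzero such combination is killed by $\delta$, i.e.\ $\ker\delta=0$. The paper supplies this by running the identical cone construction for the scroll $X$ itself (cone $\widetilde{Y}$ over $\pi_p(X)$, strict transform $X'$ with $\mathcal{O}_{\widetilde{Y}}(X')\simeq\mathcal{O}_{\widetilde{Y}}(\mathcal{H}+\mathcal{R})$), using the Eagon--Northcott resolution to show the analogous boundary map $\Delta:K_{f-1,1}(X',\mathcal{H})\to K_{f-2,1}(\widetilde{Y},-\mathcal{R};\mathcal{H})$ is an isomorphism (Lemma \ref{proj-scroll}), and chasing a commutative diagram in which $\delta$ factors through $\Delta$ via injective comparison maps; this ingredient is entirely absent from your outline. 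Finally, Theorem \ref{intro-syz-scheme} is not used for this result (it serves the embedded-curve theorems of Section 2); a syzygy-scheme variant could in principle work --- a class restricted from the cone $Z$ over $C$ has syzygy scheme containing $Z$, which cannot be contained in the scroll for degree reasons --- but it would still require both missing ingredients above.
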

In other words, if the $k$-gonal nodal curve $D$ of genus $g+1$ satisfies Schreyer's Conjecture, then the smooth $k$-gonal curve $C$ of genus $g$ satisfies Green's Conjecture.\medskip

We now briefly explain how these technical results imply the main results. The key ingredients in the proof of Theorem \ref{general-schreyer} are Theorem \ref{intro-syz-scheme} together with the following important result of Eisenbud--Popescu \cite{eisenbud-popescu}:
\begin{thm} [Eisenbud--Popescu] 
Let $X \seq \PP^{r}$ be a rational normal scroll of degree $f$ and $0 \neq \alpha \in K_{f-1,1}(X,\mathcal{O}_X(1))$. Then $\text{Syz}(\alpha) =X$.
\end{thm}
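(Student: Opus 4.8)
The plan is to argue by induction on the degree $f$. The base case $f=2$ is immediate: then $X\seq\PP^r$ is a quadric hypersurface, $K_{1,1}(X,\mathcal{O}_X(1))$ is one-dimensional, spanned by the Koszul class of the defining quadric $q$, and $\text{Syz}(\alpha)=\{q=0\}=X$ for the unique nonzero $\alpha$. So fix $f\geq 3$, $0\neq\alpha\in K_{f-1,1}(X,\mathcal{O}_X(1))$, and assume the theorem for scrolls of degree $f-1$. The strategy is to recover $\text{Syz}(\alpha)$ from the projections $pr_x(\alpha)$ via Theorem~\ref{intro-syz-scheme}.

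First I would analyse the projection from a general point $x\in X$. Since $X$ is swept out by a pencil of linear spaces, its projection $pr_x(X)\seq\PP^{r-1}$ from a general $x\in X$ is again a rational normal scroll, of degree $f-1$; being of minimal degree it is linearly normal and non-degenerate, with $\text{codim}\,pr_x(X)=f-2$. Moreover $X$ spans $\PP^r$, so \cite[Prop.\ 2.14]{aprodu-nagel} (applied with $Z=X$) shows that $pr_x(\alpha)$ does not vanish identically in $x$; as the locus where it vanishes is closed, $pr_x(\alpha)\neq 0$ for $x$ in a dense open $U\seq X$. For $x\in U$ we have $0\neq pr_x(\alpha)\in K_{f-2,1}(pr_x(X),\mathcal{O}_{pr_x(X)}(1))$ with $f-2=\text{codim}\,pr_x(X)$, so the inductive hypothesis yields $\text{Syz}(pr_x(\alpha))=pr_x(X)$.

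Next I would prove the elementary geometric fact that, for $X\seq\PP^r$ irreducible, non-degenerate with $\deg X\geq 3$, one has $\bigcap_{x\in U}\text{Cone}_x(pr_x(X))=X$ for any dense open $U\seq X$. The inclusion $X\seq\bigcap_x\text{Cone}_x(pr_x(X))$ is clear. For the converse, suppose $z\notin X$ lies in every cone. Then for general $x\in X$ the line $\overline{xz}$ either meets $X$ at a second point or is tangent to $X$ at $x$. If the tangency alternative holds on a dense subset, then $pr_z\colon X\dashrightarrow\PP^{r-1}$ is ramified everywhere, so $\dim pr_z(X)<\dim X$; but $X$ lies in the cone over $pr_z(X)$ with vertex $z$, forcing $X$ to be a cone with vertex $z\notin X$, which is absurd. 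If instead $\overline{xz}$ is a genuine secant for general $x$, then $pr_z|_X$ is at least two-to-one onto a non-degenerate $d$-fold ($d=\dim X$) in $\PP^{r-1}$, whence $\deg X=\deg(pr_z|_X)\cdot\deg pr_z(X)\geq 2(\deg X-1)$, i.e.\ $\deg X\leq 2$, again absurd. So $z\in X$, and by Noetherianity finitely many general points $x_1,\dots,x_N\in X$ suffice: $\bigcap_{i=1}^{N}\text{Cone}_{x_i}(pr_{x_i}(X))=X$.

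Finally I would assemble the argument. Pick a finite set $Z\seq X$ of sufficiently general points which contains $x_1,\dots,x_N$, spans $\PP^r$, and has each $pr_x(X)$ linearly normal and non-degenerate --- possible since all conditions fail only on proper closed subsets. Applying Theorem~\ref{intro-syz-scheme} to $X$ and $Z$, and using $\text{Syz}(pr_{x_i}(\alpha))=pr_{x_i}(X)$ from the second paragraph, we obtain
\[ \text{Syz}(\alpha)=\bigcap_{x\in Z}\text{Cone}_x\!\big(\text{Syz}(pr_x(\alpha))\big)\ \seq\ \bigcap_{i=1}^{N}\text{Cone}_{x_i}\!\big(pr_{x_i}(X)\big)=X, \]
and since $X\seq\text{Syz}(\alpha)$ always, equality holds. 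I expect the main obstacle to be the coordination of genericity conditions in the second paragraph --- the projection from $x$ must simultaneously be scrollar, linearly normal, non-degenerate, and nonzero on $\alpha$ --- together with the degree estimate underlying the geometric lemma; the argument adapts with only a bookkeeping change in the degree count when $X$ is itself a cone.
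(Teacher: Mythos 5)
This theorem is not proved in the paper at all: it is quoted from Eisenbud--Popescu \cite{eisenbud-popescu}, whose argument is algebraic, identifying the syzygy ideal of an extremal linear syzygy of a determinantal ideal by direct work with the Eagon--Northcott complex. Your proposal instead \emph{derives} the theorem from Theorem \ref{char-syz-proj} by induction on the degree, and the argument looks sound to me; since Theorem \ref{char-syz-proj} is proved using only Proposition \ref{AN-proj-syz} and linear algebra, there is no circularity, so you have in effect shown that the paper's projection machinery is strong enough to reprove the one external input it relies on. The two nontrivial ingredients both check out: the inner projection of a scroll of degree $f$ from a general point of itself is again a variety of minimal degree of degree $f-1$ swept by a pencil of codimension-one linear subspaces, hence a scroll (of rank at most $4$ as a quadric when $f=3$, which your base case covers), automatically linearly and projectively normal; and your geometric lemma that $\bigcap_x \text{Cone}_x(pr_x(X))=X$ for $\deg X\geq 3$ is a correct trisecant-type statement, with the tangential alternative excluded because a cone contains its vertex and the secant alternative excluded by the minimal-degree count $f\geq 2(f-1)$. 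Three small points deserve to be made explicit: that $pr_x(\alpha)$ lies in $K_{f-2,1}(pr_x(X),\mathcal{O}(1))$ rather than merely in $K_{f-2,1}(S(X),W_x)$ follows from Proposition \ref{AN-proj-syz}(ii) because $x\in X\seq \text{Syz}(\alpha)$; the nonvanishing of $pr_x(\alpha)$ on a dense open set needs the semicontinuity argument used in Proposition \ref{1-by-1} rather than a bare appeal to \cite[Prop.~2.14]{aprodu-nagel}, which only gives one such $x$; and when $X$ is a cone one should project from points away from the vertex, though as you note nothing else changes. Compared with the original, your route is more geometric and fits the paper's framework, at the cost of being restricted to scrolls, whereas Eisenbud--Popescu's computation applies to generic determinantal ideals beyond the minimal-degree case.
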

Theorem \ref{intro-elliptic} on Green's conjecture for elliptic curves is proving by combining the Aprodu Projection Theorem with an analysis of the Brill--Noether theory of covers $f: C \to E$ as above, along the lines of \cite{arbarello-cornalba}. Lastly, Theorem \ref{green-even} on Green's conjecture
for curves of even genus is an immediate corollary of Theorem \ref{intro-converse}. \medskip

\noindent {\bf Acknowledgments:} We thank Juliette Bruce for an interesting discussion on projecting syzygies. We thank Gavril Farkas for suggesting that our results could be applied to Green's Conjecture for curves of even genus and maximal gonality. The author was supported by NSF grant DMS-1701245.

\section{Projections of Syzygy Schemes}
The goal of this section is to prove a precise relationship between the syzygy scheme of a linear syzygy and that of its projection. We first need to recall the notion of a syzygy scheme. Let $X \seq \PP^n$ be a closed subscheme such that $X$ is non-degenerate, i.e.\ the restriction map
$$V:=H^0(\PP^n, \mathcal{O}_{\PP^n}(1)) \to H^0(X,\mathcal{O}_X(1))$$
is injective. We let $S(X)$ denote the homogeneous coordinate ring and let $K_{p,q}(S(X),V)$ denote the syzygies of $S(X)$ as a $\text{Sym}(V)=S(\PP^n)$ module. We have an isomorphism $$K_{p,1}(S(X),V) \simeq K_{p-1,2}(I_X,V),$$
where $I_X$ is the ideal of $X$, by \cite{aprodu-nagel}, Prop.\ 1.27, and further
$$K_{p-1,2}(I_X,V) = \text{Ker}(\bigwedge^{p-1}V \otimes (I_X)_2 \to \bigwedge^{p-2}V \otimes (I_X)_3).$$
If $Y \seq \PP^n$ is any closed subscheme containing $X$, then the inclusion $I_Y \seq I_X$ induces an inclusion
$$K_{p,1}(S(Y),V) \simeq K_{p-1,2}(I_Y,V) \xrightarrow{\text{res}^Y_X} K_{p-1,2}(I_X,V) \simeq K_{p,1}(S(X),V).$$

\begin{defin} [\cite{aprodu-nagel}, \S 3]
Let $\alpha$ be a nonzero element of $K_{p,1}(S(X),V)$. Then the syzygy scheme $\text{Syz}(\alpha) \seq \PP^n$ is defined to be the largest closed subscheme $Y \seq \PP^n$ containing $X$ such that $\alpha \in \text{Im}(\text{res}^Y_X)$.
\end{defin}

We recall some basic facts about syzygy schemes from \cite[\S 3]{aprodu-nagel}. Let $V$ be a vector space of dimension $n+1$ and identify $\PP^n \simeq \PP(V^{\vee}):=\text{Proj}(\text{Sym}(V))$. For any $x \in V^{\vee}$, let $W_x \seq V$ denote the kernel of $x: V \to \C$ and let $i_x: \bigwedge^p V \to \bigwedge^{p-1}W_x$ be the contraction mapping defined by
$$i_x(v_1 \wedge \ldots \wedge v_p) = \sum_i (-1)^i v_1 \wedge \ldots \wedge \hat{v_i} \wedge \ldots v_p \otimes x(v_i).$$
The following statement is \cite{aprodu-nagel}, Lemma 3.7 and Prop.\ 3.15.
\begin{prop} \label{AN-proj-syz}
$X \seq \PP(V^{\vee})$ be a non-degenerate, linearly-normal, projective variety.
\begin{enumerate}[label=(\roman*)]
\item Let $\alpha \in K_{p,1}(S(X),V)$ and let $\bar{\alpha} \in \text{Hom}(V^{\vee},\bigwedge^pV) \simeq \bigwedge^pV \otimes V$ be a representative for $\alpha$. Then
$Syz(\alpha)= \{ [x] \in \PP(V^{\vee}) \; | \; i_x(\bar{\alpha}(x))=0 \}$.

\item For any $x \in V^{\vee}$ let $pr_x: \PP(V^{\vee}) \to \PP(W_x^{\vee})$ denote the projection with center $x$. Suppose that both $X$ and $Y:=pr_x(X)$ are linearly normal as well as non-degenerate. Then for any nonzero $\alpha \in K_{p,1}(S(X),V)$, $x \in \text{Syz}(\alpha)$ if and only if there exists $\beta  :  W_x^{\vee} \to \bigwedge^{p-1}W_x$ such that we have a commutative diagram $$\begin{tikzcd}
V^{\vee} \arrow[r, "\bar{\alpha}"] \arrow[d]  & \bigwedge^p V \arrow[d, "i_x"]  \\
W_x^{\vee} \arrow[r,"\beta"] & \bigwedge^{p-1}W_x
\end{tikzcd}.$$

\end{enumerate}
\end{prop}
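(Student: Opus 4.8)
The plan is to prove (i) directly and then deduce (ii) from it by a short linear-algebra argument. For (i), the strategy is to pin down the quadrics ``cut out by $\alpha$'' and show that they generate the ideal of $\text{Syz}(\alpha)$. Recall the standard dictionary: the long exact sequence in Koszul cohomology attached to $0 \to I_X \to \operatorname{Sym}(V) \to S(X) \to 0$, together with the vanishing of $K_{p,q}(\operatorname{Sym}(V),V)$ for $(p,q) \neq (0,0)$, realizes $K_{p,1}(S(X),V) \simeq K_{p-1,2}(I_X,V)$ via the connecting homomorphism, compatibly with the inclusions $\text{res}^{Y'}_X$ for $Y' \supseteq X$. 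Since $X$ (hence $Y'$) is non-degenerate there are no Koszul coboundaries in position $(p-1,2)$, so under these identifications $\text{res}^{Y'}_X$ becomes the inclusion of cycle spaces $\bigwedge^{p-1}V \otimes (I_{Y'})_2 \hookrightarrow \bigwedge^{p-1}V \otimes (I_X)_2$. Unwinding the connecting map, a representative $\bar\alpha \in \bigwedge^p V \otimes V$ of $\alpha$ is sent to $\delta(\bar\alpha) \in \bigwedge^{p-1}V \otimes \operatorname{Sym}^2 V$, where $\delta$ is the Koszul differential, and the cycle condition defining $K_{p,1}(S(X),V)$ says precisely that $\delta(\bar\alpha)$ lands in the subspace $\bigwedge^{p-1}V \otimes (I_X)_2$.

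The first real step is to identify $\delta(\bar\alpha)$, regarded as a $\bigwedge^{p-1}V$-valued quadratic form on $V^\vee$ via the identification of $\operatorname{Sym}^2 V$ with quadratic forms on $V^\vee$, with the map $x \mapsto i_x(\bar\alpha(x))$, up to sign. Writing $\bar\alpha = \sum_j w^{(j)} \otimes v_j$ and using that $\bar\alpha(x) = \sum_j x(v_j)\, w^{(j)}$ under $\bigwedge^p V \otimes V \simeq \operatorname{Hom}(V^\vee, \bigwedge^p V)$, this is a direct term-by-term computation: I would evaluate the $\operatorname{Sym}^2 V$-component of $\delta(\bar\alpha)$ at the point $x \in V^\vee$ and compare with $i_x(\bar\alpha(x))$. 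I would also check independence of the representative --- replacing $\bar\alpha$ by $\bar\alpha + \delta(\omega)$ for $\omega \in \bigwedge^{p+1}V$ changes $i_x(\bar\alpha(x))$ by $i_x(i_x(\omega)) = 0$ --- and record that the cycle condition now reads: every coefficient quadric of $x \mapsto i_x(\bar\alpha(x))$ lies in $(I_X)_2$.

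Granting this, let $J \seq \operatorname{Sym}(V)$ be the ideal generated by the quadrics $\langle \ell, \delta(\bar\alpha) \rangle$, $\ell \in \bigwedge^{p-1}V^\vee$ --- equivalently, by the entries of $x \mapsto i_x(\bar\alpha(x))$ --- and set $Y := V(J)$, so that $X \seq Y$ and, scheme-theoretically, $Y = \{ [x] \in \PP(V^\vee) : i_x(\bar\alpha(x)) = 0 \}$. I claim $\text{Syz}(\alpha) = Y$. First, $\alpha \in \text{Im}(\text{res}^Y_X)$: by construction every component of $\delta(\bar\alpha)$ lies in $(I_Y)_2$, so $\delta(\bar\alpha) \in \bigwedge^{p-1}V \otimes (I_Y)_2$, and it remains a Koszul cycle there since $\delta^2 = 0$; hence it defines a class in $K_{p-1,2}(I_Y,V) \simeq K_{p,1}(S(Y),V)$ restricting to $\alpha$. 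Conversely, if $Y' \supseteq X$ is any subscheme with $\alpha \in \text{Im}(\text{res}^{Y'}_X)$, then, because $\text{res}^{Y'}_X$ is the inclusion of cycle spaces, the cycle $\delta(\bar\alpha)$ representing $\alpha$ must already lie in $\bigwedge^{p-1}V \otimes (I_{Y'})_2$; hence all the quadrics $\langle \ell, \delta(\bar\alpha) \rangle$ lie in $(I_{Y'})_2$, so $J \seq I_{Y'}$ and $Y' \seq Y$. This proves (i).

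Finally, (ii) follows formally from (i). The restriction map $V^\vee \to W_x^\vee$ is surjective with kernel $\C \cdot x$, and a basis computation (equivalently, exactness of the Koszul complex of the contraction $i_x$) shows that $i_x : \bigwedge^p V \to \bigwedge^{p-1}V$ has image exactly $\bigwedge^{p-1}W_x$. Therefore a linear map $\beta : W_x^\vee \to \bigwedge^{p-1}W_x$ completing the square displayed in (ii) exists if and only if $i_x \circ \bar\alpha$ annihilates $\C \cdot x$ --- its image landing in $\bigwedge^{p-1}W_x$ automatically --- i.e.\ if and only if $i_x(\bar\alpha(x)) = 0$, which by (i) means $x \in \text{Syz}(\alpha)$. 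The hypotheses that $pr_x(X)$ be linearly normal and non-degenerate are used only to guarantee that $K_{p-1,1}(S(pr_x(X)),W_x)$ is computed by maps $W_x^\vee \to \bigwedge^{p-1}W_x$, so that such a $\beta$ genuinely represents the projected syzygy $pr_x(\alpha)$. I expect the main obstacle to be the computation in the second paragraph: fixing the precise normalization of the connecting homomorphism and tracking signs carefully enough to see that $\delta(\bar\alpha)$, as a $\bigwedge^{p-1}V$-valued quadratic form on $V^\vee$, is literally $x \mapsto \pm i_x(\bar\alpha(x))$. Everything else is bookkeeping, with the point that we are in the linear-strand case $q = 1$ --- so $\delta(\bar\alpha)$ sits in degree $2$ and $\text{Syz}(\alpha)$ is visibly cut out by quadrics --- being what keeps the argument clean.
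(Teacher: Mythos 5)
The paper offers no proof of this proposition; it is quoted verbatim from Aprodu--Nagel (Lemma 3.7 and Prop.\ 3.15), so there is nothing internal to compare against. Your argument is correct and is essentially the standard one: the identification of $\delta(\bar\alpha)$, viewed as a $\bigwedge^{p-1}V$-valued quadratic form, with $x \mapsto \pm i_x(\bar\alpha(x))$ does check out term by term, the ideal $J$ generated by its coefficient quadrics is visibly the largest ideal $I_{Y'} \seq I_X$ with $\delta(\bar\alpha) \in \bigwedge^{p-1}V \otimes (I_{Y'})_2$, and your reduction of (ii) to (i) via the factorization criterion for $i_x \circ \bar\alpha$ through $V^{\vee}/\C x \simeq W_x^{\vee}$ is exactly right. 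One small caveat: what the paper actually uses downstream is the \emph{rephrased} version of (ii) --- that $x \in \text{Syz}(\alpha)$ iff $pr_x(\alpha)$ lies in the subspace $K_{p-1,1}(S(pr_x(X)),W_x)$ --- and that requires the additional verification that $\delta(\beta)$ has coefficients in $(I_{pr_x(X)})_2$, which you only gesture at; but that extra step is not part of the statement as posed, so your proof of the proposition itself is complete.
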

Part $(ii)$ of the above proposition can be rephrased. In the notation of the proposition, assume $X$ and $Y$ are linearly normal and non-degenerate. Recall from \cite[\S 2.2.1]{aprodu-nagel} that there is a ``projection map'' $$pr_x: K_{p,1}(S(X),V) \to K_{p-1,1}(S(X),W_x).$$ Then one can rephrase Proposition \ref{AN-proj-syz} $(ii)$ as $x \in \text{Syz}(\alpha)$ if and only if $pr_x(\alpha) \in K_{p-1,1}(S(Y),W_x)$.

The next result provides an improvement of \cite[Lemma 3.17(ii)]{aprodu-nagel}.
\begin{thm} \label{char-syz-proj}
Let $X \seq \PP(V^{\vee})$ be an integral, projective variety and let $Z \seq X$ be a set such that $\text{Span}(Z)=\PP(V^{\vee})$. Assume that for all $x \in Z$, both $X$ and $pr_x(X) \seq \PP(W_x^{\vee})$ are linearly normal and non-degenerate. Let $\alpha \neq 0 \in K_{p,1}(S(X),V)$. Then
$$\text{Syz}(\alpha) = \bigcap_{x \in Z} \text{Cone}_x\left( \text{Syz}(pr_x(\alpha)) \right),$$
where, for any $Y \in \PP(W_x^{\vee})$, $Cone_x(Y) \seq \PP(V^{\vee})$ denotes the cone with vertex $x$.
\end{thm}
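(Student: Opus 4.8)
The plan is to prove both inclusions separately, leveraging Proposition \ref{AN-proj-syz} to translate everything into the linear-algebraic condition $i_x(\bar\alpha(x))=0$. Fix a representative $\bar\alpha : V^\vee \to \bigwedge^p V$ of $\alpha$.

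\emph{The inclusion $\subseteq$.} Let $[z] \in \text{Syz}(\alpha)$ be arbitrary (not just a point of $Z$). I want to show $[z] \in \text{Cone}_x(\text{Syz}(pr_x(\alpha)))$ for every $x \in Z$. The containment $pr_x(\text{Syz}(\alpha)) \subseteq \text{Syz}(pr_x(\alpha))$ recalled in the introduction (from \cite[\S 3]{aprodu-nagel}) says precisely that $pr_x([z]) \in \text{Syz}(pr_x(\alpha))$ whenever $[z] \neq x$; and since $\text{Cone}_x(Y)$ is by definition $pr_x^{-1}(Y) \cup \{x\}$ (the union of all lines through $x$ meeting $Y$, together with $x$ itself), this gives $[z] \in \text{Cone}_x(\text{Syz}(pr_x(\alpha)))$ directly, with the point $x$ itself lying in the cone trivially. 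Intersecting over all $x \in Z$ yields $\text{Syz}(\alpha) \subseteq \bigcap_{x \in Z} \text{Cone}_x(\text{Syz}(pr_x(\alpha)))$. This direction uses nothing beyond the already-known functoriality of syzygy schemes under projection, so I expect it to be short.

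\emph{The inclusion $\supseteq$.} This is the substantive direction. Let $[z] \in \PP(V^\vee)$ lie in $\text{Cone}_x(\text{Syz}(pr_x(\alpha)))$ for every $x \in Z$; I must show $i_z(\bar\alpha(z)) = 0$ in $\bigwedge^{p-1} W_z$, i.e. $[z] \in \text{Syz}(\alpha)$ by Proposition \ref{AN-proj-syz}(i). The idea is to test the element $i_z(\bar\alpha(z)) \in \bigwedge^{p-1}W_z \subseteq \bigwedge^{p-1}V$ against functionals coming from the points of $Z$. Concretely, for $x \in Z$ with $x \neq z$, the hypothesis $[z] \in \text{Cone}_x(\text{Syz}(pr_x(\alpha)))$ means $pr_x([z]) \in \text{Syz}(pr_x(\alpha))$, which by Proposition \ref{AN-proj-syz}(i) applied on $\PP(W_x^\vee)$ (where $pr_x(X)$ is linearly normal and non-degenerate by hypothesis) and by the commutative-diagram reformulation in Proposition \ref{AN-proj-syz}(ii) translates into a vanishing condition relating $\bar\alpha$, the contraction $i_x$, and the image point $pr_x(z)$. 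The key computation will be to combine $i_x$ and $i_z$: using the standard identity $i_x \circ i_z + i_z \circ i_x = 0$ on $\bigwedge^\bullet V$ (up to sign), one deduces that $i_x(i_z(\bar\alpha(z)))$ is controlled by the syzygy-scheme condition for $pr_x(\alpha)$ at $pr_x(z)$, hence vanishes for all $x \in Z$. Since $Z$ spans $\PP(V^\vee)$, the functionals $\{x\}_{x \in Z}$ span $V$, so the contraction maps $i_x$ for $x$ ranging over a spanning set of $V^\vee$ are jointly injective on $\bigwedge^{p-1}W_z$ (an element of $\bigwedge^{p-1}V$ killed by contraction with every linear functional in a spanning set is zero). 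Therefore $i_z(\bar\alpha(z)) = 0$, giving $[z] \in \text{Syz}(\alpha)$.

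\emph{Main obstacle.} The delicate point is the bookkeeping in the previous paragraph: turning the geometric statement "$pr_x(z) \in \text{Syz}(pr_x(\alpha))$" into the precise algebraic vanishing of $i_x i_z(\bar\alpha(z))$ requires carefully identifying which representative of $pr_x(\alpha)$ one gets from $\bar\alpha$ under the projection map $K_{p,1}(S(X),V) \to K_{p-1,1}(S(X),W_x)$, and matching the evaluation "$\bar\alpha(z)$" with "$\overline{pr_x(\alpha)}(pr_x(z))$" through the quotient $V \twoheadrightarrow W_x$ and the splitting used to define the projection. One must also handle the edge cases $z = x$ (trivial, $x$ lies in its own cone) and $z$ lying on the center-locus where $pr_x$ is undefined (here $z = x$ since the center of $pr_x$ is the single point $x$). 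Once the identification $i_x(i_z(\bar\alpha(z))) = \pm i_{pr_x(z)}(\overline{pr_x(\alpha)}(pr_x(z)))$ (read inside $\bigwedge^{p-2}W_x \cap \ldots$, suitably interpreted) is nailed down, the joint-injectivity of contractions against a spanning set closes the argument cleanly.
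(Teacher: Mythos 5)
Your proposal is correct and takes essentially the same route as the paper: the forward inclusion via the compatibility of syzygy schemes with projection (the commutative diagram extending Proposition \ref{AN-proj-syz}(ii)), and the reverse inclusion by deducing $i_x(i_z(\bar\alpha(z)))=0$ for all $x \in Z$ and then using that an element of $\bigwedge^{p-1}V$ killed by contraction against a spanning set of $V^{\vee}$ must vanish. The representative-matching you flag as the main obstacle is exactly what the paper's three-column commutative diagram resolves, so no new idea is needed beyond what you describe.
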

\begin{proof}
Let $x \in Z \seq \text{Syz}(\alpha)$. By Proposition \ref{AN-proj-syz} $(ii)$, for any $y \in V^{\vee}$ we have a commutative diagram
$$\begin{tikzcd}
V^{\vee} \arrow[r, "\overline{\alpha}"] \arrow[d, "f"]  & \bigwedge^p V \arrow[d, "i_x"] \arrow[r,"i_y"] & \bigwedge^{p-1}V \arrow[d, "-i_x"]   \\
W_x^{\vee} \arrow[r,"\overline{\beta}"] & \arrow[r, "i_{f(y)}"] \bigwedge^{p-1}W_x & \bigwedge^{p-2}W_x,
\end{tikzcd}$$
where $\overline{\alpha}$ resp.\ $\overline{\beta}$ represent $\alpha$ resp.\ $pr_x(\alpha)$ and $f:=pr_x$ is the usual projection map of vector spaces. By Proposition \ref{AN-proj-syz} $(i)$, $y \in \text{Syz}(\alpha)$ if and only if $i_y(\overline{\alpha}(y))=0$. Hence if $y \in \text{Syz}(\alpha)$, then 
$$i_{f(y)} \left( \overline{\beta} (f(y)) \right)=0,$$ 
and hence $pr_x(y)=f(y) \in \text{Syz}(pr_x(\alpha))$, by Proposition \ref{AN-proj-syz} once again. Thus $y \in \text{Cone}\left(\text{Syz}(pr_x(\alpha))\right)$. Hence we have established the inclusion $$\text{Syz}(\alpha) \seq \bigcap_{x \in Z} \text{Cone}_x\left( \text{Syz}(pr_x(\alpha)) \right).$$

For the reverse inclusion, suppose $y \in \bigcap_{x \in Z} \text{Cone}_x\left( \text{Syz}(pr_x(\alpha)) \right)$, or, equivalently $f(y) \in \text{Syz}(pr_x(\alpha))$ for all $x \in Z$. Then $$i_x(i_y(\overline{\alpha}(y)))=0, \text{ for all $x \in Z$}.$$ By Proposition \ref{AN-proj-syz}, we need to show $i_y(\overline{\alpha}(y))=0$. Since $Z$ spans $\PP(V^{\vee})$, it suffices to observe that, for any nonzero $u \in \bigwedge^{p-1}V$
\begin{enumerate}[label=(\roman*)]
\item There is some $z \in V^{\vee}$ such that $i_z(u) \neq 0$.
\item The set of $z \in V^{\vee}$ such that $i_z(u)=0$ forms a subspace.
\end{enumerate}
Indeed, $(ii)$ is obvious, whereas for $(i)$ we observe that $i_{z}(u)=0$ if and only if $u \in \bigwedge^{p-1}W_z$ by \cite{aprodu-nagel}, Remark 1.3. Hence $(i)$ follows from the trivial observation that there exists a codimension one subspace $W \seq V$ with $u \notin \bigwedge^{p-1}W$.
\end{proof}

\section{Extremal Syzygies of Embedded Curves}
In this section we will prove \cite{lin-syz}, Conjecture 0.5, which states that all extremal syzygies of a general curve of non-maximal gonality embedded by a complete linear system of sufficiently high degree arise from a scroll. Let $C$ be a smooth $k$-gonal curve of genus $g \geq 2k-1$, for $k \geq 2$ and $L \in \text{Pic}(C)$ a line bundle. If $C$ is sufficiently general, then there exists a \emph{unique} line bundle $A$ with $\deg(A)=k$, $h^0(A) \geq 2$, \cite{arbarello-cornalba}. Further, for such a general $k$-gonal curve, $h^0(A^2)=3$ or, equivalently, the Brill--Noether locus $W^1_k(C)$ is smooth. Assume further $h^1(L-A)=0$. Consider the embedding $\phi_L: C \hookrightarrow \PP^{r(L)}$ for $r(L):=h^0(L)-1$ and the scroll
$$X_L:=\bigcup_{D \in |A|} \text{Span}(D) \seq \PP^{r(L)},$$
\cite{schreyer1}. The scroll $X_L$ has degree $r(L)+1-k$ in $\PP^{r(L)}$ and has Betti numbers
$$b_{p,1}(X_L,\mathcal{O}_{X_L}(1))=p \binom{r(L)+1-k}{p+1}$$
whereas $b_{p,q}(X_L ,\mathcal{O}_{X_L}(1))=0$ for $q \geq 2$. As seen in the previous section, we have an inclusion
$$res_C \; : \; K_{p,1}(X_L,\mathcal{O}_{X_L}(1)) \hookrightarrow K_{p,1}(C,L).$$
Conjecture 0.6 from \cite{lin-syz} states that if $\deg(L) \geq 2g+k$ then $res_C$ is surjective in the extremal case $p=r(L)-k$, which is the largest value of $p$ such that $b_{p,1}(X_L ,\mathcal{O}_{X_L}(1)) \neq 0$. Note that the surjectivity of $res_C : K_{r(L)-k,1}(X_L,\mathcal{O}_{X_L}(1)) \hookrightarrow K_{r(L)-k,1}(C,L)$ is equivalent to the statement
\begin{equation} \label{inc-syz}
X_L \seq \text{Syz}(\alpha) \; \; \text{for all $\alpha \in K_{r(L)-k,1}(C,L)$}.
\end{equation}
We will prove equation (\ref{inc-syz}) using Proposition \ref{char-syz-proj}, together with the following important result of Eisenbud--Popescu.
\begin{thm} [\cite{eisenbud-popescu}] \label{EP}
Let $X \seq \PP^{r}$ be a rational normal scroll of degree $f$ and $0 \neq \alpha \in K_{f-1,1}(X,\mathcal{O}_X(1))$. Then $\text{Syz}(\alpha) =X$.
\end{thm}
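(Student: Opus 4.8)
The natural approach is to argue by induction on the degree $f$, using the characterization of syzygy schemes via their projections just established (Theorem~\ref{char-syz-proj}) together with Proposition~\ref{AN-proj-syz}. The base case $f=2$ is immediate: then $X$ is a quadric hypersurface (or a plane conic), $K_{1,1}(X,\mathcal{O}_X(1))$ is spanned by the defining quadric $q$, and $\text{Syz}(\alpha)=V(q)=X$ directly from the definition. For the inductive step, fix a scroll $X\seq\PP^r$ of degree $f\geq 3$ and a nonzero $\alpha\in K_{f-1,1}(X,\mathcal{O}_X(1))$; by the Eagon--Northcott resolution this space is $(f-1)$-dimensional and sits at the top of the linear strand.

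First I would record how the scroll behaves under projection. For a general, hence smooth, point $x\in X$, the projection $pr_x\colon X\dashrightarrow pr_x(X)$ is birational onto its image: a variety of minimal degree has no trisecant lines, so $pr_x$ is injective away from the $(d{-}1)$-plane of the ruling through $x$, a locus of dimension $<\dim X$; hence $\deg pr_x(X)=f-1$, and $pr_x(X)\seq\PP^{r-1}$ is a nondegenerate variety of minimal degree of dimension $d=\dim X$. By del Pezzo--Bertini it is therefore again a rational normal scroll (it is swept out by lines, so it is not a cone over the Veronese surface), now of degree $f-1$; being of minimal degree it is projectively, in particular linearly, normal. Thus every smooth point of $X$ satisfies the hypotheses of Theorem~\ref{char-syz-proj}.

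Now set $Z_0:=\{x\in X_{\mathrm{sm}}: pr_x(\alpha)\neq 0\}$. This set spans $\PP^r$: otherwise $X_{\mathrm{sm}}$ minus a hyperplane would be a spanning subset of $X$ on which $pr_x(\alpha)$ vanishes identically, contradicting \cite[Prop.~2.14]{aprodu-nagel}; and it is a nonempty Zariski-open, hence dense, subset of $X_{\mathrm{sm}}$, since non-vanishing of $pr_x(\alpha)$ is an open condition in $x$. For $x\in Z_0$, since $x\in X\seq\text{Syz}(\alpha)$, the reformulation of Proposition~\ref{AN-proj-syz}(ii) gives $pr_x(\alpha)\in K_{f-2,1}\bigl(pr_x(X),\mathcal{O}(1)\bigr)$; this class is nonzero, and $f-2$ is the top of the linear strand of the degree-$(f-1)$ scroll $pr_x(X)$, so the inductive hypothesis yields $\text{Syz}(pr_x(\alpha))=pr_x(X)$. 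Feeding this into Theorem~\ref{char-syz-proj},
$$\text{Syz}(\alpha)=\bigcap_{x\in Z_0}\text{Cone}_x\bigl(pr_x(X)\bigr).$$
Set-theoretically this intersection is $X$: $\text{Cone}_x(pr_x(X))$ is the secant cone of $X$ with vertex $x$, and if some $z\notin X$ lay in it for every $x$ in the dense set $Z_0$, then the projection of $X$ from $z$ would be generically at least $2$-to-$1$ onto its image, forcing $\deg X\geq 2(f-1)$ by a degree count (using that the image is a nondegenerate $d$-fold in $\PP^{r-1}$ and that $X$ has no trisecants), which is impossible for $f\geq 3$.

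The remaining — and, I expect, genuinely hard — point is to promote this to a scheme-theoretic equality. For this I would use Proposition~\ref{AN-proj-syz}(i): $\text{Syz}(\alpha)$ is cut out by the quadrics occurring in $\alpha$, which lie in $(I_X)_2=I_2(M)$ (with $M$ the $2\times f$ matrix of linear forms defining $X$); it therefore suffices to show that these quadrics span all of $(I_X)_2$, since then $I_{\text{Syz}(\alpha)}\supseteq(I_X)_2\cdot S=I_X$ and hence $\text{Syz}(\alpha)=X$. This spanning is a computation at the top of the Eagon--Northcott complex: under the identification $K_{f-1,1}(X)\cong\bigwedge^{f}F\otimes\mathrm{Sym}_{f-2}(G)^\vee$, with the differential given by full contraction against $M$, one traces a nonzero $\alpha$ into $K_{f-2,2}(I_X)\seq\bigwedge^{f-2}V\otimes(I_X)_2$ and checks that, writing $\alpha=\sum_i\omega_i\otimes q_i$ over a basis $\{q_i\}$ of the $2\times 2$ minors, the coefficients $\omega_i\in\bigwedge^{f-2}V$ are linearly independent whenever $\alpha\neq 0$. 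This last computation is essentially Eisenbud--Popescu's argument, and one could equally bypass the induction and conclude from it directly; a secondary technical point in the inductive route is making the birationality-and-degree statement for $pr_x(X)$ uniform, including when $X$ is itself a cone.
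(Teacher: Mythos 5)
The paper does not actually prove this statement: it is imported verbatim from Eisenbud--Popescu \cite{eisenbud-popescu} and used as a black box, so there is no internal proof to compare against. Judged on its own terms, your proposal contains a genuine gap --- one you have in fact diagnosed yourself. The set-theoretic half (induction on the degree via Theorem~\ref{char-syz-proj}, the fact that projecting a scroll from a general smooth point yields a linearly normal variety of minimal degree of degree $f-1$, and the secant-cone degree count showing $\bigcap_{x\in Z_0}\text{Cone}_x\bigl(pr_x(X)\bigr)=X$ as sets) is essentially sound. The one loose end there is that $z\in\text{Cone}_x\bigl(pr_x(X)\bigr)$ permits the line through $z$ and $x$ to be tangent to $X$ at $x$ rather than a genuine secant; you need the additional (easy, characteristic-zero) observation that if $z\in T_xX$ for a dense set of smooth $x$ then $pr_z|_X$ has everywhere degenerate differential and hence positive-dimensional fibres, impossible for $z\notin X$, so the secant case does hold on a dense open set and the $f\geq 2(f-1)$ contradiction applies.

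The real problem is the scheme-theoretic upgrade, which is not a ``remaining technical point'' but the entire content of the Eisenbud--Popescu theorem (their syzygetic Castelnuovo lemma). What must be shown is exactly what you assert without proof: that for every nonzero $\alpha=\sum_i\omega_i\otimes q_i$ at the top of the linear strand, the coefficients $\omega_i\in\bigwedge^{f-2}V$ are linearly independent, so that the quadrics involved in $\alpha$ span all of $(I_X)_2$ and hence $I_{\text{Syz}(\alpha)}\supseteq I_X$. You label this computation ``essentially Eisenbud--Popescu's argument'' and correctly observe that, once granted, it yields the theorem directly and renders the induction superfluous. That concession is fatal to the proposal as a proof: the projection machinery only recovers the set-theoretic equality, while the step that actually carries the theorem is asserted rather than established. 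To make this self-contained you would need to carry out the Eagon--Northcott computation explicitly --- identify $K_{f-1,1}(X,\OO_X(1))$ with $\bigwedge^{f}F\otimes\text{Sym}_{f-2}G^{\vee}$, write down the differential into $\bigwedge^{f-2}V\otimes(I_X)_2$ in terms of the $2\times f$ matrix of linear forms, and verify the independence of the $\omega_i$ for an arbitrary nonzero class, not merely a generic one. As it stands the argument reduces the theorem to itself.
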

The next proposition will be the key step in our proof.
\begin{prop} \label{1-by-1}
Let $C$ be a general $k$-gonal curve of genus $g \geq 2k-1$ and $k \geq 2$. Suppose $b_{r(L)-k,1}(C,L)=r(L)-k$ for some line bundle $L$ of with $h^1(L-A)=0$. Then
$b_{r(L)+1-k,1}(C,L(x))=r(L)+1-k$ for any $x \in C$.
\end{prop}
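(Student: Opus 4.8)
The plan is to deduce the statement about $L(x)$ from the statement about $L$ by relating the scroll $X_{L(x)} \subseteq \PP^{r(L)+1}$ to the scroll $X_L \subseteq \PP^{r(L)}$ via the projection from the point $x \in C$. First I would observe that since $h^1(L-A)=0$ and $\deg L$ is large, $h^1(L(x)-A)=0$ as well, so the scroll $X_{L(x)}$ is defined and $r(L(x))=r(L)+1$. The point $x$ lies on $C \subseteq \PP(H^0(L(x))^\vee)$, and projection $pr_x$ from $x$ maps $\phi_{L(x)}(C)$ isomorphically onto $\phi_L(C)$ (this is the standard fact that projecting a linearly normal embedding from a point of the variety recovers the embedding by the hyperplane subsystem vanishing at $x$; here it works because $h^1(L)=0$ so $L$ is still projectively normal and non-special). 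Moreover, $pr_x$ carries the scroll $X_{L(x)}$ onto the scroll $X_L$: the divisor $D_0 \in |A|$ through $x$ gets its span collapsed by one dimension, but since $X_{L(x)}$ has degree $r(L(x))+1-k = r(L)+2-k$ and $X_L$ has degree $r(L)+1-k$, the generic fibre behaves well and $pr_x(X_{L(x)}) = X_L$. Thus by Theorem \ref{EP} (Eisenbud--Popescu), for any nonzero $\alpha' \in K_{r(L)+1-k,1}(X_{L(x)},\sheafO(1))$ the projection $pr_x(\alpha')$ lands in $K_{r(L)-k,1}(X_L,\sheafO(1))$ and is nonzero (the number $r(L)+1-k$ equals $f-1$ for $X_{L(x)}$ where $f = \deg X_{L(x)}$).

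Next I would run the counting argument. We have $b_{r(L)-k,1}(C,L) = r(L)-k$ by hypothesis, and by the Eisenbud--Popescu/scroll computation $b_{r(L)-k,1}(X_L,\sheafO(1)) = (r(L)-k)\binom{r(L)+1-k}{r(L)-k+1} = r(L)-k$ as well; since $res_C : K_{r(L)-k,1}(X_L) \hookrightarrow K_{r(L)-k,1}(C,L)$ is injective and both sides have the same dimension, $res_C$ is an isomorphism there, i.e. equation (\ref{inc-syz}) holds at level $L$: $X_L \subseteq \text{Syz}(\alpha)$ for all $\alpha \in K_{r(L)-k,1}(C,L)$. Now take any nonzero $\beta \in K_{r(L)+1-k,1}(C,L(x))$. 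Since $x \in C$ and $C$ spans $\PP(H^0(L(x))^\vee)$, there is some point $z \in C$ with $pr_z(\beta) \neq 0$ (this is the standard nonvanishing-of-some-projection fact cited in the introduction from \cite[Prop.\ 2.14]{aprodu-nagel}); but in fact I want to use $z = x$ itself. The key claim is that $pr_x(\beta) \in K_{r(L)-k,1}(C,L)$ is nonzero — equivalently, by Proposition \ref{AN-proj-syz}(ii), that $x \in \text{Syz}(\beta)$ forces $\beta$ to come from a scroll. Here I would invoke Theorem \ref{char-syz-proj}: $\text{Syz}(\beta) = \bigcap_{z \in C} \text{Cone}_z(\text{Syz}(pr_z(\beta)))$, and combined with the $L$-level result I would argue that $pr_x(\beta)$, being a nonzero element of $K_{r(L)-k,1}(C,L)$, has syzygy scheme containing $X_L$, whence $\text{Syz}(\beta) \supseteq \text{Cone}_x(X_L) = X_{L(x)}$ (the cone over $X_L$ with vertex $x$ is exactly $X_{L(x)}$, since adjoining $x$ to the span of the fibre $D_0$ recovers the full span of $D_0$ in the larger projective space). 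Therefore $\beta$ restricts from $X_{L(x)}$, i.e. $res_C : K_{r(L)+1-k,1}(X_{L(x)},\sheafO(1)) \to K_{r(L)+1-k,1}(C,L(x))$ is surjective; since it is always injective and the source has dimension $(r(L)+1-k)\binom{r(L)+2-k}{r(L)+2-k} = r(L)+1-k$, we conclude $b_{r(L)+1-k,1}(C,L(x)) = r(L)+1-k$.

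One technical gap to fill: I need $pr_x(\beta)$ to be \emph{nonzero} before I can say its syzygy scheme contains $X_L$ — if $pr_x(\beta) = 0$ then $x \in \text{Syz}(\beta)$ trivially but I learn nothing. So the real content is a dichotomy argument: either $pr_x(\beta) \neq 0$, in which case it lies in $K_{r(L)-k,1}(C,L)$, its syzygy scheme is $X_L$ by the $L$-level result, and Theorem \ref{char-syz-proj} pushes this up to $\text{Syz}(\beta) \supseteq \text{Cone}_x(X_L) = X_{L(x)}$; or $pr_x(\beta) = 0$ for the given $x$, and then I must rule this out, presumably by varying $x$ — but the statement is ``for any $x \in C$,'' so I should instead show $pr_x(\beta) \neq 0$ directly. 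To see $pr_x(\beta) \neq 0$: if it vanished, then $x \in \text{Syz}(\beta) = \PP^{r(L)+1}$ would be no constraint, but more usefully, $pr_x$ on $K_{r(L)+1-k,1}(C,L(x)) \to K_{r(L)-k,1}(C,L)$ — I'd examine its kernel via the long exact sequence / mapping-cone description of projection maps relating $H^0(L(x))$ and $H^0(L)$, using $h^1(L) = 0$; the kernel should be controlled by $K_{r(L)+1-k,1}$ of a point or be zero for dimension reasons since $r(L)+1-k$ is the \emph{top} of the linear strand.

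The main obstacle I expect is precisely this last point — establishing that $pr_x : K_{r(L)+1-k,1}(C,L(x)) \to K_{r(L)-k,1}(C,L)$ is injective (or at least nonzero on every $\beta$) at the extremal index. This is where the hypothesis $h^1(L-A) = 0$ (and the resulting sharp vanishing $b_{p,1}(C,L) = 0$ for $p > r(L)-k$, which follows from the $L$-level isomorphism with the scroll syzygies plus Ein--Lazarsfeld-type vanishing) must be used: the extremality of the index $r(L)+1-k$ relative to $L(x)$ — matching the top of the scroll strand for $X_{L(x)}$ — is what should kill any kernel. Once injectivity of $pr_x$ at the extremal spot is in hand, the argument above closes cleanly by the dimension count on scroll Betti numbers.
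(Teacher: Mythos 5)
Your overall strategy --- combine Theorem \ref{char-syz-proj} with Eisenbud--Popescu and the observation that projecting the scroll recovers the smaller scroll, then finish with the dimension count $b_{f-1,1}=f-1$ --- is exactly the paper's. But the execution through the single point $x$ has a genuine gap, in fact two. First, the step ``$pr_x(\beta)$ has syzygy scheme containing $X_L$, whence $\text{Syz}(\beta) \supseteq \text{Cone}_x(X_L)$'' runs Theorem \ref{char-syz-proj} in the wrong direction: for an individual point $x$ the theorem (its easy half) only gives the \emph{upper} bound $\text{Syz}(\beta) \subseteq \text{Cone}_x(\text{Syz}(pr_x\beta))$; the lower bound $\text{Syz}(\beta) \supseteq X_{L(x)}$ comes only from the full intersection $\bigcap_{z\in Z}\text{Cone}_z(\text{Syz}(pr_z\beta))$ over a \emph{spanning} set $Z$, and requires $\text{Syz}(pr_z\beta)\supseteq pr_z(X_{L(x)})$ for \emph{every} $z\in Z$, not just one. (Relatedly, $\text{Cone}_x(X_L)$ is not equal to $X_{L(x)}$ --- it is a variety of one higher dimension containing it; only the containment $X_{L(x)}\subseteq \text{Cone}_x(X_L)$ holds, which is the inclusion actually needed.) Second, the nonvanishing $pr_x(\beta)\neq 0$ at the specific point $x$, which you correctly flag as the main obstacle, is never established; the kernel of $pr_x$ is the image of $K_{r(L)+1-k,1}$ of $\Gamma_C(L(x))$ over $\mathrm{Sym}\,H^0(L)$, and there is no reason offered for this to vanish.

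Both problems disappear if you stop privileging the point $x$ and instead project from a \emph{general} point $y\in C$, which is what the paper does. Set $M=L(x)$. The bundles $M(-y)$ for $y\in C$ form a family containing $L=M(-x)$, so by upper semicontinuity of Koszul cohomology there is a dense open $U\subseteq C$ with $b_{r(M(-y))-k,1}(C,M(-y))=r(M(-y))-k$ for $y\in U$; shrinking $U$, one also has $pr_y(\beta)\neq 0$ for $y\in U$, since by \cite[Prop.~2.14]{aprodu-nagel} this holds for some point of the spanning set $C$ and the condition is open. For such $y$ the hypothesis forces $res_C$ to be an isomorphism onto $K_{r(M(-y))-k,1}(C,M(-y))$, so Eisenbud--Popescu gives $\text{Syz}(pr_y\beta)=X_{M(-y)}=pr_y(X_M)$, and Theorem \ref{char-syz-proj} applied to the spanning set $U$ yields $\text{Syz}(\beta)=\bigcap_{y\in U}\text{Cone}_y(pr_y(X_M))\supseteq X_M$. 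This gives surjectivity of $res_C$ at level $M$ and the count closes as you describe. So the missing idea is precisely to trade the one problematic projection point for a dense open set of unproblematic ones.
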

\begin{proof}
Set $M=L(x)$ and let $\alpha \in K_{r(M)-k,1}(C,M)$ be nonzero. By assumption $b_{r(M(-x))-k,1}(C,M(-x))$ takes the minimal possible value $r(M(-x))-k$. Hence, by semicontinuity of Koszul cohomology, there exists a dense open $U \seq C$ such that $b_{r(M(-y))-k,1}(C,M(-y))=r(M(-y))-k$ for all $y \in C$, and, further $pr_y(\alpha) \neq 0$, by \cite[Prop.\ 2.14]{aprodu-nagel}. By Proposition \ref{char-syz-proj},
$$\text{Syz}(\alpha)= \bigcap_{y \in U} \text{Cone}_y\left(\text{Syz}(pr_y(\alpha))\right),$$
where $pr_y(\alpha) \in K_{r(M(-y))-k,1}(C,M(-y))$. By assumption the map $$res_C: K_{r(M(-y))-k,1}(X_{M(-y)},\mathcal{O}_{M(-y)}(1)) \to K_{r(M(-y))-k,1}(C,M(-y))$$ is an isomorphism, so Theorem \ref{EP} gives
$$\text{Syz}(pr_y(\alpha))=X_{M(-y)} \seq \PP^{r(M)-1}.$$
Observe that $X_{M(-y)}=pr_y(X_M)$. Hence $X_M \seq \text{Cone}_y(pr_y(X_{M(-y)}))$. Hence $X_M \seq \text{Syz}(\alpha)$, as required.
\end{proof}
We will prove equation (\ref{inc-syz}) by induction. The initial step is provided in the following proposition.
\begin{thm} \label{first-step}
Let $g=2i+1$ and let $C$ be a smooth curve of genus $g$ and gonality $i+1$. Assume there is a unique $A \in W^1_{i+1}(C)$ and, further, for such a line bundle $A$ we have  $h^0(A^2)=3$. Let $L$ be a line bundle of degree $2g$ which is $i$-very ample, or, equivalently, $L-K_C$ is not in the difference variety $C_{i+1}-C_{i-1}$. Then $b_{i,1}(C,L)=i$.
\end{thm}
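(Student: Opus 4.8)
The statement is the base case of the induction proving \eqref{inc-syz}, with Proposition~\ref{1-by-1} providing the inductive step. Write $r:=r(L)$; since $\deg L=2g$ one has $h^1(L)=0$, $r=g=2i+1$ and $i=r-k$. The plan is to bound $b_{i,1}(C,L)$ from below by the scroll and from above by combining Theorem~\ref{char-syz-proj} with Theorem~\ref{EP}. For the lower bound, note that since $L$ is $i$-very ample one has $h^1(L-D)=0$ for every $D\in|A|$, so each span $\langle D\rangle$ is a $\PP^i$ and $X_L$ is a \emph{rational normal scroll}: it has dimension $i+1$, degree $r+1-k=i+1$ and codimension $i$, hence is of minimal degree. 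By the Eagon--Northcott resolution $b_{i,1}(X_L,\mathcal O_{X_L}(1))=i\binom{i+1}{i+1}=i$, and this is the extremal linear syzygy of $X_L$; since $\mathrm{res}_C\colon K_{i,1}(X_L,\mathcal O_{X_L}(1))\hookrightarrow K_{i,1}(C,L)$ is injective, $b_{i,1}(C,L)\ge i$.

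\textbf{The upper bound via projection.} By the reformulation around~\eqref{inc-syz}, the bound $b_{i,1}(C,L)\le i$ — hence the desired equality — is equivalent to $X_L\subseteq\operatorname{Syz}(\alpha)$ for every nonzero $\alpha\in K_{i,1}(C,L)$. Fix such an $\alpha$. Choosing a dense $Z\subseteq C$, Theorem~\ref{char-syz-proj} gives $\operatorname{Syz}(\alpha)=\bigcap_{x\in Z}\operatorname{Cone}_x\bigl(\operatorname{Syz}(pr_x\alpha)\bigr)$; and because $C\subseteq\operatorname{Syz}(\alpha)$, Proposition~\ref{AN-proj-syz}(ii) shows that for $x\in C$ the projected syzygy $pr_x\alpha$ (nonzero for general $x$ by \cite[Prop.\ 2.14]{aprodu-nagel}) lies in $K_{i-1,1}(C,L(-x))$. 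It would then suffice to know that $pr_x\alpha$ arises from the projected scroll $pr_x(X_L)=X_{L(-x)}$: for then $\operatorname{Cone}_x(X_{L(-x)})\supseteq X_L$ for every $x$, and intersecting over $Z$ gives $X_L\subseteq\operatorname{Syz}(\alpha)$. Here $L(-x)$ is $(i-1)$-very ample and a short Brill--Noether count — the translate $\{K_C+A-L+x:x\in C\}$ avoids $W^0_i(C)$ for general $C$ — shows $h^1(L(-x)-D)=0$, so $X_{L(-x)}$ is again a rational normal scroll with extremal linear syzygy in degree $i-1$, and by Theorem~\ref{EP} the single input one would need is $b_{i-1,1}(C,L(-x))=i-1$.

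\textbf{The main obstacle.} The difficulty is that this descent cannot be iterated down to the trivial statement $b_{0,1}=0$: after $j$ projections $L(-x_1-\cdots-x_j)$ is only $(i-j)$-very ample, and the vanishing needed to keep $X_{L(-x_1-\cdots-x_j)}$ a full rational normal scroll amounts to a translate of $C_j$ missing $C_{\,i+j-1}$ inside $\operatorname{Pic}^{\,i+j-1}(C)$, which fails once $2j$ is of order $i$, so for $i\ge2$ the intermediate scrolls degenerate. The crux is therefore to run an induction on $i$ (equivalently on $g=2i+1$, along the $k$-gonal divisor of $\overline M_g$): the base case $i=1$ is classical — a smooth gonality-$2$ genus-$3$ curve embedded as a sextic in $\PP^3$ lies on exactly one quadric, namely the scroll, since two quadrics would confine it to a curve of degree $\le4$ — while the inductive step should drop the genus by degenerating $C$ to a $1$-nodal $(i+1)$-gonal curve and tracking $X_L$ and its extremal syzygies through the degeneration, in the spirit of Theorems~\ref{AP1} and~\ref{intro-converse}; controlling the scroll, the comparison with the Eagon--Northcott syzygies and the $i$-very ampleness hypothesis through this step is the main work. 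An alternative, which I would also try, is a direct computation: the identification $M_L=M_{X_L}|_C$ together with $\binom{2i+2}{i+1}=\chi(C,\wedge^iM_L\otimes L)$ gives $b_{i,1}(C,L)=h^1(C,\wedge^iM_L\otimes L)$, and since $X_L\cong\PP^1\times\PP^i$ for general $C$ one can hope to evaluate this via the Eagon--Northcott cohomology of $\mathcal G:=\wedge^iM_{X_L}\otimes\mathcal O_{X_L}(1)$ on $X_L$ and the sequence $0\to\mathcal I_{C/X_L}\otimes\mathcal G\to\mathcal G\to\mathcal G|_C\to0$ — the obstacle there being the cohomology of $\mathcal I_{C/X_L}\otimes\mathcal G$.
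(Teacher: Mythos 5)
Your lower bound $b_{i,1}(C,L)\ge i$ via the Eagon--Northcott resolution of the scroll agrees with the paper, but the upper bound $b_{i,1}(C,L)\le i$ --- which is the entire content of the theorem --- is not established in your proposal. You correctly diagnose that the projection machinery of Theorem \ref{char-syz-proj} combined with Theorem \ref{EP} cannot supply this base case: that machinery only descends the statement along $L\mapsto L(-x)$ and therefore needs an independent starting point, which is precisely the role Theorem \ref{first-step} plays relative to Proposition \ref{1-by-1}. But the two alternatives you then sketch --- an induction on $i$ via a $1$-nodal degeneration, and a direct computation of $h^1(C,\wedge^i M_L\otimes L)$ via the ideal sheaf sequence on the scroll --- are both left at the level of a plan, with the essential difficulty in each explicitly flagged as unresolved. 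As written, the proposal proves only the inequality $b_{i,1}(C,L)\ge i$.

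The paper's actual argument for the upper bound is of a completely different nature: it invokes the equality of \emph{cycles} $\mathfrak{Syz}=\mathfrak{Sec}+i\,\mathfrak{hur}$ on $\mathcal{M}_{g,2g}$ established in \cite{generic-secant}. Since $L$ is $i$-very ample, a reduced divisor $D\in|L|$ gives a marked curve $(C,D)$ lying off $\mathfrak{Sec}$, so on a suitable \'etale neighbourhood the function cutting out $\mathfrak{Syz}$ vanishes to order $i$ times the order of vanishing of the function cutting out $\mathfrak{hur}$; the hypotheses that $A$ is the unique element of $W^1_{i+1}(C)$ and that $h^0(A^{\otimes 2})=3$ force the latter order to be exactly one, and the determinantal construction of the syzygy divisor then converts ``vanishing to order $i$'' into $b_{i,1}(C,L)\le i$. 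To complete your write-up you would either need to import this cycle identity, or genuinely carry out one of your two sketched strategies, each of which appears to be substantially harder than your outline suggests.
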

\begin{proof}
This follows from the results in \cite{generic-secant}, in particular the equality of \emph{cycles} $$\mathfrak{Syz}=\mathfrak{Sec}+i\mathfrak{hur}$$ on $\mathcal{M}_{g,2g}$. Namely, if $D=\sum_{i=1}^{2g} x_i \in |L|$ is a reduced divisor, then the marked curve $(C,D) \in \mathcal{M}_{g,2g}$ is not in the divisor $\mathfrak{Sec}$ by definition. Hence, on an appropriate \'etale cover $\phi: S \to \mathcal{M}_{g,2g}$ about $p=[C,D]$ the order of vanishing of the function defining the divisor $\mathfrak{Syz}(\phi)$ at $p$ is given by $i$ multiplied by the order of vanishing of $\mathfrak{hur}(\phi)$. But the assumption that there is a unique $A \in W^1_{i+1}(C)$ and $h^0(A^2)=3$ shows that $\mathfrak{hur}(\phi)$ vanishes to first-order at $p$ and thus $\mathfrak{Syz}(\phi)$ vanishes to order $i$ so that $b_{i,1}(C,L)\leq i$ by construction of the syzygy divisor $\mathfrak{Syz}$, cf.\ \cite[Thm.\ 3.1]{lin-syz}. The condition $L-K_C \notin C_{i+1}-C_{i-1}$ implies $h^1(L-A)=0$ and that the scroll $X_L$ has degree $i+1$. As we already have seen that $b_{i,1}(C,L) \geq b_{i,1}(X_L,\mathcal{O}_{X_L}(1))= i$, this completes the proof.
\end{proof}
As an immediately corollary, we can prove \cite{lin-syz}, Conjecture 0.6 for any smooth curve $C$ of odd genus $g \geq 5$ and submaximal gonality $k=\frac{g+1}{2}$, assuming there is a unique $A \in W^1_{i+1}(C)$ and, further, $h^0(A^2)=3$.
\begin{cor} \label{g=2k-1}
Let $C$ be a smooth curve of odd genus $g=2i+1$ for $i \geq 3$ and submaximal gonality $i+1$. Assume there is a unique $A \in W^1_{i+1}(C)$ and, further, $h^0(A^2)=3$. Then 
$b_{r(L)-i-1,1}(C,L)=r(L)-i-1$ for all line bundles of degree $\deg(L) \geq 2g+i+1$.
\end{cor}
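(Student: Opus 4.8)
The plan is to bootstrap from Theorem~\ref{first-step}, which settles the case $\deg L = 2g$, using Proposition~\ref{1-by-1} as an inductive engine that adds one point to the polarisation at a time. Since $g = 2i+1 = 2k-1$ with $k = i+1$, a line bundle $L$ of degree $2g$ has $r(L) = g$ and $r(L) - k = i$, so Theorem~\ref{first-step} yields $b_{r(L)-k,1}(C,L) = i$ for every $i$-very ample $L$ of degree $2g$, and $i$-very ampleness forces $h^1(L-A) = 0$. It is moreover enough to treat the threshold degree $2g+k$: if $\deg M = d > 2g+k$, write $M \cong N \otimes \mathcal O_C(x_{k+1}+\dots+x_{d-2g})$ with $\deg N = 2g+k$; then $h^1(N-A) = 0$ automatically (its degree is $-2$), and $d-2g-k$ applications of Proposition~\ref{1-by-1} carry the extremal Betti equality from $N$ up to $M$, the hypothesis $h^1(\,\cdot\,-A)=0$ surviving at each step because degrees only increase. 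So everything reduces to the statement
\[
(\ast)\qquad \text{every } M \in \operatorname{Pic}^{2g+k}(C) \text{ can be written } M \cong L_0 \otimes \mathcal O_C(E),\ \deg E = k,\ L_0 \text{ } i\text{-very ample.}
\]
Indeed, granting $(\ast)$ and writing $E = x_1 + \dots + x_k$, Theorem~\ref{first-step} gives $b_{r(L_0)-k,1}(C,L_0) = i$ and $k$ successive applications of Proposition~\ref{1-by-1} along $L_0,\, L_0(x_1),\, \dots,\, L_0(x_1+\dots+x_k) = M$ produce $b_{r(M)-k,1}(C,M) = i+k = r(M)-k$, which is the assertion.

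To prove $(\ast)$ I would argue by a dimension count. Set $\mu := M - K_C$ (of degree $k+2$). By the criterion of Theorem~\ref{first-step}, $L_0 = M(-E)$ is $i$-very ample precisely when $\mu - E \notin C_{i+1} - C_{i-1}$ in $\operatorname{Pic}^2(C)$, and unwinding the difference variety this says that $E$ is not a subdivisor of any member of the family
\[
\Sigma := \bigcup_{F \in C^{(i-1)}} \lvert \mu \otimes \mathcal O_C(F) \rvert \ \subseteq\ C^{(2k)}
\]
of effective divisors of degree $2k$ on $C$. Since the degree-$k$ subdivisors of a fixed degree-$2k$ divisor form a finite set, it is enough to prove $\dim \Sigma \le k-1$; the locus of bad $E$ in $C^{(k)}$ is then of dimension $< k$ and a general $E$ works. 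Realising $\Sigma$ as the Abel preimage of $\{\mu \otimes \mathcal O_C(F) : F \in C^{(i-1)}\} \subseteq \operatorname{Pic}^{2k}(C)$, a variety of dimension $i-1$ (because $i-1 < k$ makes $W^1_{i-1}(C)$ empty, whence $C^{(i-1)}$ injects into $\operatorname{Pic}^{i-1}(C)$), the bound $\dim \Sigma \le k-1 = i$ becomes the Brill--Noether estimates $\dim\{F \in C^{(i-1)} : h^0(\mu \otimes \mathcal O_C(F)) \ge m\} \le i+1-m$ for $m \ge 2$, the fibre of the Abel map over $\mu\otimes\mathcal O_C(F)$ being the linear system $\lvert\mu\otimes\mathcal O_C(F)\rvert$. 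The case $m=2$ is automatic, and the binding case $m=3$ — that $h^0(\mu \otimes \mathcal O_C(F)) \ge 3$ cannot hold for general $F$ — reduces, by imposing $i-1$ general base points, to $h^0(2K_C - M) \le i-1$, i.e.\ (Riemann--Roch) to $M - K_C \notin W^2_{k+2}(C)$.

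Hence the main obstacle — the step I expect to carry the real weight — is to show $W^2_{k+2}(C) = \varnothing$ (the estimates for $m \ge 4$ being dealt with by the same mechanism, each further section forcing a special linear series excluded by the numerics of a curve of genus $2k-1$ with a unique minimal pencil). This I would extract from the hypotheses as follows. Any $N_0 \in W^2_{k+2}(C)$ has $h^0(N_0) = 3$ exactly — since $\operatorname{Cliff}(C) = k-2$ ($C$, having a unique minimal pencil, is none of the exceptional curves of Clifford dimension $\ge 2$) rules out $h^0(N_0) \ge 4$ — and so computes the Clifford index; but on a curve of genus $2k-1$ with $W^1_k(C) = \{A\}$ the Clifford index is computed only by $A$ (with $h^0 = 2$) and by $\omega_C \otimes A^{-1}$ (with $h^0 = h^1(A) = k$), and since $k \ge 4$ such an $N_0$ can be neither — a contradiction. (Alternatively, $N_0 \in W^2_{k+2}(C)$ produces the one-dimensional family $\{N_0(-p) : p \in C\}$ inside $W^1_{k+1}(C)$, which is connected of dimension $1$ by Fulton--Lazarsfeld and contains $\{A(x) : x \in C\}$; as the $g^1_{k+1}$'s with a base point are exactly the $A(x)$, comparing the two families is seen to force either a $g^1_2$ or a second minimal pencil on $C$, both impossible since $\operatorname{gon}(C) = k \ge 4$.) With $W^2_{k+2}(C) = \varnothing$ in hand, $(\ast)$ holds and the corollary follows. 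The bootstrap via Theorem~\ref{first-step} and Proposition~\ref{1-by-1} and the reduction of $(\ast)$ to the Brill--Noether estimates are routine; it is precisely the vanishing $W^2_{k+2}(C) = \varnothing$, and the higher-$m$ analogues, that require care.
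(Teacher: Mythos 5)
Your skeleton is the paper's: reduce to the threshold degree $2g+k$ by Proposition \ref{1-by-1}, split off an effective divisor of degree $k=i+1$ so as to land on an $i$-very ample bundle of degree $2g$ to which Theorem \ref{first-step} applies, climb back up with $k$ more applications of Proposition \ref{1-by-1}, and convert the existence of such a splitting into a dimension bound on the locus of degree-$2k$ divisors failing to impose independent conditions on $L$ (your $\Sigma$ is exactly the secant variety $V^{2i+1}_{2i+2}(L)$ that the paper invokes from the proof of \cite[Thm.\ 0.2]{lin-syz}), which reduces to $W^2_{i+3}(C)=\emptyset$. All of that matches.

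The gap is in the one step you yourself identify as carrying the weight, namely $W^2_{k+2}(C)=\emptyset$. Your primary argument is circular: you deduce the nonexistence of $N_0\in W^2_{k+2}(C)$ from the assertion that $\text{Cliff}(C)$ is computed only by $A$ and $\omega_C\otimes A^{-1}$, but an $N_0\in W^2_{k+2}(C)$ with $h^0(N_0)=3$ would be precisely a further bundle computing $\text{Cliff}(C)=k-2$, so that assertion is the statement to be proved; the parenthetical about Clifford dimension only rules out $h^0(N_0)\geq 4$ and says nothing about why a rank-two series cannot \emph{also} compute the index. Uniqueness of the minimal pencil does not softly imply this: one would have to exclude, for instance, plane models of degree $k+2$ whose $\delta$-invariant $(k-1)(k-2)/2$ is concentrated at a single double point, where projection produces only one visible $g^1_k$. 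Your alternative argument does not close either: if the family $\{N_0(-p)\}$ is distinct from $\{A(x)\}$ inside $W^1_{k+1}(C)$, its general member is merely a base-point-free $g^1_{k+1}$, which is not a second minimal pencil and contradicts nothing --- one always has $\dim W^1_{k+1}(C)\geq\rho=1$, and the equality $W^1_{k+1}(C)=A+C$ is itself a nontrivial linear-growth statement. The paper closes this step with an external, syzygy-theoretic input: by Hirschowitz--Ramanan together with \cite[Theorem 3.1]{lin-syz}, the hypotheses (unique $A$, $h^0(A^{\otimes 2})=3$, genus $2k-1$) force $b_{g-k,1}(C,\omega_C)=g-k$, and the easy direction of Schreyer's conjecture \cite[Prop.\ 4.10]{SSW} converts that equality into the statement that $A$ is the unique line bundle of degree at most $g-1$ computing the Clifford index, whence $W^2_{i+3}(C)=\emptyset$. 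Without this (or an equally substantive replacement) your proof is incomplete.
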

\begin{proof}
By Proposition \ref{1-by-1}, it suffices to assume $\deg(L)=2g+i+1=5i+3$. By Theorem \ref{first-step}, together with Proposition \ref{1-by-1}, it suffices to show that there is an effective divisor $D \in C_{i+1}$ such that $L-K_C-D \notin C_{i+1}-C_{i-1}$. By \cite{lin-syz}, proof of Theorem 0.2, it suffices to show that the secant variety $V^{2i+1}_{2i+2}(L)$ has dimension at most $i$, and for this it is sufficient to show $W^2_{i+3}(C) = \emptyset$. In the range $i \geq 3$, any $B \in W^2_{i+3}(C)$ would contribute to the Clifford index, so it suffices to show that $\text{Cliff}(C)=i-1$ and, further, $A$ is the unique line bundle of degree at most $g-1$ achieving the Clifford index. But this follows from the well-known result of Hirschowitz--Ramanan that our assumptions imply that Schreyer's Conjecture holds for $C$, see \cite{hirsch} and \cite[Theorem 3.1]{lin-syz}, together with the easy direction of Schreyer's conjecture, \cite[Prop.\ 4.10]{SSW}.
\end{proof}
We now arrive at the main result of this section.
\begin{thm}
Let $C$ be a general $k$-gonal curve $C$ of genus $g \geq 2k-1$ for $k \geq 4$. Let $L$ be an arbitrary line bundle on $C$ of degree $\deg(L) \geq 2g+k$. Then $b_{r(L)-k,1}(C,L)=r(L)-k.$
\end{thm}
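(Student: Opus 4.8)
The plan is to reduce to the two special cases already established and then climb in genus. First, Proposition \ref{1-by-1} propagates the statement from degree $d$ to degree $d+1$: given $M$ of degree $d+1>2g+k$ and any $x\in C$, the bundle $M(-x)$ has degree $\geq 2g+k>2g-2+k$, so $h^{1}(M(-x)-A)=0$, and Proposition \ref{1-by-1} deduces $b_{r(M)-k,1}(C,M)=r(M)-k$ from the corresponding equality for $M(-x)$. Thus it is enough to treat $\deg(L)=2g+k$, where $r(L)=g+k$, and since the scroll $X_{L}$ already forces $b_{g,1}(C,L)\geq g$, only the bound $b_{g,1}(C,L)\leq g$ remains. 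For $g=2k-1$ this is Corollary \ref{g=2k-1}. So I would assume $g\geq 2k$ and induct on $g$, the inductive hypothesis being the full theorem for general $k$-gonal curves of genus $g-1$ (in particular, the extremal linear syzygies of such a curve paired with any line bundle of degree $\geq 2(g-1)+k$ come from the associated scroll). Throughout one uses that a general $k$-gonal curve of any genus $\geq 2k-1$ has a unique $A\in W^{1}_{k}$ with $h^{0}(A^{\otimes 2})=3$ and Clifford index $k-2$, by Arbarello--Cornalba together with Green's Conjecture for general $k$-gonal curves.

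For the inductive step I would specialize the pair $(C,L)$ — with $C$ general $k$-gonal of genus $g$ and $L$ general of degree $2g+k$ — to $(D,L_{0})$, where $D=C_{0}/(x\sim y)$ is obtained by gluing two points $x,y$ lying in a common fibre of the minimal pencil of a general $k$-gonal curve $C_{0}$ of genus $g-1$, and $L_{0}$ is a line bundle of degree $2g+k$ on $D$ whose pullback $\nu^{*}L_{0}$ to $C_{0}$ is general. By the theory of admissible covers $D$ is a limit of smooth $k$-gonal curves of genus $g$, so by upper semicontinuity of Koszul cohomology it suffices to prove $b_{g,1}(D,L_{0})\leq g$. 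Now $\deg(\nu^{*}L_{0})=2g+k\geq 2(g-1)+k$, so the inductive hypothesis gives $\mathrm{Syz}(\beta)=X_{\nu^{*}L_{0}}$ for every $0\neq\beta\in K_{g+1,1}(C_{0},\nu^{*}L_{0})$, while $b_{p,1}(C_{0},\nu^{*}L_{0})=0$ for $p>g+1$; and $D\subseteq\PP^{g+k}$ is the image of $\phi_{\nu^{*}L_{0}}(C_{0})\subseteq\PP^{g+k+1}$ under projection from a point $\xi$ on the secant line $\overline{\phi(x)\phi(y)}$. Feeding this into the machinery of Section 1 — the projection map of \cite{aprodu-nagel}, Theorem \ref{char-syz-proj}, and Proposition \ref{AN-proj-syz} — one relates $K_{g,1}(S(D),H^{0}(L_{0}))$ to $pr_{\xi}\bigl(K_{g+1,1}(C_{0},\nu^{*}L_{0})\bigr)$, provided the twisted Koszul group $K_{g,1}(C_{0},\nu^{*}L_{0}(-x-y))$ vanishes, which it does since $\mathrm{Cliff}(C_{0})=k-2$. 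Since $pr_{\xi}(X_{\nu^{*}L_{0}})=X_{L_{0}}$, Theorem \ref{EP} then forces every nonzero class of $K_{g,1}(D,L_{0})$ to come from $X_{L_{0}}$, whence $b_{g,1}(D,L_{0})\leq b_{g,1}(X_{L_{0}},\mathcal{O}(1))=g$, completing the induction.

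I expect the hard part to be this last comparison of syzygies through the degeneration. The centre of projection $\xi$ producing the $1$-nodal curve $D$ is \emph{not} a point of $C_{0}$ but lies on one of its secant lines, so Theorem \ref{char-syz-proj} and Proposition \ref{AN-proj-syz} do not apply literally; making the comparison precise — describing exactly how the linear syzygies of $D$ are assembled from those of its normalization, and controlling the twisted groups $K_{\bullet,1}(C_{0},\nu^{*}L_{0}(-x-y))$ via the Clifford index — is the same technical heart that underlies Aprodu's Projection Theorem and its converse (Theorems \ref{AP1} and \ref{intro-converse}), and the Clifford-index input needed there is legitimate because Schreyer's, hence Green's, Conjecture is known for general $k$-gonal curves. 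A secondary subtlety is that the theorem asserts the equality for \emph{all} line bundles of degree $\geq 2g+k$, whereas semicontinuity alone only yields it for a general such bundle; this is absorbed by arranging the nodal argument to hold uniformly in $L_{0}$ and then invoking Proposition \ref{1-by-1}, which propagates the universal statement in the degree variable. Alternatively, the genus induction could be replaced by a direct "first step" at general genus, using the degeneracy-locus and cycle-class techniques of \cite{generic-secant} in place of Theorem \ref{first-step}.
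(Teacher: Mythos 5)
Your skeleton agrees with the paper's: reduce to $\deg(L)=2g+k$ via Proposition \ref{1-by-1}, take Corollary \ref{g=2k-1} as the base case $g=2k-1$, and climb in genus by specializing to a nodal curve and invoking semicontinuity. The divergence --- and the gap --- is in the inductive step. The paper does \emph{not} degenerate to an irreducible $1$-nodal curve; it attaches an elliptic tail, $X=C\cup_q E$ with $q$ a ramification point of the minimal pencil and $\deg(L_E)=1$. For a general $p'\in E$ one then has $H^0(X,L(-p'))\simeq H^0(C,L_C(-q))$ and an injection $H^0(C,L_C^{\otimes 2}(-2q))\hookrightarrow H^0(X,L^{\otimes 2}(-2p'))$, so an elementary diagram chase gives $K_{g,1}(X,L(-p'))\simeq K_{g,1}(C,L_C(-q))$ and the induction closes with no projection machinery at all.

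Your step instead glues two points of $C_0$ lying in a fibre of the minimal pencil and must compare $K_{g,1}(D,L_0)$ with the syzygies of the normalization under projection from a point $\xi$ of a secant line. You correctly flag that Theorem \ref{char-syz-proj} and Proposition \ref{AN-proj-syz} do not apply to such a $\xi$, but what you defer to ``the same technical heart as Theorems \ref{AP1} and \ref{intro-converse}'' is not a routine adaptation: it is the entire apparatus of Section 3 (the cone over the projected curve, its desingularization, the analogues of Lemmas \ref{class1} and \ref{class2}, and the exact sequence of Proposition \ref{proj-map}), which the paper develops only for the canonical bundle, and moreover in the \emph{opposite} direction to the one you need --- Theorem \ref{conv-AP} passes from an exact Betti count on the nodal curve down to a vanishing on its normalization, whereas you need to pass from the normalization up to an exact count on the nodal curve. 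Two specific unproved claims stand out: first, the vanishing of $K_{g,1}(C_0,\nu^*L_0(-x-y))$ does not follow from $\mathrm{Cliff}(C_0)=k-2$, since the Clifford index governs syzygies of $\omega_{C_0}$ while here you need a gonality-theorem-type vanishing for a nonspecial bundle of degree $2(g-1)+k$, which at that degree is essentially of the same strength as the statement being proved; second, the assertion that every nonzero class in $K_{g,1}(D,L_0)$ is accounted for by $pr_\xi$ of a class on $C_0$ rests on an exact sequence you have not produced. The elliptic-tail degeneration is precisely what lets the paper sidestep both issues, so as written your inductive step does not close. (Your secondary worry about ``arbitrary'' versus ``general'' $L$ is real as well; the paper absorbs it by ordering the reductions so that Proposition \ref{1-by-1} is applied to $L'(-p)$ for a suitably chosen $p$, rather than by any uniformity of the degeneration.)
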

\begin{proof}
We mirror the proof of \cite[Thm.\ 0.1]{lin-syz}. Namely, fix $k \geq 4$. We prove the result by induction on the genus $g$ of $C$. If $g=2k-1$, then the claim is Corollary \ref{g=2k-1}. So, assume the claim holds for a general $k$-gonal curve $C$. By Proposition \ref{1-by-1}, it suffices to show that, for a general $k$-gonal curve $X'$ of genus $g+1$ and any line bundle $L'$ on $X'$ of degree $2g+2+k$, we have $b_{g+1,1}(X',L')=g+1$. Using Proposition \ref{1-by-1} once more, it is further sufficient to show that there exists a point $p \in X'$ such that $b_{g,1}(X',L'(-p))$ takes the lowest possible value $g$. Now let $X$ be the nodal curve $C \cup_q E$ where $C$ is a general $k$-gonal curve as above, $q$ is a branch point of some pencil $f: C \to \PP^1$ of degree $k$ and $E$ is an elliptic curve. Then $X$ is a stable, genus $g+1$ curve of compact type, which is a limit of smooth $k$-gonal curves. By semicontinuity of Koszul cohomology, and since $X$ is of compact type, it suffices to show that for any line bundle $L$ on $X$ with
$$ \deg(L_E)=1, \; \; \deg(L_C)=2g+1+k$$
there exists a point $p' \in E\setminus \{q \}$ with
\begin{enumerate} [label=(\roman*)]
\item $b_{g,1}(X,L(-p'))=g$
\item $h^1(X,L(-p'))=h^1(X,L^{\otimes 2}(-2p'))=0$,
\end{enumerate}
see \cite{lin-syz}, proof of Theorem 0.1. Choose a general point $p' \in E\setminus \{q \}$. Statement $(ii)$ follows immediately from the Mayer--Vietoris sequence
$$0 \to L_C(-q) \to L(-p') \to L_E(-p') \to 0.$$ For statement $(i)$, consider the commutative diagram
{\small{$$\xymatrix@C=1em{
\bigwedge^{g+1} H^0(C,L(-q)) \ar[r]^-d \ar[d]^{\alpha}  & \bigwedge^{g} H^0(C,L(-q)) \otimes H^0(C,L(-q)) \ar[r]^-d \ar[d]^{\beta}& \bigwedge^{g-1} H^0(C, L(-q))\otimes H^0(C,L^{\otimes 2}(-2q)) \ar[d]^{\gamma} \\
\bigwedge^{g+1} H^0(X,L(-p')) \ar[r]^-d  & \bigwedge^{g} H^0(X,L(-p')) \otimes H^0(X,L(-p')) \ar[r]^-d &\bigwedge^{g-1} H^0(X, L(-p'))\otimes H^0(X,L^{\otimes 2}(-2p'))
},$$}}
where $\alpha, \beta$ are isomorphisms, and $\gamma$ is induced from the natural composition
$$ H^0(C,L^{\otimes 2}(-2q)) \hookrightarrow H^0(C,L^{\otimes 2}(-q))\cong H^0(X,L^{\otimes 2}(-2p')).$$ Since $\gamma$ is injective, we have a natural isomorphism
$$K_{g,1}(C,L(-q)) \xrightarrow{\sim} K_{g,1}(X,L(-p'))$$
induced on the cohomology of the rows in the above diagram. By the induction hypothesis, $b_{g,1}(C,L(-q))=g$, which completes the proof.
\end{proof}

\section{A converse to Aprodu's Projection Theorem}
Recall the following theorem of Aprodu, \cite{aprodu-higher}.
\begin{thm} [Aprodu's Projection Theorem] \label{aprodu-projection}
Let $C$ be a smooth curve of genus $g$ and suppose $x, y \in C$ are distinct points. Let $D$ be the $g+1$ nodal curve obtained by identifying $x$ and $y$. Suppose $K_{p,1}(C,\omega_C)=0$. Then $K_{p+1,1}(D,\omega_D)=0$.
\end{thm}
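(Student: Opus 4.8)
The plan is to peel $D$ apart in two moves: first replace the singular curve $D$ by its normalization $C$ carrying the line bundle $\omega_C(x+y)$, then compare $\omega_C(x+y)$ with $\omega_C$ itself. Throughout I assume $C$ is non-hyperelliptic --- if $C$ is hyperelliptic then $K_{p,1}(C,\omega_C)\neq 0$ for every $0\leq p\leq g-2$, so the hypothesis only has content for $p\geq g-1$, where the conclusion is immediate --- and I assume $p\geq 1$. Pullback along $\nu\colon C\to D$ identifies $H^0(D,\omega_D)$ with $H^0(C,\omega_C(x+y))$: both have dimension $g+1$ and the constraint $\mathrm{Res}_x+\mathrm{Res}_y=0$ is automatic, so the natural map is an isomorphism. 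Hence $D\subset\PP^g=\PP(V^\vee)$, where $V:=H^0(\omega_D)=H^0(C,\omega_C(x+y))$, is a nodal curve with a single node $\delta$; the codimension one subspace $W_\delta\subset V$ of forms vanishing at $\delta$ is exactly $H^0(C,\omega_C)$, and, since $\omega_C$ is very ample, projection from $\delta$ carries $D$ onto the projectively normal canonical curve $\bar C\subset\PP(W_\delta^\vee)=\PP^{g-1}$.

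For the first move I would compare the section ring $\Gamma_D(\omega_D)=\bigoplus_n H^0(D,\omega_D^{\otimes n})$, whose Koszul cohomology over $\mathrm{Sym}(V)$ computes $K_{\bullet,\bullet}(D,\omega_D)$, with $R:=\bigoplus_n H^0\!\big(C,\omega_C(x+y)^{\otimes n}\big)=\Gamma_C(\omega_C(x+y))$. Pullback embeds $\Gamma_D(\omega_D)$ into $R$ as the graded $\mathrm{Sym}(V)$-submodule of sections obeying the node-gluing condition, and the cokernel $T:=R/\Gamma_D(\omega_D)$ satisfies $T_0=T_1=0$, $\dim T_n=1$ for $n\geq 2$, and $W_\delta\cdot T=0$ (multiplying a node class by a form vanishing at $\delta$, i.e.\ a regular differential on $C$, lowers its pole orders at $x$ and $y$). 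Thus $T$ is a shift of the coordinate ring of the point $\delta$, so $K_{p,q}(T,V)=0$ for every $q\neq 2$, and the long exact sequence attached to $0\to\Gamma_D(\omega_D)\to R\to T\to 0$ gives $K_{p+1,1}(D,\omega_D)\cong K_{p+1,1}(R,V)=K_{p+1,1}\!\big(C,\omega_C(x+y)\big)$. It then suffices to prove $K_{p+1,1}(C,\omega_C(x+y))=0$.

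For the second move, set $L:=\omega_C(x+y)$. The long exact sequence relating Koszul cohomology over $\mathrm{Sym}(V)$ to Koszul cohomology over $\mathrm{Sym}(W_\delta)$ for the hyperplane $W_\delta=H^0(\omega_C)\subset V=H^0(L)$ --- whose leading term is the projection map $pr_\delta$ --- sandwiches $K_{p+1,1}(\Gamma_C(L),V)$ between $K_{p+1,1}(\Gamma_C(L),W_\delta)$ and $K_{p,1}(\Gamma_C(L),W_\delta)$, so I would reduce to showing $K_{q,1}(\Gamma_C(L),W_\delta)=0$ for $q\in\{p,p+1\}$, with $\Gamma_C(L)$ regarded over $\mathrm{Sym}(W_\delta)=\mathrm{Sym}(H^0(\omega_C))$. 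Inside $\Gamma_C(L)$ sits the canonical ring $\Gamma_C(\omega_C)$ as a graded $\mathrm{Sym}(W_\delta)$-submodule; since $C$ is non-hyperelliptic this is the coordinate ring of $\bar C$, so $K_{q,1}(\Gamma_C(\omega_C),W_\delta)=K_{q,1}(C,\omega_C)$, while the quotient $Q:=\Gamma_C(L)/\Gamma_C(\omega_C)$ is supported on the two points $\bar x=\phi_{\omega_C}(x)$ and $\bar y=\phi_{\omega_C}(y)$. Feeding the hypothesis $K_{p,1}(C,\omega_C)=0$, together with its consequence $K_{p+1,1}(C,\omega_C)=0$ (a minimal free resolution has no free summand with vanishing differential), into the long exact sequence of $0\to\Gamma_C(\omega_C)\to\Gamma_C(L)\to Q\to 0$ produces injections $K_{q,1}(\Gamma_C(L),W_\delta)\hookrightarrow K_{q,1}(Q,W_\delta)$ for $q=p,p+1$. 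Everything then comes down to the vanishing $K_{q,1}(Q,W_\delta)=0$ for $q=p,p+1$.

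The hard part will be exactly this last vanishing. The module $Q$ lives on the two glued points $\bar x,\bar y$, but it is not killed by the maximal ideals there --- it records poles of unbounded order, so its Hilbert function grows linearly --- and one must show it carries no linear syzygy in homological degree $p$ or $p+1$. Pinning down $K_{\bullet,1}(Q,W_\delta)$ is the genuinely delicate point, and it is here that one uses that only two points are being identified, so that the node contributes a one-dimensional piece; the rest is formal manipulation of long exact sequences in Koszul cohomology. This is the technical core of Aprodu's Projection Theorem.
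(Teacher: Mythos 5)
A preliminary remark: the paper does not prove this theorem at all --- it quotes it from \cite{aprodu-higher} --- so there is no in-paper proof to compare against; the closest in-paper material is the identification $K_{p,1}(D,\omega_D)\simeq K_{p,1}(C,\omega_C(x+y))$ established in Section 3 for use in the \emph{converse} statement. Your first move proves exactly that identification, and your argument for it is correct: $T=R/\Gamma_D(\omega_D)$ is annihilated by $W_\delta=H^0(\omega_C)$, is a shift of the point module, and hence contributes nothing to $K_{*,0}$ or $K_{*,1}$. The projection (hyperplane) exact sequence sandwiching $K_{p+1,1}(\Gamma_C(L),V)$ between $K_{p+1,1}(\Gamma_C(L),W_\delta)$ and $K_{p,1}(\Gamma_C(L),W_\delta)$ is also correct. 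Up to that point your skeleton is the standard one.

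The problem is the final step, which you yourself flag as ``the technical core'' and do not prove --- and which, as you have formulated it, is actually false rather than merely hard. You reduce to $K_{q,1}(Q,W_\delta)=0$ for $q=p,p+1$, where $Q=\Gamma_C(L)/\Gamma_C(\omega_C)$. Since $Q_0=0$, the group $K_{q,1}(Q,W_\delta)$ is the \emph{entire kernel} of $\bigwedge^{q}W_\delta\otimes Q_1\to\bigwedge^{q-1}W_\delta\otimes Q_2$; there are no boundaries to divide by. Now $Q_1$ is spanned by the class of any $v_0\in H^0(\omega_C(x+y))$ with genuine poles at $x$ and $y$, and for every $w\in H^0(\omega_C(-x-y))$ the product $wv_0$ lies in $H^0(\omega_C^{\otimes 2})$, i.e.\ $w\cdot\bar v_0=0$ in $Q_2$. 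Hence $\bigwedge^{q}H^0(\omega_C(-x-y))\otimes Q_1\subseteq K_{q,1}(Q,W_\delta)$, which is nonzero for every $q\le h^0(\omega_C(-x-y))=g-2$ --- that is, for every $q$ in the nontrivial range of the theorem. So the injection $K_{q,1}(\Gamma_C(L),W_\delta)\hookrightarrow K_{q,1}(Q,W_\delta)$, while true, can never yield the vanishing you need. Exactness instead identifies $K_{q,1}(\Gamma_C(L),W_\delta)$ with the kernel of the connecting map $K_{q,1}(Q,W_\delta)\to K_{q-1,2}(C,\omega_C)$, and proving that \emph{this} kernel vanishes (note $K_{q-1,2}(C,\omega_C)\simeq K_{g-1-q,1}(C,\omega_C)^{\vee}$ by duality, so the target is itself typically nonzero) is precisely where the geometric content of Aprodu's theorem lives; it is not a formal manipulation of long exact sequences. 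As it stands the proposal is an outline whose terminal reduction is to a false statement, so it does not constitute a proof; you would need to analyze that connecting map directly, or follow the actual argument of \cite{aprodu-higher}.
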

One important application of this result is that it provides an approach to proving Green's conjecture for curves of a fixed gonality by induction on the genus. In this sense the most interesting case of the Aprodu Projection Theorem is where $p=g-k+1$, where $k$ is the gonality of $C$. The goal of this section is to prove a partial converse of Aprodu's Theorem, allowing one to deduce Green's Conjecture $K_{g-k+1,1}(C,\omega_C)=0$ by assuming that $D$ satisfies Schreyer's Conjecture on syzygies arising from scrolls, which is a stronger assumption than Green's Conjecture, see \cite{SSW}, \cite{lin-syz}.\medskip

Let $C$ be a smooth curve of genus $g$ and gonality $k \geq 3$, let $A \in W^1_k(C)$ and let $T \in |A|$ be a general divisor. Choose distinct points $x,y \in T$ and let $D$ be the nodal curve of genus $g+1$ obtained by identifying $x$ and $y$. Then there is a base point free line bundle $B$ on $D$ with two sections such that $\nu^* B \simeq A$, where $\nu: C \to D$ is the normalization morphism. \medskip

Embed $D$ in $\PP^g$ via the canonical linear system and let $\pi_p \; : \; \PP^g \dashrightarrow \PP^{g-1}$ be the projection from the node $p \in D$. Then the canonical curve $C \seq \PP^{g-1}$ is the projection $\pi_p(D)$. Further, let $Z \seq \PP^g$ denote the cone over $\pi_p(D)$ with vertex at $p$. Then $D \seq Z$. We denote by $\widetilde{\nu} : \widetilde{Z} \to Z$ the desingularization of $Z$. The strict transform $D'$ of $D$ is isomorphic to $C$ and $\widetilde{\nu}_{|_{D'}} \simeq \nu$.

By \cite[V.2]{hartshorne}, $\widetilde{Z}\simeq \PP(\mathcal{O}_C \oplus \omega_C)$ and $\text{Pic}(\widetilde{Z}) \simeq \mathbb{Z}[\mathcal{H}] \oplus \iota^*\text{Pic}(C)$, where $\mathcal{H}$ denotes the pull-back of the hyperplane section of $\PP^g$ and $$\iota: \PP(\mathcal{O}_C \oplus \omega_C) \to C$$ is the projection map.
\begin{lem} \label{class1}
We have $\mathcal{O}_{\widetilde{Z}}(D') \simeq \mathcal{O}_{\widetilde{Z}}(\mathcal{H}) \otimes \iota^*\mathcal{O}_C(x+y)$.
\end{lem}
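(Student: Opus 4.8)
## Proof Proposal for Lemma \ref{class1}

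The plan is to compute the divisor class of $D'$ in $\operatorname{Pic}(\widetilde{Z}) \simeq \mathbb{Z}[\mathcal{H}] \oplus \iota^*\operatorname{Pic}(C)$ by writing $\mathcal{O}_{\widetilde{Z}}(D') \simeq \mathcal{O}_{\widetilde{Z}}(a\mathcal{H}) \otimes \iota^*\mathfrak{d}$ for some integer $a$ and some line bundle $\mathfrak{d}$ on $C$, and then pinning down $a$ and $\mathfrak{d}$ by restricting to suitable curves on which the intersection numbers are easy to read off. First I would determine $a$ by intersecting $D'$ with a fiber $F \simeq \PP^1$ of $\iota$: such a fiber is a line through the vertex $p$ under $\widetilde{\nu}$, and it meets the strict transform $D'$ in exactly one point (since a general line through a node $p$ of the base curve $\pi_p(D)$ meets $D$ in $p$ together with one further point, and passing to the cone and strict transform each such further point lies on $D'$). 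Since $\mathcal{H} \cdot F = 1$ and $\iota^*\mathfrak{d} \cdot F = 0$, this gives $a = 1$.

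Next I would identify $\mathfrak{d}$. Having fixed $a=1$, we have $\mathcal{O}_{\widetilde{Z}}(D' - \mathcal{H}) \simeq \iota^*\mathfrak{d}$, so it suffices to restrict both sides to the curve $D' \simeq C$ itself (or, equivalently, to compute $\mathcal{O}_{\widetilde{Z}}(D')|_{D'}$ and $\mathcal{O}_{\widetilde{Z}}(\mathcal{H})|_{D'}$ separately and compare). The restriction $\mathcal{O}_{\widetilde{Z}}(\mathcal{H})|_{D'}$ is the pullback of the hyperplane bundle, i.e.\ $\nu^*\mathcal{O}_{\PP^{g-1}}(1) \simeq \omega_C(-x-y)$, since $C \subseteq \PP^{g-1}$ is the projection $\pi_p(D)$ of the canonical curve $D \subseteq \PP^g$ from the node, and projecting the canonical model of a nodal curve from a node embeds the normalization by $\omega_C(-x-y) = \nu^*\omega_D$. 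Meanwhile $\mathcal{O}_{\widetilde{Z}}(D')|_{D'}$ is the normal bundle $N_{D'/\widetilde{Z}}$. Then, since $\iota^*\mathfrak{d}|_{D'} = \mathfrak{d}$ (the composite $D' \hookrightarrow \widetilde{Z} \xrightarrow{\iota} C$ being an isomorphism identified with $\nu$ up to this identification — more precisely $\iota|_{D'}$ is an isomorphism $D' \xrightarrow{\sim} C$), we get $\mathfrak{d} \simeq N_{D'/\widetilde{Z}} \otimes \omega_C(x+y)^{-1}(x+y) \cdot(\ldots)$; so the whole computation reduces to showing $N_{D'/\widetilde{Z}} \simeq \omega_C$.

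To compute the normal bundle $N_{D'/\widetilde{Z}}$ I would use the blow-up structure: $\widetilde{\nu}: \widetilde{Z} \to Z$ resolves the vertex singularity, and $D' \subseteq \widetilde{Z}$ is the strict transform of $D$, which passes through the two preimages on the exceptional locus corresponding to the two branches at the node $p$. Equivalently, one may exploit $\widetilde{Z} \simeq \PP(\mathcal{O}_C \oplus \omega_C)$ directly: $D'$ is a section of $\iota$, and sections of a $\PP^1$-bundle $\PP(\mathcal{O}_C \oplus \omega_C)$ correspond to quotient line bundles; the section $D'$ has self-intersection / normal bundle computable from the Euler sequence on $\PP(\mathcal{O}_C \oplus \omega_C)$. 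Since the tautological bundle $\mathcal{O}(1)$ on $\PP(\mathcal{O}_C \oplus \omega_C)$ restricts on the section $D'$ to... — here one must be careful with conventions, but the upshot that $N_{D'/\widetilde{Z}} \simeq \omega_C$ is forced by: $\mathcal{H}$ corresponds to $\mathcal{O}(1)$ twisted appropriately so that $\mathcal{H}|_{D'} = \omega_C(-x-y)$ as computed above, and the section class $D'$ satisfies $D'|_{D'} = \mathcal{H}|_{D'} \otimes (\mathfrak{d})$ by the relation being proved, combined with the known intersection theory on the Hirzebruch-type surface bundle.

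The main obstacle I expect is bookkeeping with sign/twist conventions for $\PP(\mathcal{O}_C \oplus \omega_C)$ (whether $\PP$ means lines or quotients, and hence whether the negative section carries $\omega_C$ or $\omega_C^{-1}$), together with correctly tracking how the strict transform $D'$ sits relative to the exceptional section of $\widetilde{\nu}$ and the hyperplane class $\mathcal{H}$. A clean way to avoid most of this is to verify the claimed identity by intersecting both sides with a complete set of test curves generating $\operatorname{Pic}(\widetilde{Z})$ numerically up to the $\iota^*\operatorname{Pic}(C)$ ambiguity — namely a fiber $F$ (which fixed $a=1$) and the curve $D'$ itself — and to compute $\mathcal{H}|_{D'}$ and $D'|_{D'}$ by the geometry of projection from the node together with the adjunction/blow-up formula for $N_{D'/\widetilde{Z}}$. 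Once $N_{D'/\widetilde{Z}} \simeq \omega_C$ is established, the identity $\mathcal{O}_{\widetilde{Z}}(D') \simeq \mathcal{O}_{\widetilde{Z}}(\mathcal{H}) \otimes \iota^*\mathcal{O}_C(x+y)$ follows by restricting to $D'$ and observing both sides have the same restriction together with the same $\mathcal{H}$-degree, which suffices since $\operatorname{Pic}(\widetilde{Z})$ is detected by the $\mathcal{H}$-coefficient and restriction to the section $D'$.
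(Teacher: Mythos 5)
Your overall strategy (pin down the $\mathcal{H}$-coefficient by intersecting with a fiber of $\iota$, then recover the $\iota^*\operatorname{Pic}(C)$ part by restricting to the section $D'$) is sound, and the fiber computation giving $a=1$ is correct. However, the second half contains a concrete error and then a circularity at exactly the point where the real content of the lemma lies. First, the restriction $\mathcal{O}_{\widetilde{Z}}(\mathcal{H})|_{D'}$ is not $\omega_C(-x-y)$: the class $\mathcal{H}$ is the pullback of the hyperplane section of $\PP^g$ (where $D$ is canonically embedded), not of $\PP^{g-1}$, so $\mathcal{H}|_{D'} \simeq \nu^*\omega_D \simeq \omega_C(x+y)$ — note also that $\nu^*\omega_D$ is $\omega_C(x+y)$, never $\omega_C(-x-y)$; the line bundle $\omega_C$ (without twist) is what gives the \emph{projected} embedding in $\PP^{g-1}$, as the subsystem of $\nu^*\omega_D$ vanishing at $x$ and $y$. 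A degree check confirms this: $\mathcal{H}\cdot D' = \deg_{\PP^g} D = 2g$, while $\deg \omega_C(-x-y) = 2g-4$. Consequently the normal bundle you need is $N_{D'/\widetilde{Z}} \simeq \omega_C(2x+2y)$, not $\omega_C$.

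Second, and more seriously, you never actually compute $N_{D'/\widetilde{Z}}$: you say its value is ``forced by'' the hyperplane restriction ``combined with'' the relation being proved, which is circular, and you defer the convention issues that are precisely where the computation lives. The paper's proof resolves exactly this point: the section $s: C \to \PP(\mathcal{O}_C\oplus\omega_C)$ with image $D'$ corresponds to a surjection $\mathcal{O}_C\oplus\omega_C \to s^*\mathcal{H} \simeq \omega_C(x+y)$ with kernel $\mathcal{N}$ satisfying $\iota^*\mathcal{N} \simeq \mathcal{O}_{\widetilde{Z}}(\mathcal{H}-D')$ (Hartshorne V.2), and taking determinants gives $\mathcal{N} \simeq \omega_C \otimes (s^*\mathcal{H})^{-1} \simeq \mathcal{O}_C(-x-y)$, whence the lemma. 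To repair your argument you would need to carry out this (or an equivalent blow-up/adjunction) computation honestly; as written, the two sign errors cancel to give the right final formula, but neither intermediate claim is true and the key step is assumed rather than proved.
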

\begin{proof}
The strict transform $D'$ corresponds to a section $s : C \to \PP(\mathcal{O}_C \oplus \omega_C)$. We have $s^* \mathcal{H} \simeq \nu^* \omega_D\simeq \omega_C(x+y)$. By Prop.\ 2.6, \cite[V.2]{hartshorne}, we have a short exact sequence
$$0 \to \mathcal{N} \to \mathcal{O}_C \oplus \omega_C \to s^* \mathcal{H} \to 0,$$
with $\iota^*\mathcal{N} \simeq \mathcal{H}(-D') $. By taking determinants of the short exact sequence above
$$\mathcal{N} \simeq \omega_C(-s^*\mathcal{H}) \simeq \mathcal{O}_C(-x-y).$$
Applying $\iota^*$ gives $\mathcal{O}_{\widetilde{Z}}(D') \simeq \mathcal{O}_{\widetilde{Z}}(\mathcal{H}) \otimes \iota^*\mathcal{O}_C(x+y)$, as claimed.
\end{proof}

By the previous lemma, we have a short exact sequence
$$0 \to \mathcal{H}^{ \vee} \left(-\iota^*(x+y) \right) \to \mathcal{O}_{\widetilde{Z}} \to \mathcal{O}_{D'} \to 0$$
which induces an exact sequence of $\mathcal{S}:=\text{Sym}\left(H^0(\widetilde{Z},\mathcal{H})\right)$ modules
$$0 \to \bigoplus_{q \in \mathbb{Z}} H^0\left(\mathcal{H}^{\otimes (q-1)}(-\iota^*(x+y)) \right) \to \bigoplus_{q \in \mathbb{Z}} H^0\left( \mathcal{H}^{\otimes q}\right) \xrightarrow{\alpha} \bigoplus_{q \in \mathbb{Z}} H^0\left( C, \omega_C^{\otimes q}(qx+qy)\right)$$
where we used the identification $C \simeq D'$. 
We let $$\mathbb{M}:=\text{Im}(\alpha)$$
be the image of $\alpha$.
Hence we have a short exact sequence
\begin{align} \label{ses-1}
0 \to \bigoplus_{q \in \mathbb{Z}} H^0\left(\mathcal{H}^{\otimes (q-1)}(-\iota^*(x+y)) \right) \to \bigoplus_{q \in \mathbb{Z}} H^0\left( \mathcal{H}^{\otimes q}\right) \to \mathbb{M} \to 0
\end{align}
of $\mathcal{S}$ modules.

\begin{prop} \label{proj-map}
The following statements hold:
\begin{enumerate}
\item The inclusion $\mathbb{M} \seq \bigoplus_{q \in \mathbb{Z}} H^0\left( C, \omega_C^{\otimes q}(qx+qy))\right)$ induces an isomorphism
$K_{p,1}(\mathbb{M}, \mathcal{S}) \simeq K_{p,1}(C,\omega_C(x+y))$ of syzygy spaces, for all integers $p$.
\item Restricting to the hyperplane section induces an isomorphism $K_{p,q}(\widetilde{Z},\mathcal{H}) \simeq K_{p,q}(C, \omega_C)$, for all $p,q$.
\item We have a natural exact sequence
$$0 \to K_{p,1}(C,\omega_C) \to K_{p,1}(C,\omega_C(x+y)) \xrightarrow{\delta} K_{p-1,1}(\widetilde{Z},-\iota^*(x+y);\mathcal{H}) \to K_{p-1,2}(C, \omega_C)$$
for all $p \in \mathbb{Z}$.
\end{enumerate}
\end{prop}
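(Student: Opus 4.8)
The plan is to extract all three statements from the short exact sequence \eqref{ses-1} of $\mathcal{S}$-modules together with the projective normality facts recorded for the scroll $\widetilde{Z} \simeq \PP(\mathcal{O}_C \oplus \omega_C)$. For part (1), I would first observe that the graded piece $\mathbb{M}_q$ equals the image of $H^0(\widetilde{Z},\mathcal{H}^{\otimes q}) \to H^0(C,\omega_C^{\otimes q}(qx+qy))$, and that this inclusion is in fact an isomorphism for $q \geq 1$: indeed $H^0(\widetilde{Z},\mathcal{H}^{\otimes q}) = H^0(C, \mathrm{Sym}^q(\mathcal{O}_C\oplus\omega_C))$, and one computes that the cokernel of $\alpha$ in degree $q$ is controlled by $H^1$ of the subsheaf $\mathcal{H}^{\otimes(q-1)}(-\iota^*(x+y))$, which pushes forward to a sum of $\omega_C^{\otimes j}(-x-y)$ with $j \le q-1$ whose $H^1$ vanishes once $q \ge 1$ by Serre duality (here one uses $g \geq 2$ so that $\omega_C^{\otimes j}(-x-y)$ is nonspecial for $j \geq 1$). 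Since the Koszul complex computing $K_{p,1}$ only involves graded pieces in degrees $0,1,2$, and $\mathbb{M}_0 = \C = H^0(C,\mathcal{O}_C)$, $\mathbb{M}_1$ and $\mathbb{M}_2$ agree with the corresponding pieces of the section algebra of $\omega_C(x+y)$, the map $K_{p,1}(\mathbb{M},\mathcal{S}) \to K_{p,1}(C,\omega_C(x+y))$ is an isomorphism; this is the standard fact that $K_{p,1}$ depends only on the truncation in low degrees (cf.\ \cite{aprodu-nagel}).

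For part (2), I would argue that restriction of the hyperplane bundle $\mathcal{H}$ on $\widetilde{Z}$ to the strict transform $D' \simeq C$ realizes $C$ as a curve in $\widetilde{Z}$ with $\mathcal{H}|_{D'} \simeq \omega_C(x+y)$ — wait, more carefully: the relevant comparison is between the scroll $\widetilde Z \subseteq \PP^g$ and the \emph{canonical} curve $C \subseteq \PP^{g-1}$. The point is that $\widetilde Z$ is the cone over $C\subseteq\PP^{g-1}$ resolved, so $H^0(\widetilde Z,\mathcal{H}^{\otimes q})$ surjects onto $H^0(\PP^{g-1},\mathcal{O}(q))$-image $= H^0(C,\omega_C^{\otimes q})$ for all $q\ge 0$, with the kernel again having vanishing intermediate cohomology; this is exactly the statement that a (resolved) cone has the same Betti table as its base, together with projective normality of the canonical curve $C\subseteq\PP^{g-1}$ (valid since $g\ge 3$ and, e.g., $C$ is nonhyperelliptic here — $k\ge 3$). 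Hence the two graded rings have isomorphic minimal free resolutions over $\mathcal{S}$ resp.\ over $\mathrm{Sym}\,H^0(\omega_C)$, once one checks the extra variable (the vertex coordinate) contributes trivially; concretely $K_{p,q}(\widetilde Z,\mathcal{H}) \simeq K_{p,q}(C,\omega_C)$ for all $p,q$. I expect this to be where the bookkeeping is most delicate: one must be careful that $\mathcal{S} = \mathrm{Sym}\,H^0(\widetilde Z,\mathcal{H})$ has one more variable than $\mathrm{Sym}\,H^0(C,\omega_C)$, and invoke the mapping-cone/Koszul argument that adjoining a variable not in the ideal's support leaves $K_{p,q}$ unchanged in the relevant range, or alternatively use that $C\subseteq\widetilde Z$ is a hyperplane-type section in a suitable sense so that the syzygies restrict isomorphically.

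For part (3), I would take the long exact sequence in Koszul cohomology associated to \eqref{ses-1}. Writing $\mathbb{A} := \bigoplus_q H^0(\mathcal{H}^{\otimes q})$ and $\mathbb{B} := \bigoplus_q H^0(\mathcal{H}^{\otimes(q-1)}(-\iota^*(x+y)))$, the sequence $0 \to \mathbb{B} \to \mathbb{A} \to \mathbb{M} \to 0$ gives a long exact sequence
\[
\cdots \to K_{p,1}(\mathbb{B},\mathcal{S}) \to K_{p,1}(\mathbb{A},\mathcal{S}) \to K_{p,1}(\mathbb{M},\mathcal{S}) \xrightarrow{\delta} K_{p-1,2}(\mathbb{B},\mathcal{S}) \to K_{p-1,2}(\mathbb{A},\mathcal{S}) \to \cdots
\]
(the standard long exact sequence of \cite[Cor.\ 1.30]{aprodu-nagel}, using that the three modules sit in a short exact sequence of $\mathcal S$-modules). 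Now I identify the terms: $K_{p,1}(\mathbb{A},\mathcal{S}) = K_{p,1}(\widetilde Z,\mathcal{H}) \simeq K_{p,1}(C,\omega_C)$ by part (2), $K_{p,1}(\mathbb{M},\mathcal{S}) \simeq K_{p,1}(C,\omega_C(x+y))$ by part (1), $K_{p-1,2}(\mathbb{B},\mathcal{S}) = K_{p-1,2}(\widetilde Z, -\iota^*(x+y);\mathcal{H}) =: K_{p-1,1}(\widetilde Z,-\iota^*(x+y);\mathcal{H})$ up to the degree shift built into $\mathbb{B}$ (the $q-1$ twist), and $K_{p-1,2}(\mathbb{A},\mathcal{S}) \simeq K_{p-1,2}(C,\omega_C)$. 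The only remaining point is the \emph{injectivity} of $K_{p,1}(C,\omega_C) = K_{p,1}(\mathbb{A},\mathcal S) \to K_{p,1}(\mathbb{M},\mathcal S)$, equivalently the vanishing of the incoming map $K_{p,1}(\mathbb{B},\mathcal{S}) \to K_{p,1}(\mathbb{A},\mathcal{S})$: this follows because $\mathbb{B}$, being the section module of a line bundle which is \emph{negative} on the fibres of $\iota$ (it is $\mathcal H^{\otimes(q-1)}$ twisted down by an effective divisor pulled back from $C$), has no linear syzygies in the relevant range — concretely $\mathbb{B}_0 = 0$, so $K_{p,1}(\mathbb{B},\mathcal S)$ maps to zero in $K_{p,1}(\mathbb A,\mathcal S)$ for dimension reasons, or one invokes that $H^0(\mathcal H^{\otimes(-1)}(-\iota^*(x+y))) = 0$ kills the $\bigwedge^{p+1}$-term. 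Splicing the long exact sequence at this point yields exactly the four-term exact sequence claimed. The main obstacle is part (2): making the cone-versus-base syzygy comparison precise, keeping track of the extra polynomial variable, is the step requiring the most care, and I would want to cite a clean reference (e.g., the behaviour of Betti numbers under taking cones, or \cite{aprodu-nagel}) rather than reprove it.
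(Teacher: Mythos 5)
Your proposal follows the same route as the paper: part (1) via agreement of $\mathbb{M}$ with the section ring in low degrees, part (2) via the cone/hyperplane-section comparison (the paper cites projective normality of the canonical curve together with the proof of Green's Lefschetz theorem), and part (3) via the long exact sequence in Koszul cohomology of the sequence (\ref{ses-1}), with the degree shift in $\mathbb{B}$ accounting for the twist. One sub-claim in your part (1) is false, however: $\alpha$ is \emph{not} surjective in degrees $q\geq 2$. The cokernel in degree $q$ is controlled by $H^1\bigl(\widetilde{Z},\mathcal{H}^{\otimes(q-1)}(-\iota^*(x+y))\bigr)=\bigoplus_{j=0}^{q-1}H^1\bigl(C,\omega_C^{\otimes j}(-x-y)\bigr)$, and the summands $j=0$ and $j=1$ do not vanish (e.g.\ $H^1(\omega_C(-x-y))\simeq H^0(\mathcal{O}_C(x+y))^{\vee}\neq 0$ since $C$ is non-hyperelliptic); a dimension count shows the cokernel of $\alpha$ in degree $2$ is one-dimensional, reflecting that sections of $\omega_C^{\otimes 2}(2x+2y)$ need not descend to the nodal curve $D$. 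This does not sink the argument: as you also note, $K_{p,1}$ only requires $\mathbb{M}_0=\C$, $\mathbb{M}_1=H^0(\omega_C(x+y))$ and the \emph{injection} $\mathbb{M}_2\hookrightarrow H^0(\omega_C^{\otimes 2}(2x+2y))$, which is exactly the content of the cited Lemma 1.3 of \cite{res-odd} — so you should drop the surjectivity claim rather than rely on it. Similarly, in part (3) the vanishing of the incoming term $K_{p,1}(\mathbb{B},\mathcal{S})=K_{p,0}(\widetilde{Z},-\iota^*(x+y);\mathcal{H})$ follows from $\mathbb{B}_1=H^0(\widetilde{Z},-\iota^*(x+y))=H^0(C,\mathcal{O}_C(-x-y))=0$, not merely from $\mathbb{B}_0=0$; with these two corrections your argument matches the paper's.
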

\begin{proof}
We have the isomorphism $H^0(\widetilde{Z},\mathcal{H}) \simeq H^0(\mathcal{O}_Z(1)) \twoheadrightarrow H^0(\mathcal{H}_{D'}) \simeq H^0(\mathcal{O}_D(1))$, and thus the degree one piece $\mathbb{M}_1$ is isomorphic to $H^0\left( C, \omega_C(x+y))\right)$. The first claim now follows \cite{res-odd}, proof of Lemma 1.3.

Since $C \seq \PP^{g-1}$ is projectively normal, we have surjections $H^0(\widetilde{Z}, \mathcal{H}^{\otimes q}) \simeq H^0(Z,\mathcal{H}^{\otimes q}) \twoheadrightarrow H^0(C,\omega_C^{\otimes q})$ for all $q\geq 0$. The second claim then follows from the proof of the Green--Lefschetz Theorem, \cite[Thm.\ 3.b.7]{green-koszul}.

For the final claim, we have $K_{p,0}(\widetilde{Z},-\iota^*(x+y);\mathcal{H})=0$. Taking Koszul cohomology of the short exact sequence (\ref{ses-1}) yields the exact sequence
$$0 \to K_{p,1}(\widetilde{Z},\mathcal{H}) \to \K_{p,1}(\mathbb{M},\mathcal{S}) \xrightarrow{\delta} K_{p-1,1}(\widetilde{Z},-\iota^*(x+y);\mathcal{H}) \to K_{p-1,2}(\widetilde{Z},\mathcal{H}).$$
The claim then follows from the previous statements.
\end{proof}

We have a short exact sequence
$$0 \to \mathcal{O}_D \to \nu_* \mathcal{O}_C \to \mathcal{O}_p \to 0,$$
inducing injective maps 
$$H^0(D,\omega_D^{\otimes q}) \hookrightarrow H^0(C,\omega_C^{\otimes q}(qx+qy)), \; \; q \geq 0,$$
which are isomorphisms for $q=0,1$. Since we may write
$$K_{p,1}(D, \omega_D) \simeq \text{Ker}\left( \wedge^p H^0(\omega_D) \otimes H^0(\omega_D) \middle / \wedge^{p+1}H^0(\omega_D)  \rightarrow \wedge^{p-1}H^0(\omega_D) \otimes H^0(\omega_D^{\otimes 2}) \right),$$
and likewise for $K_{p,1}(C, \omega_C(x+y))$, we have a canonical isomorphism 
\begin{align} K_{p,1}(C,\omega_C(x+y)) \simeq K_{p,1}(D,\omega_D), \end{align}
for any $p$. \medskip

Our goal is to show that the map $\delta : K_{p,1}(C,\omega_C(x+y)) \to K_{p-1,1}(\widetilde{Z},-\iota^*(x+y);\mathcal{H})$ is injective for $p=g-k+1$ under certain hypotheses, which then implies $K_{g-k+1,1}(C,\omega_C)=0$. To proceed, we need to use the Eagon--Nothcott resolution of a scroll, \cite[\S 1]{schreyer1}. Let
$$\mathcal{E}=\mathcal{O}_{\PP^1}(e_1) \oplus \ldots \oplus \mathcal{O}_{\PP^1}(e_d), \; e_i > 0 \; \text{for all $i$}$$
be a globally generated vector bundle of rank $d$ on $\PP^1$ and let $$X := j(\PP(\mathcal{E})) \seq \PP^r,$$ $r=h^0(\mathcal{O}_{\PP(\mathcal{E})}(1))-1$ be the associated ruled variety of dimension $d$ and minimal degree $r-d+1$, where $j: \PP(\mathcal{E}) \hookrightarrow \PP\left(H^0(\mathcal{O}_{\PP(\mathcal{E})}(1))\right)$ is the natural morphism. \medskip

 Let $p \in X$ be a smooth point and consider the projection $$\pi_p \; : \; \PP^r \dashrightarrow \PP^{r-1}.$$ Let $Y \seq \PP^r$ denote the cone over the image $\pi_p(X)$, with vertex at $p$. Then $X \seq Y$ and the cone $Y$ is a variety of minimal degree of dimension one higher than $X$. We may resolve $Y$ by a smooth rational normal scroll $\widetilde{Y}$, see \cite{eisenbud-harris-minimal}. We have a birational morphism $Bl_p(Y) \to \widetilde{Y}$ from the blow-up of $Y$ at $p$ to $\widetilde{Y}$. Let $\mu: \widetilde{Y} \to Y$ denote the resolution of singularities, and let $X' \seq \widetilde{Y}$ be the strict transform of $X$.  Let $\mathcal{H}$, $\mathcal{R}$ denote the class of the hyperplane and ruling of $\widetilde{Y}$. These classes span $\text{Pic}(\widetilde{Y})$.
\begin{lem} \label{class2}
We have $\mathcal{O}_{\widetilde{Y}}(X') \simeq \mathcal{O}_{\widetilde{Y}}(\mathcal{H}+\mathcal{R})$.
\end{lem}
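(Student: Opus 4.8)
The plan is to compute the class of $X'$ in $\operatorname{Pic}(\widetilde{Y}) \simeq \mathbb{Z}[\mathcal{H}] \oplus \mathbb{Z}[\mathcal{R}]$ by pinning down its two intersection numbers against a ruling $\mathcal{R}$ and against a hyperplane $\mathcal{H}$, exactly in the spirit of Lemma \ref{class1}. Write $\mathcal{O}_{\widetilde{Y}}(X') \simeq \mathcal{O}_{\widetilde{Y}}(a\mathcal{H} + b\mathcal{R})$ for integers $a,b$ to be determined.

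First I would determine $a$. The key geometric input is that $\mu: \widetilde{Y} \to Y$ is an isomorphism away from the exceptional divisor over the smooth point $p$, and on that locus $X'$ maps isomorphically onto $X \seq Y$ under $\mu$. Since $Y$ is the cone over $\pi_p(X)$ with vertex $p$ and $X$ passes through $p$ with multiplicity one (it is a smooth point of $X$, hence of $Y$), the strict transform $X'$ meets the exceptional ruling of $\widetilde{Y}$ over $p$ in exactly one point. A generic ruling $\mathcal{R}$ of $\widetilde{Y}$ pushes down to a line of the cone $Y$ through $p$; such a line meets $X$ transversally in $\pi_p(X) \cong X$ — more precisely, $X \cdot (\text{line of cone}) = 1$ since projecting $X$ from $p \in X$ is generically one-to-one onto $\pi_p(X)$, and a generic fibre of the projection $\pi_p|_X$ is a single point. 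This gives $X' \cdot \mathcal{R} = 1$, hence $a = 1$.

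Next I would determine $b$ by intersecting with $\mathcal{H}$, or more efficiently by a degree count: the restriction $\mathcal{H}|_{X'}$ is the pullback to $X' \cong X$ (via $\mu$) of $\mathcal{O}_Z(1)|_X = \mathcal{O}_X(1)$, so $\mathcal{H}^{d} \cdot X' = \deg(X) = r - d$ in $\PP^r$ (here $\dim X = d$, so $\dim \widetilde{Y} = d+1$ and this is the correct dimension for the intersection product to be a number). On the other hand, $\widetilde{Y}$ being a smooth rational normal scroll of minimal degree $r - d = \deg(Y)$, one has the standard relations $\mathcal{H}^{d+1} = \deg(Y) = r-d$, $\mathcal{H}^{d}\cdot\mathcal{R} = 1$, and $\mathcal{R}^2 = 0$ in its Chow ring. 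Expanding $\mathcal{H}^d \cdot (\mathcal{H} + b\mathcal{R}) = \mathcal{H}^{d+1} + b(\mathcal{H}^d \cdot \mathcal{R}) = (r-d) + b$, and comparing with $\deg(X) = r - d$ forces $b = 1$... but this needs care: in fact $\deg(X) = r - d + 1$ is the minimal degree of the $d$-dimensional scroll $X$ in $\PP^r$, while $\deg(Y) = (r) - (d+1) + 1 = r - d$, so the comparison gives $(r-d) + b = r - d + 1$, i.e. $b = 1$. Hence $\mathcal{O}_{\widetilde{Y}}(X') \simeq \mathcal{O}_{\widetilde{Y}}(\mathcal{H} + \mathcal{R})$.

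\textbf{Main obstacle.} The delicate point is controlling the behaviour of the strict transform $X'$ over the vertex $p$ and making the two intersection numbers rigorous on the possibly-not-yet-obviously-nice threefold-or-higher $\widetilde{Y}$; the cleanest route is to argue entirely on $\widetilde{Y}$, which is a smooth projective bundle $\PP(\mathcal{F})$ over $\PP^1$ with a completely explicit Chow ring, and to identify $\mathcal{H}|_{X'}$, $\mathcal{R}|_{X'}$ under the isomorphism $\mu|_{X'}: X' \xrightarrow{\sim} X$. The analogue of Prop.\ 2.6 of \cite[V.2]{hartshorne} used in Lemma \ref{class1} — namely the adjunction/section-class computation on a projective bundle — should again supply the identity directly once $a = 1$ is known, since a section-type subvariety of a scroll meeting each ruling once is determined by its hyperplane class, and $\mathcal{H}|_{X'} = \mathcal{O}_X(1) = (\mathcal{H}+\mathcal{R})|_{X'}$ can be checked by restricting to a ruling and to a general hyperplane of $\widetilde{Y}$. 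I expect the proof to mirror Lemma \ref{class1} almost verbatim, replacing $\PP(\mathcal{O}_C \oplus \omega_C)$ by the rational normal scroll $\widetilde{Y}$ and $\mathcal{O}_C(x+y)$ by the ruling class $\mathcal{R}$.
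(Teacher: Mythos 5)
Your overall strategy is exactly the paper's: write $\mathcal{O}_{\widetilde{Y}}(X')\simeq\mathcal{O}_{\widetilde{Y}}(a\mathcal{H}+b\mathcal{R})$, pin down $a$ by looking at a general ruling, and pin down $b$ by the degree count $\mathcal{H}^{\dim X}\cdot(a\mathcal{H}+b\mathcal{R})=ad+b=\deg(X)=\deg(\widetilde{Y})+1$. The second step is carried out correctly (including your self-correction on $\deg(X)=r-d+1$ versus $\deg(\widetilde{Y})=r-d$).

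The gap is in your determination of $a$. You argue that a generic ruling of $\widetilde{Y}$ ``pushes down to a line of the cone $Y$ through $p$'' and that $X'\cdot\mathcal{R}=1$ forces $a=1$. This only makes dimensional sense when $X$ is a curve: in general $\dim X=d$, so $\widetilde{Y}$ is a $(d+1)$-dimensional scroll whose rulings are $\PP^d$'s (not the lines through the vertex of the cone), and the intersection of the divisor $X'$ with the divisor class $\mathcal{R}$ is a cycle of dimension $d-1$, not a number. In the paper's actual application $X$ is the scroll swept out by the spans $\langle s\rangle\simeq\PP^{k-2}$ of divisors of the $g^1_k$, so $d=k-1\geq 2$ and your argument as written does not apply. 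The correct statement, which is the paper's, is that the rulings of $\PP(\mathcal{E})$ map to the rulings of $\pi_p(X)$ and hence sit inside the rulings of $\widetilde{Y}$ as linear subspaces of codimension one; therefore $X'$ meets a general ruling $\mathcal{R}\simeq\PP^d$ in a hyperplane, so $\mathcal{O}_{\widetilde{Y}}(X')|_{\mathcal{R}}\simeq\mathcal{O}_{\PP^d}(a)$ forces $a=1$. Relatedly, the identity $\mathcal{H}|_{X'}=(\mathcal{H}+\mathcal{R})|_{X'}$ proposed at the end of your note is false, since $\mathcal{R}|_{X'}$ is the (nontrivial) ruling class of $X$; no such cancellation is needed once $a$ is computed by restriction to a ruling.
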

\begin{proof}
Let $a,b$ be such that $\mathcal{O}_{\widetilde{Y}}(X') \simeq \mathcal{O}_{\widetilde{Y}}(a\mathcal{H}+b\mathcal{R})$.
Firstly, the image of the ruling on the scroll $\PP(\mathcal{E})$ gives the ruling on $\pi_p(X)$ under the map $\pi_p \circ j$, and this pulls back to the ruling $\mathcal{R}$ on $\widetilde{Y}$ under $\pi_p \circ \mu$.  We see from this that $X'$ meets a general ruling $\mathcal{R}$ of $\widetilde{Y}$ in a linear space of codimension one, and so $a=1$. We have $\deg(\widetilde{Y})=\deg(\pi_p(X))=\deg(X)-1$. Thus, if $d=\deg(\widetilde{Y})$, $d+1=\mathcal{H}^{\dim(X)} \cdot (a\mathcal{H}+b\mathcal{R})=ad+b$, and so $b=(1-a)d+1$ which gives $a=1, b=1$.
\end{proof}
By the above lemma, we have a short exact sequence
$$0 \to \mathcal{O}_{\widetilde{Y}}(-\mathcal{H}-\mathcal{R}) \to \mathcal{O}_{\widetilde{Y}} \to \mathcal{O}_{X'} \to 0.$$
Notice that, by the Leray spectral sequence applied to $\widetilde{Y} \to \PP^1$, we have $H^1( \widetilde{Y}, \mathcal{H}^{\otimes q-1}(-\mathcal{R}))=0$ for $q \geq 0$. Thus we have a short exact sequence
$$ 0 \to \bigoplus_{q \in \mathbb{Z}}H^0(\widetilde{Y}, \mathcal{H}^{\otimes q-1}(-\mathcal{R})) \to \bigoplus_{q \in \mathbb{Z}}H^0(\widetilde{Y}, \mathcal{H}^{\otimes q})  \to \bigoplus_{q \in \mathbb{Z}}H^0(X', \mathcal{H}^{\otimes q})  \to 0 $$
of $\text{Sym} \left(H^0(\widetilde{Y},\mathcal{H})\right)$ modules. This induces a long exact sequence
$$0=K_{p,0}\left(\widetilde{Y},-\mathcal{R};\mathcal{H}\right) \to K_{p,1}(\widetilde{Y},\mathcal{H}) \to K_{p,1}(X', \mathcal{H}) \xrightarrow{\Delta} K_{p-1,1}\left(\widetilde{Y},-\mathcal{R};\mathcal{H}\right) \to K_{p-1,2}(\widetilde{Y},\mathcal{H})\to \ldots$$

\begin{lem} \label{proj-scroll}
Set $f=r-d+1=\deg(X)$. Then $$\Delta:K_{f-1,1}(X',\mathcal{H}) \xrightarrow{\sim} K_{f-2,1}(\widetilde{Y}, -\mathcal{R}; \mathcal{H})$$ is an isomorphism for $p=f-1$.
\end{lem}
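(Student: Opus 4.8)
The plan is to read the isomorphism off directly from the long exact sequence in Koszul cohomology displayed immediately above the statement, feeding in the classical Betti table of the variety of minimal degree $\widetilde{Y}$. Specialising that sequence to $p=f-1$, the map $\Delta$ sits in the four-term exact sequence
$$K_{f-1,1}(\widetilde{Y},\mathcal{H})\to K_{f-1,1}(X',\mathcal{H})\xrightarrow{\ \Delta\ }K_{f-2,1}(\widetilde{Y},-\mathcal{R};\mathcal{H})\to K_{f-2,2}(\widetilde{Y},\mathcal{H}),$$
so it suffices to prove that the two outer terms vanish: the vanishing of $K_{f-1,1}(\widetilde{Y},\mathcal{H})$ forces $\Delta$ to be injective, and the vanishing of $K_{f-2,2}(\widetilde{Y},\mathcal{H})$ forces $\Delta$ to be surjective. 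Both are statements about the scroll $\widetilde{Y}$ alone and are independent of $X'$.

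To obtain these two vanishings I would first record the numerical invariants of $\widetilde{Y}$. It is a smooth rational normal scroll with $\dim\widetilde{Y}=\dim X+1=d+1$ and, as already noted in the proof of Lemma \ref{class2}, $\deg\widetilde{Y}=\deg\pi_p(X)=\deg X-1=f-1$; hence its codimension inside $\PP^r$ is $r-(d+1)=f-2$, so $\widetilde{Y}$ is again a variety of minimal degree. By the Eagon--Northcott resolution of a scroll (\cite[\S 1]{schreyer1}), the minimal free resolution of $S(\widetilde{Y})$ is $2$-linear: one has $b_{p,q}(\widetilde{Y},\mathcal{H})=0$ for every $q\ge 2$, while $b_{p,1}(\widetilde{Y},\mathcal{H})=p\binom{f-1}{p+1}$, which is nonzero precisely for $1\le p\le f-2$. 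Consequently $K_{f-1,1}(\widetilde{Y},\mathcal{H})=0$, since $f-1$ lies one step past the end of the linear strand, and $K_{f-2,2}(\widetilde{Y},\mathcal{H})=0$, since the resolution has no quadratic part at all. Substituting both vanishings into the four-term sequence above shows that $\Delta$ is an isomorphism for $p=f-1$.

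I do not expect a genuine obstacle here: the input is entirely the classical structure of the minimal free resolution of a rational normal scroll, together with the long exact sequence that has already been set up. The only point needing a little care is the numerology — one must check that the linear strand of $\widetilde{Y}$ stops at $p=f-2$, exactly one short of the index $p=f-1$ in the statement, so that both relevant entries of the scroll's Betti table have already become zero. This in turn rests on correctly identifying the degree and codimension of $\widetilde{Y}$, using $\dim\widetilde{Y}=\dim X+1$, $\deg\widetilde{Y}=\deg X-1$, and the fact that rational normal scrolls have minimal degree.
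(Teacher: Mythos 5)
Your proposal is correct and follows essentially the same route as the paper: the paper's proof likewise reads the isomorphism off the displayed long exact sequence by noting that the Eagon--Northcott resolution of the scroll $\widetilde{Y}$ (of degree $f-1$) has length $f-2$ and is $2$-linear, giving $K_{f-1,1}(\widetilde{Y},\mathcal{H})=K_{f-2,2}(\widetilde{Y},\mathcal{H})=0$. Your version merely spells out the numerology (dimension, degree, and codimension of $\widetilde{Y}$) in more detail than the paper does.
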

\begin{proof}
The Eagon--Northcott complex provides minimal resolutions for the scrolls $X'$ and $\widetilde{Y}$, see \cite{EN} and \cite{schreyer1}. These resolutions have length one less than the degree of the variety, and all matrices between terms in the resolution have linear forms as entries, with the exception of the first map, which has quadratic entries. Thus $K_{f-1,1}(\widetilde{Y},\mathcal{H})=K_{f-2,2}(\widetilde{Y},\mathcal{H})=0$.
\end{proof}

We now arrive at the main result of this section. 
\begin{thm} \label{conv-AP}
Let $D$ be the $1$-nodal, $k$-gonal curve as above, with normalization the smooth curve $C$ of genus $g$ and line bundle $B \in W^1_k(D)$ satisfying $\nu^*B \simeq A$, for $k \geq 3$.
Assume \begin{enumerate}
\item $h^0(D,B^{\otimes 2})=3$, and
\item  $b_{g+1-k,1}(D,\omega_D)=g+1-k$. 
\end{enumerate}
Then $K_{g+1-k,1}(C,\omega_C)=0$.
\end{thm}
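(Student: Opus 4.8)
The plan is to reduce the vanishing to a comparison between the syzygies of $D$ and those of a rational normal scroll, by means of the cone construction of this section. Throughout put $f:=g-k+2$, so that $f-1=g+1-k$ is the codimension of the scroll $X_{\omega_D}:=\bigcup_{E\in|B|}\mathrm{Span}(E)\seq\PP^g$ of the nodal canonical curve $D$, and $\deg X_{\omega_D}=f$.

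First I would invoke Proposition \ref{proj-map}(3) with $p=f-1$, together with the canonical isomorphism $K_{f-1,1}(C,\omega_C(x+y))\cong K_{f-1,1}(D,\omega_D)$: the theorem then becomes the injectivity of the connecting map
$$\delta\colon K_{f-1,1}(C,\omega_C(x+y))\longrightarrow K_{f-2,1}\big(\widetilde Z,-\iota^*(x+y);\mathcal H\big).$$
Next, hypothesis (1) forces $X_{\omega_D}$ to be a rational normal scroll of degree $f$ carrying its Eagon--Northcott resolution, so $\dim K_{f-1,1}(X_{\omega_D},\mathcal O(1))=f-1$; the restriction $K_{f-1,1}(X_{\omega_D},\mathcal O(1))\hookrightarrow K_{f-1,1}(D,\omega_D)$ is injective, and since by hypothesis (2) the target also has dimension $f-1$, this restriction is an isomorphism. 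Thus every element of $K_{f-1,1}(C,\omega_C(x+y))$ comes from the scroll $X_{\omega_D}$.

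Now I would carry out the cone comparison. The node $p$ is a smooth point of $X_{\omega_D}$, so I apply the construction of this section with $X:=X_{\omega_D}$: here $\pi_p(X_{\omega_D})=X_{\omega_C}$, the cone $Y=\mathrm{Cone}_p(X_{\omega_C})$ is resolved by a smooth rational normal scroll $\widetilde Y$ of degree $f-1$ with strict transform $X'\cong X_{\omega_D}$, and — because $C\seq X_{\omega_C}$ — the cone $Z=\mathrm{Cone}_p(C)$ lies in $Y$ with strict transform $\widetilde Z$ and $X'\cap\widetilde Z=D'+\iota^*(T-x-y)$. Restricting the defining sequence $0\to\mathcal O_{\widetilde Y}(-\mathcal H-\mathcal R)\to\mathcal O_{\widetilde Y}\to\mathcal O_{X'}\to0$ along $\widetilde Z\hookrightarrow\widetilde Y$, and noting that $\mathcal O_{\widetilde Y}(-\mathcal R)|_{\widetilde Z}$ is the ideal sheaf $\iota^*\mathcal O_C(-T)$, which includes into $\iota^*\mathcal O_C(-x-y)=\mathcal O_{\widetilde Z}(-\iota^*(x+y))$ since $x+y\le T\in|A|$, I obtain — via Lemmas \ref{class1} and \ref{class2} — a morphism onto the sequence $0\to\mathcal H^\vee(-\iota^*(x+y))\to\mathcal O_{\widetilde Z}\to\mathcal O_{D'}\to0$. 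Taking Koszul cohomology of the corresponding graded modules produces a commutative square
$$\begin{tikzcd}[column sep=large]
K_{f-1,1}(X',\mathcal H)\arrow[r,"\Delta","\simeq"']\arrow[d,"\mathrm{res}","\simeq"'] & K_{f-2,1}(\widetilde Y,-\mathcal R;\mathcal H)\arrow[d,"\psi"]\\
K_{f-1,1}(C,\omega_C(x+y))\arrow[r,"\delta"] & K_{f-2,1}(\widetilde Z,-\iota^*(x+y);\mathcal H)
\end{tikzcd}$$
in which $\Delta$ is the isomorphism of Lemma \ref{proj-scroll} and $\mathrm{res}$ is the isomorphism found above. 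Hence $\delta$ is injective if and only if the comparison map $\psi$ — which is the restriction from $\widetilde Y$ to $\widetilde Z$ followed by the twist $\iota^*\mathcal O_C(-T)\hookrightarrow\iota^*\mathcal O_C(-x-y)$ — is injective.

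The hard part will be this injectivity of $\psi$, and I expect it to be the main obstacle. I would analyse the two factors separately: the kernel of the restriction factor is controlled by $K_{f-2,1}$ of $\mathcal I_{\widetilde Z/\widetilde Y}(-\mathcal R)$ and the kernel of the twist factor by $K_{f-2,1}$ of a torsion sheaf on $\iota^*(T-x-y)$, and both should vanish because $f-2$ is exactly the last step of the linear strand of the scroll $\widetilde Y$ — to be verified by a Leray spectral sequence over $\PP^1$ as in the proof of Lemma \ref{proj-scroll}. (Conceptually one can see directly that $\ker\psi=\ker\delta=0$: Proposition \ref{proj-map}(3) identifies $\ker\delta$ with the syzygies of $D$ coming from the cone $Z=\mathrm{Cone}_p(C)$; by the reduction above together with Theorem \ref{EP} of Eisenbud--Popescu and Proposition \ref{AN-proj-syz}(i), every nonzero element of $K_{f-1,1}(D,\omega_D)$ has syzygy scheme equal to $X_{\omega_D}$, so a nonzero $\beta\in\ker\delta$ would force $Z\seq X_{\omega_D}$, which is impossible since a general ruling line of $Z$ through $p$ lies in no fibre $\mathrm{Span}(E)$ of the scroll. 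This route sidesteps the Koszul vanishing needed for $\psi$.) In either case $\delta$ is injective, and therefore $K_{g+1-k,1}(C,\omega_C)=\ker\delta=0$.
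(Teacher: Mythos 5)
Your architecture coincides with the paper's: reduce to injectivity of $\delta$ via Proposition \ref{proj-map}, use hypotheses (1) and (2) to identify $K_{g+1-k,1}(D,\omega_D)$ with the Eagon--Northcott space $K_{f-1,1}(X_{\omega_D},\mathcal{O}(1))$, form the square comparing the resolved cone $\widetilde Y$ over $\pi_p(X_{\omega_D})$ with $\widetilde Z$, invoke Lemma \ref{proj-scroll} for $\Delta$, and reduce everything to the injectivity of the right-hand vertical map $\psi=\alpha\circ r_{\widetilde Z}$. Up to that point the proposal matches the paper step for step (including the identification of $X'\cdot\widetilde Z$ with $D'+\iota^*(T-x-y)$). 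The gap is precisely at the step you flag as "the hard part." Your proposed mechanism for the restriction factor --- that $\ker$ is "controlled by $K_{f-2,1}$ of $\mathcal{I}_{\widetilde Z/\widetilde Y}(-\mathcal{R})$," which "should vanish because $f-2$ is the last step of the linear strand of $\widetilde Y$" --- does not work as stated: the $1$-linearity of the resolution of $\Gamma_{\widetilde Y}(-\mathcal{R})$ says nothing about the syzygies of the ideal module of $\widetilde Z$ in $\widetilde Y$, which has codimension $k-2$ and in general carries many low-degree linear syzygies. The paper's actual argument is different and does not pass through that ideal: it uses $K_{p,2}(\widetilde Y,-\mathcal{R};\mathcal{H})=0$ (from $1$-linearity) to identify $K_{p,1}(\widetilde Y,-\mathcal{R};\mathcal{H})\simeq H^0(\widetilde Y,\wedge^pM_{\mathcal{H}}\otimes(\mathcal{H}-\mathcal{R}))$ and likewise on $\widetilde Z$, checks that $H^0(\widetilde Y,\mathcal{H}-\mathcal{R})\hookrightarrow H^0(\widetilde Z,\mathcal{H}-\iota^*A)$ via multiplication by a general ruling section, and concludes by the snake lemma; the twist factor $\alpha$ is handled exactly as you suggest, by an inclusion of kernels.

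Your parenthetical alternative is a genuinely different and attractive idea --- a nonzero class in $\ker\delta$ extends to the cone $Z=\mathrm{Cone}_p(C)$, while by Eisenbud--Popescu its syzygy scheme is exactly $X_{\omega_D}$, forcing $Z\seq X_{\omega_D}$ --- but as written it also has a hole: the justification that $Z\not\seq X_{\omega_D}$ ("a general ruling line of $Z$ through $p$ lies in no fibre $\mathrm{Span}(E)$") only rules out the line lying in a fibre, whereas a line in a scroll may instead be a degree-one section of the ruling, so an additional argument (e.g.\ bounding the family of such sections through the node $p$, or a degree/dimension count) is required. So: same route as the paper for the main line, with the decisive injectivity of $r_{\widetilde Z}$ left unproved and the proposed substitute argument not obviously repairable in the form given.
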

\begin{proof}
By Proposition \ref{proj-map}, it is equivalent to show that the boundary map
$$\delta \; : \; K_{g+1-k,1}(C,\omega_C(x+y)) \to K_{g-k,1}(\widetilde{Z},-\iota^*(x+y);\mathcal{H})$$
is injective. Consider the scroll $$X=\bigcup_{s \in |B|} \langle s \rangle \seq \PP^{g}$$ induced by the given $g^1_k$ on $D$, \cite[\S 2]{schreyer1}. Then $X$ has degree $f=g+2-k$ and, further, is smooth since $h^0(D,B^{\otimes 2})=3$, \cite[\S 4]{lin-syz}. As above, let $Y$ denote the cone over $\pi_p(X)$ with vertex at $p$, where $p \in D$ is the node and
$$\pi_p \; : \; \PP^g \dashrightarrow \PP^{g-1}$$
is the projection away from $p$. Further, let $\mu \; : \widetilde{Y} \to Y$ denote the resolution of singularities, and let $X' \seq \widetilde{Y}$ denote the strict transform of $X$.\medskip

Let $Z \seq \PP^g$ denote the cone over $C \simeq \pi_p(D)$ with vertex at $p$, and let $\widetilde{\nu} \; : \; \widetilde{Z} \to Z$ be the desingularization. We have a natural diagram
$$\begin{tikzcd}
X' \arrow[r, hook] & \widetilde{Y}\\
C \simeq D' \arrow[u] \arrow[r, hook] & \widetilde{Z} \arrow[u] 
\end{tikzcd}$$
relating the strict transforms $D'$, $X'$ and the desingularizations $\widetilde{Y}$, $\widetilde{Z}$ of the cones over the projections. Pulling back the class of the ruling $\mathcal{R}$ on $\widetilde{Y}$ to $\widetilde{Z}$ yields $\iota^*A$, where $$\iota : \widetilde{Z} \simeq \PP(\mathcal{O}_C \oplus \omega_C) \to C$$ is the projection, as above. 
Let $T$ is the unique element of $|A|$ passing through $x$ and $y$. We have a commutative diagram of short exact sequences
$$\begin{tikzcd}[column sep=1.1em]
0 \arrow[r] & \bigoplus_{q \in \mathbb{Z}}H^0(\widetilde{Y}, \mathcal{H}^{\otimes q-1}(-\mathcal{R})) \arrow[r] \arrow[d, "\alpha \circ r_{\widetilde{Z}}"] & \bigoplus_{q \in \mathbb{Z}} H^0(\widetilde{Y}, \mathcal{H}^{\otimes q})  \arrow[r] \arrow[d, "r_{\widetilde{Z}}"]  & \bigoplus_{q \in \mathbb{Z}}H^0(X', \mathcal{H}^{\otimes q})  \arrow[r] \arrow[d, dashed] & 0 \\
0 \arrow[r] & \bigoplus_{q \in \mathbb{Z}} H^0\left(\widetilde{Z}, \mathcal{H}^{\otimes (q-1)}(-\iota^*(x+y)) \right) \arrow[r] & \bigoplus_{q \in \mathbb{Z}} H^0\left(\widetilde{Z}, \mathcal{H}^{\otimes q}\right) \arrow[r] & \mathbb{M} \arrow[r] & 0
\end{tikzcd}$$
where the maps $r_{\widetilde{Z}}$ are induced by pull-back, where $\alpha$ is induced by multiplication by $\iota^* T(-x-y)$, where the lower row is the short exact sequence  (\ref{ses-1}) defined earlier and where the dashed arrow is the induced map. Note that, by Lemmas \ref{class1} and \ref{class2}, the pull-back of the divisor $X'$ to $\widetilde{Z}$  is $D'\cup \iota^{-1}(T(-x-y))$.

 Upon taking Koszul cohomology, we get the commutative diagram
$$\begin{tikzcd}
K_{g+1-k,1}(X',\mathcal{H}) \arrow[r, "\Delta "] \arrow[d, "r_{D'} "]  & K_{g-k,1}(\widetilde{Y},-\mathcal{R}; \mathcal{H}) \arrow[d, "r_{\widetilde{Z}} "]  \\
K_{g+1-k,1}(C,\omega_C(x+y)) \arrow[dr, bend right, "\delta"]  & \arrow[d, "\alpha"] K_{g-k,1}(\widetilde{Z},-\iota^*A; \mathcal{H})\\
 & K_{g-k,1}(\widetilde{Z},-\iota^*(x+y);\mathcal{H})
\end{tikzcd}$$
where $\Delta$ is an isomorphism by Proposition \ref{proj-scroll}. It thus suffices to show that $r_{D'}$ is an isomorphism, whereas $\alpha$ and $r_{\widetilde{Z}}$ are injective. \medskip

To see that $r_{D'}$ is an isomorphism, first note that, since we have birational morphisms $Bl_p X \to X' \to X$, we have isomorphisms $H^0(Bl_pX,q\mathcal{H}) \simeq H^0(X',q\mathcal{H}) \simeq H^0(X,q\mathcal{H})$ for all $q$ and so $$K_{p,1}(X',\mathcal{H}) \simeq K_{p,1}(X,\mathcal{H}).$$ Next, $r_{D'}$  is injective by the same proof as \cite{lin-syz}, Lemma 4.4 (note $D' \seq Bl_p X$). From the Eagon--Northcott resolution, we see $b_{g+1-k,1}(X,\mathcal{H})=f-1=g+1-k$. Hence the claim follows from the assumption $b_{g+1-k,1}(D,\omega_D)=g+1-k$, since $K_{p,1}(D,\omega_D) \simeq K_{p,1}(C,\omega_C(x+y))$ for all $p$.\medskip

The injectivity of $\alpha$ follows from the commutative diagram
$$\begin{tikzcd} [column sep=1.1em]
\wedge^{g-k}H^0(\mathcal{H}) \otimes H^0(\mathcal{H}-\iota^*A) \arrow[r, "\gamma_1"] \arrow[d, hook] & \arrow[d, hook] \wedge^{g-k-1}H^0(\mathcal{H}) \otimes H^0(2\mathcal{H}-\iota^*A) \\
\wedge^{g-k}H^0(\mathcal{H}) \otimes H^0(\mathcal{H}-\iota^*(x+y)) \arrow[r, "\gamma_2"]  & \wedge^{g-k-1}H^0(\mathcal{H}) \otimes H^0(2\mathcal{H}-\iota^*(x+y)),
\end{tikzcd}$$
since $\alpha$ is the induced map $\text{Ker}(\gamma_1) \hookrightarrow \text{Ker}(\gamma_2)$.\medskip

It remains to show $r_{\widetilde{Z}}$ is injective. To begin, the commutative diagram
$$\begin{tikzcd}
H^0(\widetilde{Y},\mathcal{H}-\mathcal{R}) \arrow[r, hook, "\otimes s"] \ar[d, "r_{\widetilde{Z}}"] &  \arrow[d, "\simeq"]H^0(\widetilde{Y}, \mathcal{H}) \\
H^0(\widetilde{Z},\mathcal{H}-\iota^*A) \arrow[r, hook, "\otimes s_{\widetilde{Z}}"] & H^0(\widetilde{Z}, \mathcal{H})
\end{tikzcd}$$
for general $s \in |\mathcal{R}|$ shows that the pull-back map $r_{\widetilde{Z}} : H^0(\widetilde{Y},\mathcal{H}-\mathcal{R}) \hookrightarrow H^0(\widetilde{Z},\mathcal{H}-\iota^*A)$ is injective. Next, the Eagon--Northcott resolution of $$\Gamma_{\widetilde{Y}}(\mathcal{H}; -\mathcal{R}) :=\bigoplus_{q \in \mathbb{Z}} H^0(\widetilde{Y},q\mathcal{H}-\mathcal{R}),$$
as described in \cite[Pg.\ 111]{schreyer1}, is $1$-linear, i.e.\ $K_{p,q}(\widetilde{Y}, -\mathcal{R}; \mathcal{H})=0$ unless $q=1$ for all $p \geq 0$. In particular, $K_{p,2}(\widetilde{Y}, -\mathcal{R}; \mathcal{H})=0$ for all $p \geq 0$. Letting $M_{\mathcal{H}}$ be the kernel bundle
$$0 \to M_{\mathcal{H}} \to H^0(\mathcal{H}) \otimes \mathcal{O}_{\widetilde{Y}} \to \mathcal{H}_{\widetilde{Y}} \to 0,$$
then this implies $H^1(\widetilde{Y}, \wedge^p M_{\mathcal{H}} (\mathcal{H}-\mathcal{R}))=0$ for all $p \geq 0$, by the kernel bundle description of Koszul cohomology, \cite{aprodu-nagel} (note that we have already observed $H^1(\widetilde{Y},\mathcal{H}-\mathcal{R})=0$). For all $p \geq 0$ we have the short exact sequence
$$0 \to \bigwedge^p M_{\mathcal{H}}\otimes (\mathcal{H}-\mathcal{R}) \to \bigwedge^p H^0(\widetilde{Y},\mathcal{H})\otimes (\mathcal{H}-\mathcal{R}) \to \bigwedge^{p-1}M_{\mathcal{H}} \otimes (2\mathcal{H}-\mathcal{R}) \to 0$$
and canonical isomorphisms 
\begin{align*}
K_{p,1}(\widetilde{Y}, -\mathcal{R} ; \mathcal{H}) & \simeq H^0(\widetilde{Y}, \bigwedge^p M_{\mathcal{H}} \otimes (\mathcal{H}-\mathcal{R})) \\
K_{p,1}(\widetilde{Z}, -\iota^*A ; \mathcal{H}) & \simeq H^0(\widetilde{Z}, \bigwedge^p M_{\mathcal{H}} \otimes (\mathcal{H}-\iota^*A)).
\end{align*}
There is a commutative diagram 
$$\begin{tikzcd}[column sep=1.1em]
0 \ar[r] &  \ar[r]  \ar[d, "r_{\widetilde{Z}}"] H^0(\widetilde{Y}, \wedge^p M_{\mathcal{H}} (\mathcal{H}-\mathcal{R})) & \ar[r] \ar[d, hook] \wedge^p H^0(\mathcal{H})\otimes H^0(\widetilde{Y}, \mathcal{H}-\mathcal{R}) & \ar[r] \ar[d] H^0(\widetilde{Y}, \wedge^{p-1}M_{\mathcal{H}} (2\mathcal{H}-\mathcal{R})) & 0 \\
0 \ar[r] &  \ar[r] H^0(\widetilde{Z}, \wedge^p {M_{\mathcal{H}}} (\mathcal{H}-\iota^*A)) & \ar[r] \wedge^p H^0(\mathcal{H})\otimes H^0(\widetilde{Z},\mathcal{H}-\iota^*A) & H^0(\widetilde{Z}, \wedge^{p-1}M_{\mathcal{H}} (2\mathcal{H}-\iota^*A))
\end{tikzcd}$$
with exact rows. The claim now follows from the snake lemma.

\end{proof}

We end this section by proving a result on Green's Conjecture for curves of even genus and maximal gonality.
\begin{thm}
Let $C$ be a smooth curve of genus $g=2n$ and gonality $k=n+1$. Suppose for $x, y \in C$ general, there is at most one $A \in W^1_{n+1}(C)$ such that $A(-x-y)$ is effective. Further assume $h^0(C,A^{\otimes 2})=3$. Then $C$ satisfies Green's Conjecture.
\end{thm}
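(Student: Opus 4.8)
The plan is to deduce this from Theorem~\ref{conv-AP} (the converse to Aprodu's Projection Theorem) by passing to a suitable one-nodal degeneration. Given the smooth curve $C$ of genus $g = 2n$ and gonality $k = n+1$, pick $A \in W^1_{n+1}(C)$ and a general divisor $T \in |A|$, and choose two distinct points $x, y \in T$; let $D$ be the one-nodal curve of genus $g+1 = 2n+1$ obtained by identifying $x$ and $y$. As recalled in the text, there is a base-point-free $B \in W^1_{n+1}(D)$ with $\nu^* B \simeq A$, so $D$ has gonality $\leq n+1$; since $g+1 = 2n+1$ and the gonality of a genus-$(2n+1)$ curve is at most $n+2$, the curve $D$ in fact has non-maximal (submaximal) gonality $k = n+1$. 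The hypotheses of Theorem~\ref{conv-AP} are: (1) $h^0(D, B^{\otimes 2}) = 3$, and (2) $b_{g+1-k,1}(D,\omega_D) = g+1-k$, i.e. $b_{n,1}(D,\omega_D) = n$. If I can verify these, Theorem~\ref{conv-AP} gives $K_{g+1-k,1}(C,\omega_C) = K_{n+1,1}(C,\omega_C) = 0$, which is precisely Green's Conjecture for $C$ (since $\mathrm{Cliff}(C) = k-2 = n-1$, so Green predicts $K_{g-k+1,1}(C,\omega_C) = 0$).

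For hypothesis (1): one computes $h^0(D, B^{\otimes 2})$ via the normalization sequence $0 \to \mathcal{O}_D \to \nu_*\mathcal{O}_C \to \mathcal{O}_p \to 0$ twisted by $B^{\otimes 2}$, which relates $H^0(D, B^{\otimes 2})$ to sections of $A^{\otimes 2}$ on $C$ vanishing appropriately at $x+y$ — using $\nu^* B \simeq A$ and that $x,y \in T \in |A|$. The assumption $h^0(C, A^{\otimes 2}) = 3$ together with the base-point-freeness of $B$ should force $h^0(D, B^{\otimes 2}) = 3$; this is essentially the content of the parenthetical remark after Conjecture~\ref{schrconj1} that reducedness of $W^1_k$ is equivalent to $h^0((\cdot)^{\otimes 2}) = 3$, applied on $D$. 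For hypothesis (2): this is the key input and is exactly Schreyer's Conjecture (Conjecture~\ref{schrconj1}) for the curve $D$ of genus $2n+1$ and submaximal gonality $n+1$. But a genus-$(2n+1)$ curve of gonality $n+1$ is a curve of \emph{odd genus and maximal gonality} — and Corollary~\ref{g=2k-1} (via Theorem~\ref{first-step}, itself building on Hirschowitz--Ramanan and the results of \cite{generic-secant}) establishes $b_{g'-i-1,1} = b_{(2i+1)-i-1,1} = b_{i,1}$ for such curves, which with $i = n$ is precisely $b_{n,1}(D,\omega_D) = n$ — provided $W^1_{n+1}(D)$ is a single reduced point and $B$ is the unique line bundle of degree $\leq g' - 1$ computing the Clifford index of $D$.

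So the real work is in verifying that $D$ satisfies the hypotheses of the odd-genus-maximal-gonality case of Schreyer's Conjecture, i.e. that $W^1_{n+1}(D) = \{B\}$ is a reduced point and $B$ is the unique Clifford-index-minimizing bundle on $D$. The hypothesis that for general $x, y \in C$ there is \emph{at most one} $A \in W^1_{n+1}(C)$ with $A(-x-y)$ effective is precisely what controls $W^1_{n+1}(D)$: a $g^1_{n+1}$ on $D$ pulls back to a $g^1_{n+1}$ on $C$ containing $x + y$ in (a member of) its linear series, so uniqueness of such an $A$ on $C$ translates to $|W^1_{n+1}(D)| = 1$; reducedness of $W^1_{n+1}(D)$ then follows from $h^0(D, B^{\otimes 2}) = 3$ (hypothesis (1)). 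That $B$ uniquely computes $\mathrm{Cliff}(D)$ should follow because any competing line bundle on $D$ of low Clifford index would, upon pullback/comparison, produce a competing low-Clifford-index bundle or an extra pencil on $C$, contradicting either $h^0(C, A^{\otimes 2}) = 3$ or the "at most one $A$" hypothesis; one also needs $\mathrm{Cliff}(D) = n - 1$, which holds since $D$ carries $B$ with Clifford index $n-1$ and cannot do better (being of genus $2n+1$). I expect this last point — pinning down the Clifford index of $D$ and the uniqueness of its computing bundle from the mild genericity hypothesis on $C$ — to be the main obstacle, as it requires a careful Brill--Noether analysis of the one-nodal curve $D$ in the spirit of \cite{arbarello-cornalba}; the rest is assembling Theorem~\ref{conv-AP}, Corollary~\ref{g=2k-1}, and the normalization sequence.
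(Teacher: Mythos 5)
Your overall strategy coincides with the paper's: glue $x$ and $y$ to form the $1$-nodal curve $D$ of genus $2n+1$ carrying $B$ with $\nu^*B\simeq A$, verify the two hypotheses of Theorem \ref{conv-AP}, and conclude. However, your verification of hypothesis (2) has a genuine gap. The results you invoke --- Theorem \ref{first-step} and Corollary \ref{g=2k-1} --- concern line bundles of degree $2g$ and $\geq 2g+i+1$ respectively, not the canonical bundle, so they say nothing about $b_{n,1}(D,\omega_D)$. The correct input is the Hirschowitz--Ramanan divisor identity on $\overline{\mathcal{M}}_{2n+1}$ evaluated at the \emph{stable} curve $[D]$ (in the paper, \cite[Remark 3.2]{lin-syz}); note in passing that $D$ has \emph{submaximal} gonality $n+1$ (maximal gonality in genus $2n+1$ is $n+2$), contrary to your later assertion, which is exactly why $[D]$ lies on the Hurwitz divisor. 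To extract $b_{n,1}(D,\omega_D)=n$ from the local multiplicity of that divisor at the singular curve $[D]$, one must control the entire compactified locus $\overline{W}^1_{n+1}(D)\subset\overline{\mathrm{Jac}}(D)$, i.e.\ all rank-one torsion-free sheaves of degree $n+1$ with two sections, not just line bundles. Your translation (``a $g^1_{n+1}$ on $D$ pulls back to an $A$ with $A(-x-y)$ effective'') handles only the locally free ones; the non-locally-free ones are of the form $\nu_*T'$ with $T'\in W^1_n(C)$ and must be excluded using that $C$ has gonality $n+1>n$. This step is missing. On the other hand, the point you flag as ``the main obstacle'' --- showing $B$ uniquely computes $\mathrm{Cliff}(D)$ --- is not actually required: the divisor-class computation needs only that $\overline{W}^1_{n+1}(D)=\{B\}$ is a reduced point.

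A second, smaller gap: you take $x,y$ to be two points of a general $T\in|A|$, but the theorem's hypothesis constrains only \emph{general} pairs $x,y\in C$, which need not lie in a common divisor of $|A|$. You need the dichotomy with which the paper's proof opens: if for general $x,y$ there is \emph{no} $A\in W^1_{n+1}(C)$ with $A(-x-y)$ effective, then $\dim W^1_{n+1}(C)=0$ and Green's conjecture follows directly from Aprodu's theorem \cite{aprodu-remarks}; otherwise a general pair $x,y$ does lie on a divisor of exactly one pencil $A$, and it is for such a genuinely general pair that one forms $D$, so that the uniqueness hypothesis really applies to the node being created. Without this reduction the ``at most one $A$'' hypothesis gives no control over the special pairs you have chosen.
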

\begin{proof}
If, for $x, y \in C$ general, there is no $A \in W^1_{n+1}(C)$ with $A(-x-y)$ effective, then $\dim W^1_{n+1}(C)=0$ and the result follows from \cite{aprodu-remarks}. So we may assume that there is precisely one $A \in W^1_{n+1}(C)$ such that $A(-x-y)$ is effective. Let $D$ be the $1$-nodal curve of genus $2n+1$ obtained by identifying $x$ and $y$ and let $\nu: C \to D$ be the normalization. There is a line bundle $B$ on $D$ of degree $n+1$ and with $h^0(D,B)=2$ such that $\nu^*B\simeq A$. Further, $h^0(C,A^{\otimes 2})=3$ implies that $h^0(D,B^{\otimes 2})=3$ (we have $h^0(D,B^{\otimes 2})\geq 3$ by the base-point free pencil trick). Note further that if $T \in \overline{\text{Jac}}(D)$ is a rank one, torsion free sheaf of degree $n+1$ with $h^0(D,T) \geq 2$, then we must have $T \simeq B$. Indeed, such a $T$ must be locally free, or else $T \simeq \nu_* T'$ for $T' \in W^1_n(C)$, contradicting that $C$ has gonality $n$, and then $T$ must be $B$ from the assumption that there is at most one $A \in W^1_{n+1}(C)$ such that $A(-x-y)$ is effective. By \cite[Remark 3.2]{lin-syz}, $b_{n,1}(D,\omega_D)=n$. From Theorem \ref{conv-AP} we deduce that $C$ satisfies Green's Conjecture.
\end{proof}

\section{Green's Conjecture for Elliptic Covers}
In this section we prove Green's conjecture for general elliptic covers. We first fix notation. Let $E$ denote a smooth elliptic curve, with polarization $L=\mathcal{O}_E(p)$, for $p \in E$ a fixed point. We denote by $\mathcal{M}_{g}(E,d_1)$ the moduli space of stable maps $f: C \to E$, with $C$ smooth of genus $g \geq 2$ and $\deg(f^*L)=d_1 \geq 1$, and likewise let $\mathcal{M}_{g}(\PP^1,d_2)$ denote the moduli space of degree $d_2 \geq 1$, genus $g$ stable maps to $\PP^1$.  We lastly write
$$\mathcal{M}_{g}(E \times \PP^1,d_1,d_2)$$
for the Deligne--Mumford stack of stable maps $f: C \to E \times \PP^1$ with $\deg(f^*L)=d_1, \deg f^* \mathcal{O}_{\PP^1}(1)=d_2$. 

\begin{lem}
Fix $g \geq 2$. The stacks $\mathcal{M}_{g}(E,d_1)$ respectively $\mathcal{M}_{g}(\PP^1,d_2)$ are smooth of dimensions $2g-2$ resp.\ $2g-2+2d_2$.
\end{lem}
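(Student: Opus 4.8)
The plan is to extract smoothness and the dimension from the standard deformation theory of stable maps. Since $g\geq 2$, every smooth genus $g$ curve is stable, so $\mathcal{M}_g(E,d_1)$ and $\mathcal{M}_g(\PP^1,d_2)$ are open substacks of the Kontsevich stacks $\overline{\mathcal{M}}_g(E,d_1)$ and $\overline{\mathcal{M}}_g(\PP^1,d_2)$. Write $X$ for either target (a smooth projective curve) and $\mathcal{M}_g(X,d)$ for the corresponding stack, and fix a point $[f\colon C\to X]$ with $C$ smooth of genus $g$. Because $C$ is smooth, the deformations and obstructions of the pair $(C,f)$, with $C$ allowed to move, are governed by $\mathrm{Ext}^\bullet_C(\mathcal{F}^\bullet,\mathcal{O}_C)$, where $\mathcal{F}^\bullet=[\,f^*\Omega_X\to\Omega_C\,]$ sits in degrees $-1,0$. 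Applying $\mathrm{RHom}_C(-,\mathcal{O}_C)$ to the distinguished triangle $\Omega_C\to\mathcal{F}^\bullet\to f^*\Omega_X[1]\xrightarrow{+1}$ and passing to hypercohomology gives the exact sequence
$$0\to H^0(f^*T_X)\to \mathrm{Ext}^1(\mathcal{F}^\bullet,\mathcal{O}_C)\to H^1(T_C)\xrightarrow{\,df\,} H^1(f^*T_X)\to \mathrm{Ext}^2(\mathcal{F}^\bullet,\mathcal{O}_C)\to 0,$$
where all cohomology is on $C$, we used $H^0(C,T_C)=H^2(C,T_C)=0$, and $\mathrm{Ext}^1$ resp.\ $\mathrm{Ext}^2$ are the tangent resp.\ obstruction spaces of the stack at $[f]$.

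The crux is the vanishing of $\mathrm{Ext}^2(\mathcal{F}^\bullet,\mathcal{O}_C)=\mathrm{coker}\big(H^1(C,T_C)\xrightarrow{df}H^1(C,f^*T_X)\big)$, i.e.\ surjectivity of $df$ on $H^1$. By Serre duality this map is dual to the map $H^0(C,f^*\Omega_X\otimes\omega_C)\to H^0(C,\omega_C^{\otimes 2})$ induced by $df^\vee\otimes\mathrm{id}_{\omega_C}$, where $df^\vee\colon f^*\Omega_X\to\Omega_C=\omega_C$ is the codifferential; so it suffices to prove this dual map is injective. As $d\geq 1$ the map $f$ is nonconstant, hence $df^\vee$ is generically an isomorphism and therefore a nonzero morphism of line bundles on the integral curve $C$, hence injective as a map of sheaves; tensoring with the line bundle $\omega_C$ preserves injectivity, and an injection of sheaves is injective on global sections. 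Thus $\mathrm{Ext}^2=0$ at every point of each stack, and both stacks are smooth.

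For the dimension, smoothness and the exact sequence above give $\dim_{[f]}\mathcal{M}_g(X,d)=\dim\mathrm{Ext}^1(\mathcal{F}^\bullet,\mathcal{O}_C)=h^0(C,f^*T_X)-h^1(C,f^*T_X)+h^1(C,T_C)=\chi(C,f^*T_X)+h^1(C,T_C)$, using $h^0(C,T_C)=0$. By Riemann--Roch $\chi(C,f^*T_X)=\deg(f^*T_X)+1-g$ and $h^1(C,T_C)=3g-3$, whence $\dim_{[f]}\mathcal{M}_g(X,d)=\deg(f^*T_X)+2g-2$. For $X=E$ we have $T_E\cong\mathcal{O}_E$, so $\deg(f^*T_E)=0$ and the dimension is $2g-2$; for $X=\PP^1$ we have $T_{\PP^1}\cong\mathcal{O}_{\PP^1}(2)$, so $\deg(f^*T_{\PP^1})=2d_2$ and the dimension is $2g-2+2d_2$. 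The main obstacle is precisely the obstruction vanishing: the naive sufficient condition $H^1(C,f^*T_X)=0$ already fails for $X=E$ (there $f^*T_E=\mathcal{O}_C$ has $h^1=g$), so one must genuinely use that deformations of $C$ itself surject onto $H^1(C,f^*T_X)$ via the nonzero line-bundle map $df$.
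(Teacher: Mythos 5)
Your argument is correct and is essentially the same deformation theory the paper uses: both reduce to the observation that for a nonconstant map of smooth curves the differential $df\colon T_C\to f^*T_X$ is injective with torsion cokernel. The paper packages this slightly more economically via the normal sheaf $N_f=\mathrm{coker}(df)$, which is supported on the ramification locus so that $h^1(C,N_f)=0$ gives unobstructedness and $h^0(C,N_f)=\chi(f^*T_X)-\chi(T_C)$ gives the dimension; your $\mathrm{Ext}^2(\mathcal{F}^\bullet,\mathcal{O}_C)$ is exactly $H^1(C,N_f)$, and your Serre-duality step is an equivalent way of seeing it vanish.
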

\begin{proof}
For any smooth, projective variety $X$ and non-constant stable map $f: C \to X$ from a smooth curve $C$, the deformation theory of $f$ is determined by the normal sheaf $N_f$, \cite[\S 4]{bogomolov-hassett-tschinkel} which fits into the short exact sequence
$$0 \to T_C \xrightarrow{df} f^*T_X \to N_f \to 0.$$
In particular, when $\dim X=1$, $N_f$ is supported on the ramification locus of $f$, and so $h^1(C,N_f)=0$. Thus $\mathcal{M}_{g}(E,d_1)$ and $\mathcal{M}_{g}(\PP^1,d_2)$ are smooth, with dimension at a point $f: C \to X$ given by $h^0(C,N_f)$, where $X \in \{E,\PP^1 \}$. For $X=E$, we have $$h^0(C,N_f)=\chi(\mathcal{O}_C)-\chi(\omega_C^*)=2g-2,$$
whereas for $X=\PP^1$,
$$h^0(C,N_f)=\chi(f^*\mathcal{O}_{\PP^1}(2))-\chi(\omega_C^*)=2g-2+2d_2.$$
\end{proof}

\begin{prop} \label{dim-ct-1}
With notation as above, let $[f: C \to E \times \PP^1] \in \mathcal{M}_{g}(E \times \PP^1,d_1, d_2)$, for $d_1, d_2 \geq 1$ be a point such that $f$ is birational to its image. Then each component of $\mathcal{M}_{g}(E \times \PP^1,d_1, d_2)$ containing $[f]$ is generically reduced and has dimension $g-1+2d_2$.
\end{prop}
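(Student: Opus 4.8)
The plan is to run the deformation theory of stable maps, exactly as in the previous lemma. Write $g_1:=\mathrm{pr}_E\circ f\colon C\to E$ and $g_2:=\mathrm{pr}_{\PP^1}\circ f\colon C\to\PP^1$, of degrees $d_1$ and $d_2$; the deformations of $f$ are controlled by the normal sheaf $N_f:=\operatorname{coker}\bigl(df\colon T_C\to f^*T_{E\times\PP^1}\bigr)$, with tangent space $H^0(C,N_f)$ and obstructions lying in $H^1(C,N_f)$. The first step is to compute the expected dimension. Since $T_E\cong\mathcal{O}_E$ one has a splitting $f^*T_{E\times\PP^1}\cong\mathcal{O}_C\oplus g_2^*\mathcal{O}_{\PP^1}(2)$, so by Riemann--Roch
$$\chi(N_f)=\chi\bigl(f^*T_{E\times\PP^1}\bigr)-\chi(T_C)=\bigl((1-g)+(2d_2+1-g)\bigr)-(3-3g)=g-1+2d_2.$$
Because the local dimension of $\mathcal{M}_g(E\times\PP^1,d_1,d_2)$ at $[f]$ is always at least $h^0(N_f)-h^1(N_f)=\chi(N_f)$, every component through $[f]$ automatically has dimension $\ge g-1+2d_2$.

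For the matching upper bound, together with generic reducedness, it suffices to exhibit in each component $Z\ni[f]$ a point at which $H^1$ of the normal sheaf vanishes: there the stack is smooth, so $Z$ is reduced at that point and of dimension $h^0=\chi=g-1+2d_2$. I would proceed as follows. ``$f$ is birational onto its image'' is an open condition holding at $[f]$, hence holds on a dense open subset of the irreducible component $Z$; inside that subset the locus of \emph{immersions} (maps $f'$ with $df'$ nowhere vanishing, equivalently whose image in $E\times\PP^1$ is nodal) is again open, and I claim it is nonempty. For a general such $[f']\in Z$, the map $df'$ is then a subbundle inclusion, so $N_{f'}$ is a \emph{line bundle} of degree $\deg\bigl(f'^*T_{E\times\PP^1}\bigr)-\deg T_C=2d_2-(2-2g)=2g-2+2d_2$; since $d_2\ge1$, the dual bundle $\omega_C\otimes N_{f'}^{-1}$ has negative degree $-2d_2$, whence $H^1(C,N_{f'})=0$ by Serre duality and the argument closes.

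The step carrying the real content is the claim that the general member of $Z$ is an immersion --- equivalently, that a general deformation of $f$, with smooth genus-$g$ source and fixed numerical class, has image with only nodes. I would prove this by a versality argument for the singularities: a non-nodal singularity of the image $Y'$ is unibranch, and one shows that the linear system $|\mathcal{O}_{E\times\PP^1}(Y')|=|\mathrm{pr}_E^*N'\otimes\mathrm{pr}_{\PP^1}^*\mathcal{O}(d_1)|$, together with the one-parameter variation of $N'\in\mathrm{Pic}^{d_2}(E)$ forced on $Y'$, induces the versal deformation of that singularity; this rests on a vanishing $H^1\bigl(E\times\PP^1,\mathcal{I}_\Sigma\otimes\mathcal{O}(Y')\bigr)=0$ for the finite scheme $\Sigma$ cut out at the singular points, which in turn follows from $-K_{E\times\PP^1}=2(E\times\{\mathrm{pt}\})$ being nef. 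Then $Y'$ may be deformed, keeping its class and geometric genus, to a curve with strictly fewer non-nodal points, contradicting the generality of $[f']$. I expect this Bertini-type statement --- that curves of a fixed bidegree on the ruled surface $E\times\PP^1$ are in general nodal --- to be the main obstacle.

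An alternative that sidesteps $N_f$: over the locus of smooth irreducible sources, $\mathcal{M}_g(E\times\PP^1,d_1,d_2)$ is the fibre product $\mathcal{M}_g(E,d_1)\times_{\mathcal{M}_g}\mathcal{M}_g(\PP^1,d_2)$ of the two smooth stacks of the previous lemma, of dimensions $2g-2$ and $2g-2+2d_2$ over the $(3g-3)$-dimensional $\mathcal{M}_g$; so generic reducedness and the equality $\dim Z=(2g-2)+(2g-2+2d_2)-(3g-3)=g-1+2d_2$ follow once the two tangent maps into $H^1(C,T_C)$ have spanning images at a general point of $Z$. Dualizing, the latter amounts to $\eta\cdot H^0(C,\omega_C)\cap H^0\bigl(C,\omega_C^{\otimes2}(-R_{g_2})\bigr)=0$ inside $H^0(C,\omega_C^{\otimes2})$, where $R_{g_2}$ is the ramification divisor of $g_2$ and $\eta=g_1^*(\text{a nonzero }1\text{-form on }E)\in H^0(C,\omega_C)$ has divisor the ramification divisor $R_{g_1}$ of $g_1$; this is automatic once $R_{g_1}\cap R_{g_2}=\varnothing$, i.e.\ once $f$ is an immersion (and also, for degree reasons, whenever $d_2\ge g$). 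Either way the crux is the genericity statement above.
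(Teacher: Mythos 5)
Your overall strategy (lower bound from $\chi(N_f)=g-1+2d_2$, upper bound plus generic reducedness from $H^1(N_{f'})=0$ at a general point of each component) matches the paper's, but there is a genuine gap at exactly the step you flag as ``the main obstacle'': you never prove that the general member of a component is an immersion, and your Serre-duality vanishing $H^1(C,N_{f'})=0$ is only valid once you know $N_{f'}$ is a line bundle of degree $2g-2+2d_2$. The sketched versality/Severi argument on $E\times\PP^1$ (deforming away non-nodal singularities while fixing the class and the geometric genus) is not carried out, is delicate on a non-rational ruled surface, and in any case is circular in flavor: to control the equisingular deformations you would again need a cohomological statement about the normal sheaf. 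Your alternative fibre-product route reduces to the same unproved immersion claim, so neither version closes.

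The paper avoids this entirely via \cite[Lemma 1.4]{arbarello-cornalba}. Write $0\to K_h\to N_h\to N'_h\to 0$ with $K_h$ torsion (supported on the ramification of $h$) and $N'_h$ a line bundle; since torsion sheaves on a curve have no $H^1$, one always has $h^1(N_h)=h^1(N'_h)$, so there is no need to first kill $K_h$. The Arbarello--Cornalba lemma gives $h^0(N'_h)\geq\dim I\geq g-1+2d_2\geq g+1$ at a general point $[h]$ of a component $I$, while any line bundle $L$ on a curve with $h^1(L)\neq 0$ injects into $\omega_C$ and hence has $h^0(L)\leq g$; therefore $h^1(N'_h)=0$, the component is smooth at $[h]$, and a second application of the same lemma then \emph{deduces} (rather than assumes) that $h$ is unramified with $N_h$ a line bundle, giving $\dim I=\chi(N_h)=g-1+2d_2$. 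Note also that a naive degree count cannot substitute for this: $\deg N'_h=2g-2+2d_2-\mathrm{length}(K_h)$, so Serre duality alone gives $h^1(N'_h)=0$ only if $\mathrm{length}(K_h)<2d_2$, which is not known a priori. To repair your proof you should either import this lemma or find another argument producing at least $g+1$ sections of the torsion-free quotient of the normal sheaf at a general point of the component.
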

\begin{proof}
We follow \cite{arbarello-cornalba}. First of all, observe that each component $I$ of $\mathcal{M}_{g}(E \times \PP^1,d_1, d_2)$ containing $[f]$ has dimension at least
$$\dim \mathcal{M}_{g}(E,d_1)+\mathcal{M}_{g}(\PP^1,d_2)-\dim \mathcal{M}_g=g-1+2d_2.$$
Let $[h: C' \to E \times \PP^1] \in I$ be a general point. The normal sheaf $N_h$ of the morphism $h$ fits into an exact sequence
$$0 \to K_h \to N_h \to N'_h \to 0,$$
of sheaves on $C'$, where $K_{h}$ is (non-canonically) isomorphic to $\mathcal{O}_Z$, where $Z$ is the ramification locus of $h$, and where $N'_h$ is a line bundle. 
By \cite[Lemma 1.4]{arbarello-cornalba}, $$h^0(N'_h) \geq g-1+2d_2 \geq g+1.$$
For any line bundle $L$ on $C'$, if $h^1(L) \neq 0$ then $|L|$ is a sublinear system of $|\omega_{C'}|$ and hence $h^0(L) \leq g$. Thus $h^1(N'_h)=0$ and hence
$h^1(N_h)=0$, so that $I$ is smooth at $[h]$. Applying \cite[Lemma 1.4]{arbarello-cornalba} again, we may now conclude that $K_h=0$, $h$ is unramified and $N_h$ is locally free of degree $$ \deg(h^*T_{E \times \PP^1})+2g-2=2g-2+2d_2.$$
Thus $\dim I=h^0(N_h)=\chi(N_h)=g-1+2d_2$ as required.
\end{proof}
We denote by $\mathcal{H}^E_g(d_1) \seq \mathcal{M}_g(E,d_1)$ the open locus of \emph{primitive} covers with simple ramification. The space $\mathcal{H}^E_g(d_1)$ is then nonempty and irreducible for $g \geq 2$, by a result of Gabai--Kazez, \cite{gabai-kazez} (see also \cite{bujokas}). 

For a smooth curve $C$, let $W^1_k(C)$ be the Brill--Noether variety of line bundles $L$ of degree $k$ with at least two sections, and let $G^1_d(C)$ be the variety of $g^1_d$'s, i.e.\ pairs of a line bundle $A \in W^1_k(C)$ together with a base-point free linear system $V \seq H^0(A)$ of dimension two, up to the natural $\text{PGL}(2)$ action, \cite{ACGH1}. If $C$ has gonality $k$, then elements of $W^1_k(C)=G^1_k(C)$ are called \emph{minimal pencils}.
\begin{cor} \label{BN-estimates}
Let $[f: C \to E] \in \mathcal{H}^E_g(d_1)$ be a general point, let $(A,V) \in G^1_{d_2}(C)$ and suppose $A$ is not isomorphic to $f^*B$ for some $B \in \text{Pic}(E)$ with $\deg(B) \geq 2$. Then $$\dim_{[A]}G^1_{d_2}(C)=2d_2-g-2.$$ In particular, if $2d_1 \leq \lfloor \frac{g+3}{2} \rfloor $, then $C$ has gonality $2d_1$ and if $2d_1 > \lfloor \frac{g+3}{2} \rfloor $ then $C$ has gonality $\lfloor \frac{g+3}{2} \rfloor$. Moreover, if $2d_1 < \lfloor \frac{g+3}{2} \rfloor $, all minimal pencils of $C$ are of the form $f^*B$, $B \in \text{Pic}^2(E)$.
\end{cor}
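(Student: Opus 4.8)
The plan is to translate the datum of a pencil on $C$ that is \emph{not} pulled back from $E$ into a stable map $C\to E\times\PP^1$ birational onto its image, and then to read off $\dim_{[A]}G^1_{d_2}(C)$ from the dimension count of Proposition \ref{dim-ct-1}, correcting by $\dim\mathcal{M}_g(E,d_1)=2g-2$ (the choice of cover) and $\dim\text{PGL}(2)=3$ (the choice of a basis of the pencil); throughout I would follow the circle of ideas of \cite{arbarello-cornalba}. I expect the main obstacle to be the first step below, the birationality statement: this is precisely the point where the hypothesis ``$A$ not pulled back'' is forced into the argument, and where one must pin down the meaning of ``primitive'' for covers of $E$.

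\emph{Step 1 (birationality onto the image).} Let $[f\colon C\to E]$ be a general point of $\mathcal{H}^E_g(d_1)$ and $(A,V)\in G^1_{d_2}(C)$ with $A$ not of the form $f^*B$ for any $B$ of degree $\geq 2$; let $\phi_A\colon C\to\PP^1$ be the degree-$d_2$ morphism defined by $V$. I would show that $h:=(f,\phi_A)\colon C\to E\times\PP^1$ is birational onto its image, so that $[h]\in\mathcal{M}_g(E\times\PP^1,d_1,d_2)$ satisfies the hypothesis of Proposition \ref{dim-ct-1}. If $h$ were not birational onto its image it would factor as $C\xrightarrow{\pi}C''\xrightarrow{h''}E\times\PP^1$ with $\deg\pi\geq 2$, forcing both $f$ and $\phi_A$ to factor through $\pi$; but for $[f]$ general the primitive cover $f$ admits no factorization $f=f'\circ\pi$ with $\deg\pi\geq 2$ and $\deg f'\geq 2$ (cf.\ \cite{arbarello-cornalba}), so $\deg f'=1$, $\pi$ is equivalent to $f$ itself, and $\phi_A$ factors through $f$, say $\phi_A=\psi\circ f$. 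Then $A=f^*\bigl(\psi^*\mathcal{O}_{\PP^1}(1)\bigr)$; since an elliptic curve admits no degree-one map to $\PP^1$, $\psi^*\mathcal{O}_{\PP^1}(1)$ has degree $\geq 2$, contradicting the hypothesis. (Conversely, for $[f]$ general a pencil on $C$ is pulled back from $E$ exactly when its underlying line bundle lies in $f^*\text{Pic}(E)$, so the hypothesis says precisely that $[A]$ lies in the open subset $U\seq G^1_{d_2}(C)$ of pencils not pulled back from $E$, the pulled-back ones forming the image of a proper family over $\text{Pic}(E)$, hence a closed subset.)

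\emph{Step 2 (the dimension count).} Restrict to the irreducible component $U_0\ni[A]$ of $U$. The degree-$d_2$ morphisms $\phi\colon C\to\PP^1$ whose associated pencil lies in $U_0$ form a $\text{PGL}(2)$-torsor over $U_0$, of dimension $\dim U_0+3$, and $\phi\mapsto[(f,\phi)]$ embeds this torsor into the fibre over $[f]$ of $\Phi\colon\mathcal{M}_g(E\times\PP^1,d_1,d_2)\to\mathcal{M}_g(E,d_1)$ (induced by the first projection), landing inside the locus of maps birational onto their image (Step 1); as this torsor is irreducible, its image lies in a single component $\mathcal N$ of $\mathcal{M}_g(E\times\PP^1,d_1,d_2)$. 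Since $\Phi(\mathcal N)$ contains the general point $[f]$ of the irreducible space $\mathcal{H}^E_g(d_1)$ (\cite{gabai-kazez}), $\mathcal N$ dominates $\mathcal{H}^E_g(d_1)$, and $\dim\mathcal N=g-1+2d_2$ by Proposition \ref{dim-ct-1}; hence $\Phi^{-1}([f])\cap\mathcal N$ has dimension at most $(g-1+2d_2)-(2g-2)=2d_2-g+1$, so $\dim U_0+3\leq 2d_2-g+1$, i.e.\ $\dim_{[A]}G^1_{d_2}(C)\leq 2d_2-g-2$. The reverse inequality $\dim_{[A]}G^1_{d_2}(C)\geq 2d_2-g-2$ is the classical dimension theorem for Brill--Noether loci on an arbitrary smooth curve, \cite{ACGH1}, which yields the displayed equality. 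In particular, when $2d_2-g-2<0$ the curve $C$ carries no $g^1_{d_2}$ other than pull-backs $f^*B$ with $\deg B\geq 2$.

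\emph{Step 3 (gonality).} A pencil pulled back from $E$ has degree $d_1\deg B\geq 2d_1$, with the minimum $2d_1$ attained exactly on $f^*\text{Pic}^2(E)\simeq E$, while by Step 2 a pencil of degree $k$ not pulled back can exist only when $2k-g-2\geq 0$, i.e.\ $k\geq\lfloor\frac{g+3}{2}\rfloor$, and for such $k$ one does exist by the Brill--Noether existence theorem. Thus, if $2d_1\leq\lfloor\frac{g+3}{2}\rfloor$ the pencils of least degree are the pulled-back ones of degree $2d_1$, so $C$ has gonality $2d_1$; if in addition $2d_1<\lfloor\frac{g+3}{2}\rfloor$ then no non-pulled-back pencil reaches degree $2d_1$, whence every minimal pencil is of the form $f^*B$, $B\in\text{Pic}^2(E)$; and if $2d_1>\lfloor\frac{g+3}{2}\rfloor$ then every pulled-back pencil has degree $>\lfloor\frac{g+3}{2}\rfloor$ while a non-pulled-back pencil of degree $\lfloor\frac{g+3}{2}\rfloor$ exists, so $C$ has gonality $\lfloor\frac{g+3}{2}\rfloor$. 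The delicate points in the above — namely that a general primitive cover has no nontrivial factorization (Step 1) and that the relevant component $\mathcal N$ both has the expected dimension and dominates $\mathcal{H}^E_g(d_1)$ (Step 2) — are where the care is needed; the remaining $\text{PGL}(2)$-bookkeeping is routine.
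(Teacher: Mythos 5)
Your proposal is correct and follows essentially the same route as the paper: form $h=(f,\phi_A)$, use primitivity and simple ramification to rule out a nontrivial factorization and hence get birationality onto the image, then apply Proposition \ref{dim-ct-1} and compute the fibre of the forgetful map to $\mathcal{H}^E_g(d_1)$, subtracting $3$ for the $\text{PGL}(2)$-action. The only cosmetic difference is that you obtain the lower bound $\dim_{[A]}G^1_{d_2}(C)\geq 2d_2-g-2$ from the classical Brill--Noether dimension estimate, whereas the paper reads off the equality directly from the dimension of the general fibre; both are fine.
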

\begin{proof}
Let $(A,V) \in G^1_{d_2}(C)$ be base-point free and suppose $A$ is not the pull-back of a line bundle from $E$ of degree at least two. Then $V$ induces a map $C \to \PP^1$ of degree $d_2$ and we let $[h: C \to E \times \PP^1] \in \mathcal{M}_{g}(E \times \PP^1,d_1, d_2)$ be the product of this map with $f$. We claim that $h$ is birational to its image. Indeed, otherwise let $D=h(C)$ be the image of $h$. Since $f$ is simply ramified and primitive projection to the first factor must induce an isomorphism $pr_1: D \xrightarrow{\sim} E$. But we must then have that $A$ is the pull-back of a line bundle $B$ from $E$ with $\deg(B) \geq 2$, which is a contradiction. So $h$ is birational. Since we are assuming $[f]$ is general, each component of $\mathcal{M}_{g}(E \times \PP^1,d_1, d_2)$ containing $[h]$ dominates $\mathcal{H}^E_g(d_1)$ under the natural forgetful morphism and all fibres have dimension $$g-1+2d_2-(2g-2),$$
by Proposition \ref{dim-ct-1}. After subtracting $3=\dim \text{PGL}(2)$, we see that each component of $G^1_{d_2}(C)$ containing $[A]$ has dimension $2d_2-g-2$, as required. In particular, we must have $d_2 \geq \frac{g+2}{2}$. In fact, since $d_2$ is an integer we have $d_2 \geq \lfloor \frac{g+3}{2} \rfloor $ (which is the gonality of a general curve of genus $g$). The remaining statements follow immediately. 
\end{proof}

We can now prove the main result of this section.
\begin{thm}
Let $[f: C \to E] \in \mathcal{H}^E_g(d_1)$ be a general point. Then Green's Conjecture holds for $C$.
\end{thm}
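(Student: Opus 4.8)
The plan is to argue by induction on the genus $g$, with the inductive step built from the Aprodu Projection Theorem (Theorem \ref{aprodu-projection}) together with a degeneration of covers in the spirit of \cite{arbarello-cornalba}. Write $k := 2d_1$. By Corollary \ref{BN-estimates} the general $[f : C \to E] \in \mathcal{H}^E_g(d_1)$ has gonality $\min\bigl(2d_1,\lfloor\tfrac{g+3}{2}\rfloor\bigr)$, so I would treat two ranges of $g$ separately.

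For $g \le 4d_1 - 4$ the gonality of $C$ equals $\lfloor\tfrac{g+3}{2}\rfloor$, the generic value in genus $g$, and no induction is needed. If $g$ is odd, $C$ has maximal gonality and Green's Conjecture follows from \cite{hirsch} combined with \cite{V2}. If $g = 2m$ is even, then every minimal pencil of $C$ has degree $m+1 < 2d_1$ and so is not pulled back from $E$; by Corollary \ref{BN-estimates}, $\dim W^1_{m+1}(C) = 2(m+1) - g - 2 = 0$, hence the linear growth condition reduces to the single inequality $\dim W^1_{m+1}(C) \le 0$, and Green's Conjecture follows from Aprodu's Theorem \cite{aprodu-remarks}.

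For $g \ge 4d_1 - 3$ the general cover has gonality exactly $k = 2d_1$, and I would prove $K_{g-k+1,1}(C,\omega_C) = 0$ by induction on $g$. The base case $g = 4d_1 - 3$ has odd genus and maximal gonality $2d_1$, so this vanishing is again provided by \cite{hirsch} and \cite{V2}. For the inductive step, fix $g \ge 4d_1 - 2$; since $g-1 \ge 4d_1 - 3$, the inductive hypothesis says $K_{g-k,1}(C_0,\omega_{C_0}) = 0$ for the general $[f_0 : C_0 \to E] \in \mathcal{H}^E_{g-1}(d_1)$. I would choose distinct $x, y \in C_0$ with $f_0(x) = f_0(y) =: q$ (possible since $d_1 \ge 2$); then $x + y$ lies in the fibre $f_0^{-1}(q+s) \in |f_0^*\mathcal{O}_E(q+s)|$ of a minimal pencil, and $f_0$ descends to a degree-$d_1$ map $\bar f_0 : D \to E$ from the one-nodal genus-$g$ curve $D := C_0/(x \sim y)$. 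The Aprodu Projection Theorem then yields $K_{g-k+1,1}(D,\omega_D) = 0$.

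To conclude, I would show that $[\bar f_0 : D \to E]$ lies in the closure of $\mathcal{H}^E_g(d_1)$ inside $\overline{\mathcal{M}}_g(E,d_1)$ — smoothing the node of $\bar f_0$ and checking, as in \cite{arbarello-cornalba}, that the smoothing stays primitive and simply ramified, and that as $(C_0,x,y)$ varies over general data such $D$ fill up a component of the boundary divisor of one-nodal covers. Since $\mathcal{H}^E_g(d_1)$ is irreducible (Gabai--Kazez \cite{gabai-kazez}), semicontinuity of Koszul cohomology then propagates $K_{g-k+1,1}(-,\omega) = 0$ from $D$ to the general smooth cover $[f : C \to E]$, finishing the induction. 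A final Brill--Noether dimension count over the irreducible Hurwitz space (as in the proof of Corollary \ref{BN-estimates}, and as carried out for $d_1 = 3$ in \cite{AF-covers}) shows $\mathrm{Cliff}(C) = k - 2$, so the vanishing $K_{g-k+1,1}(C,\omega_C) = 0$ is exactly Green's Conjecture for $C$. The main obstacle will be this boundary/smoothing step: realizing the one-nodal cover $\bar f_0 : D \to E$ as a genuine limit of smooth primitive simply-ramified covers of genus $g$ and controlling the normalization of the generic such limit; everything else is either quoted (the Aprodu Projection Theorem, \cite{hirsch}+\cite{V2}, \cite{aprodu-remarks}, irreducibility of the Hurwitz space, semicontinuity of Koszul cohomology) or immediate from the results already established above.
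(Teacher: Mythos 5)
Your proposal is correct and follows essentially the same route as the paper: the maximal-gonality range is dispatched via Hirschowitz--Ramanan, Voisin and Aprodu using the Brill--Noether estimates of Corollary \ref{BN-estimates}, and the range $g \geq 4d_1-3$ is handled by induction on $g$ using the Aprodu Projection Theorem applied to the one-nodal cover obtained by identifying two points of a fibre, followed by semicontinuity over the irreducible Hurwitz space. The smoothing/boundary step you flag as the main obstacle is indeed the point the paper treats most briefly, so your explicit attention to it is well placed rather than a deviation.
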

\begin{proof}
We may assume $d_1 \geq 2$ as Green's conjecture holds for all elliptic curves. If $2d_1 \geq \frac{g+3}{2}$, then by Corollary \ref{BN-estimates}, $C$ has maximal gonality $k:=\lfloor \frac{g+3}{2} \rfloor$ and, further, $\dim G^1_{k+n}(C) \leq n$ for $n \leq g+2-2k$. Thus the statement follows from a theorem of Hirschowitz--Ramanan \cite{hirsch} combined with Voisin's Theorem, \cite{V1}, \cite{V2} (in the odd genus case) and Aprodu, \cite[Theorem 2]{aprodu-remarks} (in even genus).

So it suffices to prove that, for fixed $d_1 \geq 2$ and all $g \geq 4d_1-3$, the general point $[f: C \to E] \in \mathcal{H}^E_g(d_1)$ satisfies $b_{g-2d_1+1,1}(C,\omega_C)=0$, which further forces $\text{gon}(C)=2d_1=\text{Cliff}(C)+2$.
We will prove this vanishing by induction, with the base case $g=4d_1-3$ holding by the above. So suppose $[f: C \to E] \in \mathcal{H}^E_g(d_1)$ is a general point, with $g \geq 4d_1-3$, and suppose $b_{g-2d_1+1,1}(C,\omega_C)=0$. Let $x, y \in C$ be distinct points such that $f(x)=f(y)$ and let $D$ be the curve of genus $g+1$ obtained by identifying $x$ and $y$. Let $g: D \to E$ be the unique morphism factoring through $f: C \to E$. By the Aprodu Projection Theorem \ref{aprodu-projection}, $b_{g+2-2d_1,1}(D,\omega_D)=0$, which implies $b_{g+2-2d_1,1}(D',\omega_D')=0$ for a general point $[f': D' \to E] \in \mathcal{H}^E_{g+1}(d_1)$ by semicontinuity of Koszul cohomology.
\end{proof}


\begin{thebibliography}{aaaaaa}
\bibitem[Ap1]{aprodu-higher} M. Aprodu, {\em{On the vanishing of higher syzygies of curves}}, Mathematische Zeitschrift \textbf{241} (2002), 1-15.
\bibitem[Ap2]{aprodu-remarks} M. Aprodu, {\em{Remarks on syzygies of $d$-gonal curves}}, Math. Res. Lett \textbf{12} (2005), 387-400.
\bibitem[AF]{aprodu-farkas} M. Aprodu and G. Farkas, {\em{Green's Conjecture for smooth curves on arbitrary K3 surfaces}}, Comp. Math \textbf{147} (2011), 839-851.
\bibitem[AN]{aprodu-nagel} M. Aprodu and  J. Nagel, {\em{Koszul cohomology and algebraic geometry}}, University Lecture Series \textbf{52}, American Mathematical Society, Providence, RI (2010).

\bibitem[AC]{arbarello-cornalba} E. Arbarello, M. Cornalba, {\em{Footnotes to a paper of Beniamino Segre}}, Mathematische Annalen \textbf{256} (1981), 341-362.
\bibitem[ACGH]{ACGH1} E. Arbarello, M. Cornalba, P.A. Griffiths and J. Harris, {\em{Geometry of algebraic curves, Volume I}}, Grundlehren der Mathematischen Wissenschaften \textbf{267}, Springer, Heidelberg (1985).
\bibitem[AF]{aprodu-farkas} M. Aprodu and G. Farkas, {\em{Green's conjecture for curves on arbitrary K3 surfaces}}, Compositio Math.\ \textbf{147} (2011), 839-851.
\bibitem[AF2]{AF-covers} M. Aprodu and G. Farkas, {\em{Green's Conjecture for general covers}}, Compact moduli spaces and vector bundles, Contemp. math. \textbf{564} (2012), 211-226.
\bibitem[AFPRW]{AFPRW} M. Aprodu, G. Farkas, S. Papadima, C. Raicu and J. Weyman, {\em{Koszul modules and Green's conjecture}},  arXiv:1810.11635.



\bibitem[B]{bujokas} G. Bujokas, {\em{The Hurwitz Space of Covers of an Elliptic Curve $E$ and the Severi Variety of Curves in $E \times \PP^1$}}, arXiv:1409.0927.
\bibitem[BHT]{bogomolov-hassett-tschinkel} F. Bogomolov, B. Hassett and Y. Tschinkel, {\em{Constructing Rational Curves on K3 Surfaces}}, Duke Math.\ J.\ \textbf{157} (2011), 535-550.
\bibitem[BS]{bopp-schreyer} C. Bopp and F.-O.Schreyer. {\em{A version of Green's conjecture in positive characteristic}}, arXiv:1803.10481.
\bibitem[C1]{copp2} M. Coppens, {\em{On G.\ Martens' characterization of smooth plane curves}}, Bull.\ London Math.\ Soc.\ \textbf{20} (1988), 217-220.
\bibitem[C2]{coppens} M. Coppens, {\em{The existence of k-gonal curves possessing exactly two linear systems $g^1_k$}}, Math. Annalen \textbf{307} (1997), 291-297.

\bibitem[CCK]{choi-kang-kwak} Y. Choi, P-L Kang and S. Kwak, {\em{Higher linear syzygies of inner projections}}, J. Algebra \textbf{305} (2006), 859-876.





\bibitem[EN]{EN} J. A. Eagon and D. G. Northcott, {\em{Ideals defined by matrices and a certain complex associated with them}}, Proc.\ R.\ Soc.\ Lond.\ A \textbf{269} (1962), 188-204.
\bibitem[EH]{eisenbud-harris-minimal} D. Eisenbud and J. Harris, {\em{On varieties of minimal degree (A centennial account)}}, Proc. Sympos. Pure Math. \textbf{46} (1987), 3--13.

\bibitem[EP]{eisenbud-popescu} D. Eisenbud and S. Popescu, {\em{Syzygy ideals for determinantal ideals and the syzygetic Castelnuovo lemma}}, Commutative Algebra, Algebraic Geometry, and Computational Methods \textbf{1996}, 247-258.

\bibitem[EL]{ein-lazarsfeld} L. Ein and R. Lazarsfeld, {\em{The gonality conjecture on syzygies of algebraic curves of
              large degree}}, Publ. Math. Inst. Hautes \'Etudes Sci. \textbf{122} (2015), 301-313.
\bibitem[FHL]{FHL} W. Fulton, J. Harris and R. Lazarsfeld, {\em{Excess linear series on an algebraic curve}}, Proc. American Math. Soc. \textbf{92} (1984), 320-322.

\bibitem[FK1]{generic-secant} G. Farkas and M. Kemeny, {\em{The generic Green--Lazarsfeld Secant Conjecture}}, Inventiones Math. \textbf{203} (2016), 265-301.
\bibitem[FK2]{lin-syz}  G. Farkas and M. Kemeny, {\em{Linear syzygies for curves of prescribed gonality}}. Advances in Math.\, \emph{to appear}.
\bibitem[FK3]{res-odd} G. Farkas and M. Kemeny, {\em{The Resolution of Paracanonical Curves of Odd Genus}}, Geometry and Topology, \emph{to appear}.

 

\bibitem[G1]{green-canonical} M. Green, {\em{The canonical ring of a variety of general type}}, Duke Math. J. \textbf{49} (1982), 1087-1113.

\bibitem[G2]{green-koszul} M. Green, {\em{Koszul cohomology and the cohomology of projective varieties}}, Journal of Differential Geometry \textbf{19} (1984), 125-171.
\bibitem[G3]{green-koszul-II} M. Green, {\em{Koszul cohomology and the cohomology of projective varieties. II. }}, Journal of Differential Geometry \textbf{20} (1984), 279-289.
 
 \bibitem[GK]{gabai-kazez} D. Gabai and W. H. Kazez, {\em{The classification of maps of surfaces}}, Inventiones Math.\ \textbf{90} (1987), 219-242.

\bibitem[Ha]{hartshorne} R. Hartshorne, {\em{Algebraic Geometry}}, Graduate Texts in Mathematics \textbf{52}, Springer-Verlag, New York, 1977.



\bibitem[HR]{hirsch} A. Hirschowitz and S. Ramanan, {\em{New evidence for Green's Conjecture on syzygies of canonical curves}}, Annales Scientifiques de l'\'Ecole Normale Sup\'erieure \textbf{31} (1998), 145-152.



\bibitem[K1]{betti-multiple} M. Kemeny, {\em{Betti Numbers of Curves and Multiple-Point Loci}}, preprint.
\bibitem[L]{lascoux} A. Lascous, {\em{Syzygies des vari/'eti/'es d/'eterminales}}, Adv. Math.\ \textbf{30}(1978), 202-237. 










\bibitem[S]{segre} B. Segre, {\em{Sui moduli delle curve poligonali, e sopra un complemento al teorema di esistenza di Reimann}}, Math. Annalen \textbf{100} (1928), 537-551.
\bibitem[Sch1]{schreyer1} F.-O. Schreyer, {\em{Syzygies of canonical curves and special linear series}}, Math. Ann. \textbf{275} (1986), 105-137.
\bibitem[Sch2]{schreyer-topics} F.-O. Schreyer, {\em{Some topics in computational algebraic geometry}}, In: Advances in algebra and geometry (Hyderabad, 2001), Hindustan Book Agency (2003), 263-278.
\bibitem[SSW]{SSW} J. Schicho, F.-O. Schreyer and M. Weimann, {\em{Computational aspects of gonal maps and radical parametrization of curves}}, Appl.\ Algebra Engrg.\ Comm.\ Comput.\ \textbf{24} (2013), 313-341.
\bibitem[V1]{V1} C. Voisin, {\em{Green's generic syzygy conjecture for curves of even genus lying on a $K3$ surface}}, Journal of European Math. Society \textbf{4} (2002), 363-404.
\bibitem[V2]{V2} C. Voisin, {\em{Green's canonical syzygy conjecture for generic curves of odd genus}}, Compositio Mathematica \textbf{141} (2005), 1163-1190.
\bibitem[W]{weyman-book} J. Weyman, {\em{Cohomology of vector bundles and syzygies}}, Cambridge University Press, 2003.


\end{thebibliography}
\end{document}